\documentclass[a4paper]{amsart}
\usepackage{hyperref}
\usepackage[left=0.75in, right=0.75in, top= 1in, bottom= 1.75in ]{geometry}
\usepackage{amsmath,amssymb,amsthm,amsfonts}
\setlength{\parindent}{0pt}
\setlength{\parskip}{0.45em}
\theoremstyle{definition}
\newtheorem{defn}{Definition}[section]

\newtheorem{thm}[defn]{Theorem}
\newtheorem{fact}[defn]{Fact}
\newtheorem{corr}[defn]{Corollary}
\newtheorem{lem}[defn]{Lemma}
\newtheorem{question}[defn]{Question}
\newtheorem{remark}[defn]{Remark}
\newtheorem{observation}[defn]{Observation}

\setlength{\textheight}{1.1\textheight}
\title[Combinatorial Properties and  $\mathsf{DC_{\kappa}}$ in symmetric extensions]{Combinatorial Properties and  Dependent Choice in symmetric extensions based on L\'{e}vy Collapse}
\author{Amitayu Banerjee}
\address{Alfr\'ed R\'enyi Institute of Mathematics, Reáltanoda utca 13-15, Budapest-1053, Hungary}
\email{banerjee.amitayu@gmail.com}
\date{}
\makeatletter
\@namedef{subjclassname@2020}{\textup{2020} Mathematics Subject Classification}
\makeatother
\subjclass[2020]{03E25, 03E55, 03E35.}
\keywords{Dependent Choice, symmetric extensions, L\'{e}vy Collapse, large cardinals, infinitary Chang conjecture}
\begin{document}
\begin{abstract}
We work with symmetric extensions based on L\'{e}vy Collapse and extend a few results of Apter, Cody, and Koepke. We prove a conjecture of Dimitriou from her Ph.D. thesis.
We also observe that if $V$ is a model of $\mathsf{ZFC}$, then $\mathsf{DC_{<\kappa}}$ can be preserved in the symmetric extension of $V$ in terms of symmetric system $\langle \mathbb{P},\mathcal{G},\mathcal{F}\rangle$, if $\mathbb{P}$ is $\kappa$-distributive and $\mathcal{F}$ is $\kappa$-complete. Further we observe that if $\delta<\kappa$ and $V$ is a model of $\mathsf{ZF+DC_{\delta}}$, then $\mathsf{DC_{\delta}}$ can be preserved in the symmetric extension of $V$ in terms of symmetric system $\langle \mathbb{P},\mathcal{G},\mathcal{F}\rangle$, if $\mathbb{P}$ is ($\delta+1$)-strategically closed and $\mathcal{F}$ is $\kappa$-complete.
\end{abstract}
\maketitle
\section{introduction}
Grigorieff \cite{Gri1975} proved that symmetric extensions in terms of {\em symmetric system} $\langle \mathbb{P},\mathcal{G},\mathcal{F}\rangle$\footnote{ $\langle\mathbb{P},\mathcal{G},\mathcal{F}\rangle$ is a {\em symmetric system} if $\mathbb{P}$ is a forcing notion, $\mathcal{G}$ is  a group of automorphisms of $\mathbb{P}$, and $\mathcal{F}$ is a normal filter of subgroups over $\mathcal{G}$.} are intermediate models of the form HOD$(V[a])^{V[G]}$ as $a$ varies over $V[G]$. Apter, Henle, Cody, and Koepke constructed several models of $\mathsf{ZF}$ in terms of hereditarily definable sets based on L\'{e}vy Collapse (cf. \cite{AC2013, Apt1983a, AK2006, Apt2005, AH1991}). The purpose of this note is to translate the arguments of a few of those  choiceless model constructions to symmetric extensions in terms of symmetric system $\langle \mathbb{P},\mathcal{G},\mathcal{F}\rangle$ and extend a few published results. In particular, we prove the following. 

\begin{enumerate}
    \item We prove the failure of $\mathsf{AC_{\kappa}}$ (Every family of $\kappa$ non-empty sets admits a choice function) in the symmetric extension of \cite[\textbf{Theorem 4.1}]{Kar2019}. Moreover, we study an argument to preserve the supercompactness of $\kappa$ in the symmetric model following the methods of \cite{Ina2013}.
    
    \item We reduce the large cardinal assumption of \cite[\textbf{Theorem 2}]{AC2013} and \cite[\textbf{Theorem 3}]{AC2013}.
    \item We observe an {\em infinitary Chang conjecture} in the choiceless model constructed in  \cite[\textbf{Theorem 11}]{AK2006}. Moreover, we prove that $\aleph_{\omega_{1}}$ is an almost Ramsey cardinal in the model.

    \item Fix an arbitrary $n_{0}\in \omega$. We observe that if $\langle S_{k}: 1\leq k <\omega\rangle$ is a sequence of stationary sets such that $S_{k}\subseteq\aleph_{n_{0}+2(k+1)}$ for every $1\leq k <\omega$, then $\langle S_{k}: 1\leq k <\omega\rangle$ is {\em mutually stationary} in the choiceless model constructed in \cite[\textbf{Theorem 1}]{Apt1983a}. We also observe an alternating sequence of measurable and non-measurable cardinals in the model. Moreover, we observe that $\aleph_{\omega}$ is an almost Ramsey cardinal in the model.
\end{enumerate}

Secondly, we prove a conjecture of Dimitriou related to the failure of Dependent Choice--or $\mathsf{DC}$--in a symmetric extension based on finite support products of collapsing functions, from \cite{Dim2011}. We also study new lemmas related to preserving $\mathsf{DC}$ in symmetric extensions inspired by \cite[\textbf{Lemma 1}]{Kar2014}. In particular, we observe the following.
\begin{enumerate}
    \item Let $V$ be a model of $\mathsf{ZFC}$. If $\mathbb{P}$ is $\kappa$-distributive and $\mathcal{F}$ is $\kappa$-complete, then $\mathsf{DC_{<\kappa}}$ is preserved in the symmetric extension of $V$ with respect to the symmetric system $\langle \mathbb{P},\mathcal{G},\mathcal{F}\rangle$.
    \item Let $\delta<\kappa$ and $V$ be a model of $\mathsf{ZF+DC_{\delta}}$ where the Axiom of Choice ($\mathsf{AC}$) might fail. If $\mathbb{P}$ is ($\delta+1$)-strategically closed and $\mathcal{F}$ is $\kappa$-complete, then $\mathsf{DC_{\delta}}$ is preserved in the symmetric extension of $V$ with respect to the symmetric system $\langle \mathbb{P},\mathcal{G},\mathcal{F}\rangle$.
\end{enumerate}

\subsection{Preserving Dependent Choice} Karagila \cite[\textbf{Lemma 1}]{Kar2014} proved that if $\mathbb{P}$ is $\kappa$-closed and $\mathcal{F}$ is $\kappa$-complete then $\mathsf{DC_{<\kappa}}$ is preserved in the symmetric extension in terms of symmetric system $\langle \mathbb{P},\mathcal{G},\mathcal{F}\rangle$.
We observe that `$\mathbb{P}$ is $\kappa$-closed' can be replaced by `$\mathbb{P}$ is $\kappa$-distributive' in \cite[\textbf{Lemma 1}]{Kar2014}. This slightly generalize \cite[\textbf{Lemma 1}]{Kar2014}, since there are $\kappa$-strategically closed forcing notions which are not $\kappa$-closed\footnote{As for an example, the forcing notion $\mathbb{P}(\kappa)$ which adds a {\em non-reflecting stationary set of cofinality $\omega$ ordinals in $\kappa$}, is $\kappa$-strategically closed but not even $\omega_{2}$-closed. (cf. \cite[\textbf{section 6}]{Cum2010}).} and $\kappa$-distributivity is weaker than  $<\kappa$-strategic closure.\footnote{As for an example, the forcing notion for {\em killing a stationary subset of $\omega_{1}$}, is $\omega_{1}$-distributive but not even $<\omega_{1}$-strategically closed (cf. \cite[\textbf{section 6}]{Cum2010}).}

\begin{observation}{(\textbf{Lemma 3.2})}
{\em Let $V$ be a model of $\mathsf{ZFC}$. If $\mathbb{P}$ is $\kappa$-distributive and $\mathcal{F}$ is $\kappa$-complete, then $\mathsf{DC_{<\kappa}}$ is preserved in the symmetric extension of $V$ with respect to the symmetric system $\langle \mathbb{P},\mathcal{G},\mathcal{F}\rangle$.}
\end{observation}

We also observe that even if we start with a model $V$, which is a model of $\mathsf{ZF +DC_{\delta}}$ where $\mathsf{AC}$ might fail, we can still preserve $\mathsf{DC_{\delta}}$ in a symmetric extension of $V$ in certain cases. In particular, we observe the following.

\begin{observation}{(\textbf{Lemma 3.4})}
{\em Let $\delta<\kappa$ and $V$ be a model of $\mathsf{ZF + DC_{\delta}}$. If $\mathbb{P}$ is ($\delta+1$)-strategically closed and $\mathcal{F}$ is $\kappa$-complete, then  $\mathsf{DC_{\delta}}$ is preserved in the symmetric extension of $V$ with respect to the symmetric system $\langle \mathbb{P},\mathcal{G},\mathcal{F}\rangle$. }
\end{observation}

\subsection{On a question of Apter} Woodin asked in the context of $\mathsf{ZFC}$, that if $\kappa$ is strongly compact and $\mathsf{GCH}$ holds below $\kappa$, then must $\mathsf{GCH}$ hold everywhere? The problem is still open in the context of $\mathsf{ZFC}$. One variant of this question is if $\mathsf{GCH}$ can fail at every limit cardinal less than or equal to a strongly compact cardinal $\kappa$ where as $\mathsf{GCH}$ holds above $\kappa^{+}$. Apter answered this in the context of $\mathsf{ZF}$. Apter \cite[\textbf{Theorem 3}]{Apt2012} constructed a model where $\kappa$ is a regular limit cardinal and a supercompact cardinal, and $\mathsf{GCH}$ holds for a limit $\delta$ if and only if $\delta>\kappa$. In that model the Countable Choice--or $\mathsf{AC_{\omega}}$-- fails. At the end of \cite{Apt2012}, Apter asked the following question.

\begin{question}
{\em Is it possible to construct analogs of Theorem 3 in which some weak version of $\mathsf{AC}$ holds ?}
\end{question}

The author and Karagila constructed a symmetric extension to answer \textbf{Question 1.3} in \cite[\textbf{Theorem 4.1}]{Kar2019}. 

\begin{thm}{(cf. \cite[\textbf{Theorem 4.1}]{Kar2019})}
{\em Let V be a model of $\mathsf{ZFC + GCH}$ with a supercompact cardinal $\kappa$. Then there is a
symmetric extension in which $\mathsf{DC_{<\kappa}}$ holds, $\kappa$ is a regular limit cardinal and supercompact, and $\mathsf{GCH}$ holds for a limit cardinal $\delta$ if and only if $\delta >\kappa$.}
\end{thm}

For the sake of convenience, we call the symmetric extension constructed
in \cite[\textbf{Theorem 4.1}]{Kar2019}
as $\mathcal{N}_{1}$. We study an argument to preserve the supercompactness of $\kappa$ in $\mathcal{N}_{1}$ applying the methods of \cite{Ina2013} and prove the following in \textbf{section 4}. 

\begin{thm}
{\em In $\mathcal{N}_{1}$, $\mathsf{AC_{\kappa}}$ fails.}
\end{thm}

\subsection{Proving Dimitriou's conjecture} Dimitriou constructed a symmetric extension based on finite support products of collapsing functions. At
the end of \cite[\textbf{section 1.4}]{Dim2011}, Dimitriou conjectured that $\mathsf{DC}$ would fail in the symmetric extension (cf. \cite[\textbf{Question 1}, \textbf{Chapter 4}]{Dim2011}).
We prove the conjecture. For the sake of convenience, we call this model as Dimitriou's model and prove the following in \textbf{section 5}. 

\begin{thm}
{\em In Dimitriou's model, $\mathsf{AC_{\omega}}$ fails.}
\end{thm}

\subsection{Reducing the assumption of supercompactness to strong compactness} Apter and Cody \cite[\textbf{Theorem 2}]{AC2013} obtained a model of $\mathsf{ZF +\neg AC_{\omega}}$ where $\aleph_{1}$ and $\aleph_{2}$ are both singular of cofinality $\omega$, and there is a sequence of distinct subsets of $\aleph_{1}$ of length equal to any predefined ordinal, assuming a supercompact cardinal $\kappa$. In \textbf{section 6}, we observe that applying a recent result of Usuba (cf. \cite[\textbf{Theorem 3.1}]{ADU2021}) followed by working with a model of $\mathsf{ZF +\neg AC_{\omega}}$ constructed using {\em strongly compact Prikry forcing}
, it is possible to reduce the assumption of a supercompact cardinal $\kappa$ to a strongly compact cardinal $\kappa$.

\begin{observation}
{\em Suppose  that $\kappa$ is a strongly compact cardinal, $\mathsf{GCH}$ holds, and $\theta$ is an ordinal. Then there is a model of $\mathsf{ZF +\neg AC_{\omega}}$ in which $cf(\aleph_{1})=cf(\aleph_{2})=\omega$,
and there is a sequence of distinct subsets of $\aleph_{1}$ of length $\theta$.}
\end{observation}

Similarly, we reduce the large cardinal assumption of \cite[\textbf{Theorem 3}]{AC2013} from a supercompact cardinal to a strongly compact cardinal. Apter and Cody \cite[\textbf{Theorem 3}]{AC2013} obtained a model of $\mathsf{ZF +\neg AC_{\omega}}$ where $\aleph_{\omega}$ and $\aleph_{\omega+1}$ are both singular with $\omega\leq cf(\aleph_{\omega+1})<\aleph_{\omega}$, and there is a sequence of distinct subsets of $\aleph_{\omega}$ of length equal to any predefined ordinal, assuming a supercompact cardinal $\kappa$.
We prove the following in \textbf{section 6}. 

\begin{observation}
{\em Suppose that $\kappa$ is a strongly compact cardinal, $\mathsf{GCH}$ holds, and $\theta$ is an ordinal. Then there is a model of $\mathsf{ZF +\neg AC_{\omega}}$ in which $\aleph_{\omega}$ and $\aleph_{\omega+1}$ are both singular with $\omega\leq cf(\aleph_{\omega+1})<\aleph_{\omega}$, and there is a sequence of distinct subsets of $\aleph_{\omega}$ of length $\theta$.}
\end{observation}

\subsection{Infinitary Chang conjecture from a measurable cardinal} Assuming a measurable cardinal, Apter and Koepke constructed a model $\mathcal{N}$ of $\mathsf{ZF}$ based on L\'{e}vy collapse in \cite[\textbf{Theorem 11}]{AK2006}. In $\mathcal{N}$, $\omega_{1}$ is singular, 
and $\aleph_{\omega_{1}}$ is a Rowbottom cardinal carrying a Rowbottom filter. They mentioned that in $\mathcal{N}$, $\mathsf{AC_{\omega}}$ fails because of the singularity of $\omega_{1}$. In \textbf{section 7}, we observe an {\em infinitary Chang conjecture} in a symmetric extension, which is very similar to $\mathcal{N}$, except we consider a finite support product construction. We use the observation that it is possible to force a {\em coherent} sequence of Ramsey cardinals after performing Prikry forcing on a normal measure over a measurable cardinal $\kappa$ (cf. \cite[\textbf{Theorem 3}]{AK2006}). We also use the observation that an infinitary Chang conjecture can be established in a symmetric model, assuming a coherent sequence of Ramsey cardinals. As in the model of \cite[\textbf{Theorem 11}]{AK2006}, $\omega_{1}$ is singular and therefore $\mathsf{AC_{\omega}}$ fails.

\begin{thm}
{\em Let $V'$ be a model of $\mathsf{ZFC}$ where there is a measurable cardinal. Then there is a generic extension $V$ of $V'$, and a symmetric extension $V(G)$ of $V$ such that $\omega_{1}$ is singular in $V(G)$.
Moreover, an infinitary Chang conjecture holds in $V(G)$.
}
\end{thm}

Similarly, we also observe an infinitary Chang conjecture in the model $\mathcal{N}$. For the sake of convenience, we call the model $\mathcal{N}$ as Apter and Koepke's model and prove the following in \textbf{section 7}.  

\begin{thm}
{\em An infinitary Chang conjecture holds in Apter and Koepke's model. Moreover, $\aleph_{\omega_{1}}$ is an almost Ramsey cardinal in the model.}
\end{thm}

\subsection{Mutual stationarity property from a sequence of measurable cardinals}
Foreman and Magidor \cite{FM2001} introduced the idea of {\em mutual stationarity}. They asked if there is a model of set theory in which every sequence of stationary subsets of the $\aleph_{n}$'s of a fixed cofinality is mutually
stationary (cf. \cite[page 290]{FM2001}). Assuming an $\omega$-sequence of supercompact cardinals, Apter \cite[\textbf{Theorem 1}]{Apt2005} constructed a model of $\mathsf{ZF}+\mathsf{DC}$ in which if $\langle S_{n} : 1 \leq n<\omega\rangle$ is a sequence of stationary sets such that $S_{n} \subseteq \aleph_{n}$ for every $1 \leq n <\omega$, then $\langle S_{n} : 1 \leq n<\omega\rangle$ is mutually stationary. Apter \cite[\textbf{Theorem 1}]{Apt1983a} further obtained a similar model based on L\'{e}vy collapse as constructed in \textbf{\cite{Apt2005}}, where $\aleph_{\omega}$ carries a Rowbottom filter and $\mathsf{DC_{\aleph_{n_{0}}}}$ holds for any arbitrary $n_{0}\in \omega$, from an $\omega$-sequence of measurable cardinals. For the sake of convenience, we fix an arbitrary $n_{0}\in \omega$ in the ground model $V$, call the model from \cite[\textbf{Theorem 1}]{Apt1983a} as $\mathcal{N}_{n_{0}}$, and prove the following in \textbf{section 8}. 

\begin{observation}
{\em The following hold in the model $\mathcal{N}_{n_{0}}$.

\begin{enumerate}
    \item For each $1\leq k<\omega$, $\aleph_{n_{0}+2(k+1)}$ is a measurable cardinal and $\aleph_{n_{0}+2k+1}$ is not a measurable cardinal. In particular, for each $1\leq k<\omega$, there are no unifrom ultrafilters on $\aleph_{n_{0}+2k+1}$.
    \item If $\langle S_{k}: 1\leq k <\omega\rangle$ is a sequence of stationary sets such that $S_{k}\subseteq\aleph_{n_{0}+2(k+1)}$ for every $1\leq k <\omega$, then $\langle S_{k}: 1\leq k <\omega\rangle$ is mutually stationary.
    \item $\aleph_{\omega}$ is an almost Ramsey cardinal.
\end{enumerate}
}
\end{observation}

\textbf{Structure of the paper.} 
\begin{itemize}
    \item In \textbf{section 2}, we cover the basics.
    \item In \textbf{section 3}, we prove \textbf{Observations 1.1 $\&$  1.2}.
    \item In \textbf{section 4}, we prove \textbf{Theorem 1.5}.
    
    \item In \textbf{section 5}, we prove \textbf{Theorem 1.6}. 
    
    \item In \textbf{section 6}, we prove \textbf{Observations 1.7  $\&$ 1.8}. 
    
    \item In \textbf{section 7}, we prove \textbf{Theorems 1.9 $\&$ 1.10}. 
    \item In \textbf{section 8}, we prove \textbf{Observation 1.11}.
\end{itemize}
 
\section{Basics}
\subsection{Large Cardinals}
In this section, we recall the definition of inaccessible cardinals in the context of $\mathsf{ZFC}$ and other large cardinals in the context of $\mathsf{ZF}$. In $\mathsf{ZFC}$, we say $\kappa$ is a strongly inaccessible cardinal if it is a regular strong limit cardinal where the definition of ``strong limit" is that for all $\alpha<\kappa$, we have $2^{\alpha}<\kappa$. In the context of $\mathsf{ZF}$, the above definition doesn't make sense, as $2^{\alpha}$ may not be well-ordered. We refer the reader to \cite{BDL2007} for details concerning inaccessible cardinals in the context of $\mathsf{ZF}$.
We recall some large cardinal definitions in the context of $\mathsf{ZF}$ from \cite{Kan2003}.

\begin{defn} Let $\kappa$ be an uncountable cardinal.
\begin{enumerate}

\item The cardinal $\kappa$ is {\em weakly compact} if for all $f:[\kappa]^{2}\rightarrow 2$, there is a homogeneous set $X\subseteq \kappa$ for $f$ of order type $\kappa$. 

\item The cardinal $\kappa$ is {\em Ramsey} if for all $f:[\kappa]^{<\omega}\rightarrow 2$, there is a homogeneous set $X\subseteq \kappa$ for $f$ of order type $\kappa$. 

\item The cardinal $\kappa$ is {\em almost Ramsey} if for all $\alpha<\kappa$ and $f:[\kappa]^{<\omega}\rightarrow 2$, there is a homogeneous set $X\subseteq \kappa$ for $f$ having order type $\alpha$.

\item  The cardinal $\kappa$ is {\em $\mu$-Rowbottom} if for all $\alpha<\kappa$ and $f:[\kappa]^{<\omega}\rightarrow \alpha$, there is a homogeneous set $X\subseteq\kappa$ for $f$ of order type $\kappa$ such that $\vert f^{''}[X]^{<\omega}\vert<\mu$. We say that $\kappa$ is Rowbottom if it is $\omega_{1}$-Rowbottom. A filter $\mathcal{F}$ on $\kappa$ is a {\em Rowbottom filter} on $\kappa$ if for any $f:[\kappa]^{<\omega}\rightarrow\lambda$, where $\lambda<\kappa$, there is a set $X\in\mathcal{F}$ such that $\vert f^{''}[X]^{<\omega}\vert\leq\omega$.

\item The cardinal $\kappa$ is {\em measurable} if there is a $\kappa$-complete free ultrafilter on $\kappa$. 
A filter $\mathcal{F}$ on a cardinal $\kappa$ is {\em normal} if it is closed under diagonal intersections:

\begin{center}
    If $X_{\alpha}\in \mathcal{F}$ for all $\alpha<\kappa$, then $\Delta_{\alpha<\kappa}X_{\alpha}\in \mathcal{F}$. 
\end{center}

In $\mathsf{ZF}$ we have the following lemma. 

\begin{lem}{(cf. \cite[\textbf{Lemma 0.8}]{Dim2011})}
{\em An ultrafilter $\mathcal{U}$ over $\kappa$ is normal if and only if for every regressive $f:\kappa\rightarrow\kappa$ there is an $X\in \mathcal{U}$ such that $f$ is constant on $X$.} 
\end{lem}

Thus, we say an ultrafilter $\mathcal{U}$ over $\kappa$ is normal if for every regressive $f:\kappa\rightarrow\kappa$ there is an $X\in \mathcal{U}$ such that $f$ is constant on $X$.

\item For a set $A$, we say $\mathcal{U}$ is a {\em fine measure} on $\mathcal{P}_{\kappa}(A)$ if $\mathcal{U}$ is a $\kappa$-complete ultrafilter and for any $i\in A$, $\{x\in\mathcal{P}_{\kappa}(A): i\in x\}\in\mathcal{U}$. We say that $\mathcal{U}$ is a {\em normal measure} on $\mathcal{P}_{\kappa}(A)$ if $\mathcal{U}$ is a fine measure and if $f:\mathcal{P}_{\kappa}(A)\rightarrow A$ is such that $f(X)\in X$ for a set in $\mathcal{U}$, then $f$ is constant on a set in $\mathcal{U}$. The cardinal $\kappa$ is {\em $\lambda$-strongly compact} if there is a fine measure on $ \mathcal{P}_{\kappa}(\lambda)$; it is {\em strongly compact} if it is $\lambda$-strongly compact for all $\kappa\leq\lambda$.

\item The cardinal $\kappa$ is {\em $\lambda$-supercompact} if there is a normal measure on $ \mathcal{P}_{\kappa}(\lambda)$; it is {\em supercompact} if it is $\lambda$-supercompact for all $\kappa\leq\lambda$.
\end{enumerate}
\end{defn}

\begin{remark}
We note that the definition of supercompact (similarly strongly compact) is given in the terms of ultrafilters, which is weaker than the definition of supercompact in terms of elementary embedding due to Woodin \cite[\textbf{ Definition 220}]{Wood2010} (e.g. $\aleph_{1}$ can be supercompact or strongly compact if we consider the definition of supercompact or strongly compact in terms of ultrafilters, but $\aleph_{1}$ can not be the critical point of an elementary embedding (cf. \cite{Ina2013})). 
\end{remark}

\begin{remark}
Ikegami and Trang \cite[\textbf{section 2}]{IT2019} defined that an ultrafilter $\mathcal{U}$ on $\mathcal{P}_{\kappa}X$ is normal if for any set $A \in \mathcal{U}$ and $f : A \rightarrow \mathcal{P}_{\kappa}X$
with $\emptyset \not= f(\sigma) \subseteq \sigma$ for all $\sigma\in A$, there is an $x_{0} \in X$ such that for $\mathcal{U}$-measure one many $\sigma$ in $A$, $x_{0}\in f(\sigma)$. They note that their definition of normality is equivalent to the closure under diagonal intersections in $\mathsf{ZF}$, while it may not be equivalent to the definition of normality in our sense without $\mathsf{AC}$.
\end{remark}

From now on, all our inaccessible cardinals are strongly inaccessible. We recall that a limit of Ramsey cardinals is an almost Ramsey cardinal in $\mathsf{ZF}$  (cf. \cite[\textbf{Proposition 1}]{AK2008}). 

\subsection{Forcing extension and L\'{e}vy--Solovay Theorem} 
Let $\mathbb{P}$ be a forcing notion, by which we mean a partially ordered set with a
maximum element 1.  For $p, q \in \mathbb{P}$, we say that $p$ is stronger than $q$ or $p$ extends $q$ if $p \leq q$. Let $G$ be a $\mathbb{P}$-generic filter over $V$ and
$V^{\mathbb{P}}$ be a class of all $\mathbb{P}$-names defined recursively as follows: if $\tau$ is a set, $\tau \in V^{\mathbb{P}}$ if and only if $\tau \subseteq V^{\mathbb{P}} \times \mathbb{P}$. The interpretation of a $\mathbb{P}$-name $\tau$ by $G$ is defined recursively as $\tau^G = \{\sigma ^{G} : \exists p \in G((\sigma, p) \in \tau)\}$ and the generic
extension is defined as $V[G]=\{\tau^{G} : \tau \in V^{\mathbb{P}}\}$. 
 We recall that $V[G]$ is the smallest transitive
model of $\mathsf{ZFC}$ which has the same ordinals as $V$ and contains both $V$ and $G$. If $\mathbb{P}$ is our forcing notion and $G$ is a $\mathbb{P}$-generic filter  over $V$, we will abuse notation somewhat and use both $V^{\mathbb{P}}$ and $V[G]$ to denote the generic extension of $V$.
We state a part of L\'{e}vy--Solovay Theorem (cf. \cite[\textbf{Theorem 21.2}]{Jec2003}) in $\mathsf{ZFC}$. 

\begin{thm}
{\em 
Let $\kappa$ be an infinite cardinal, and let $\mathbb{P}$ be a forcing notion of size less than $\kappa$. Let G be a $\mathbb{P}$-generic filter over V. 
\begin{enumerate}
    \item If $\kappa$ is Ramsey in V, then $\kappa$  is Ramsey in $V[G]$.
    \item If $\kappa$ is measurable with a $\kappa$-complete ultrafilter $\mathcal{U}$ in V, then $\kappa$ is measurable with a $\kappa$-complete ultrafilter $\mathcal{U}_1=\{X\subseteq \kappa: X\in V[G],\text{and there is a } Y\in\mathcal{U} \text{ such that   }Y\subseteq X\}$ defined in $V[G]$ generated by $\mathcal{U}$ in $V[G]$.
\end{enumerate}
}
\end{thm}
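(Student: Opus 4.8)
The plan is to give the classical Levy--Solovay argument, exploiting two facts: a poset of size $<\kappa$ has the $\kappa$-c.c., and both Ramsey and measurable cardinals are inaccessible, so that $2^{|\mathbb{P}|}<\kappa$. First I would record the preliminaries. Since $|\mathbb{P}|<\kappa$, every antichain of $\mathbb{P}$ has size $<\kappa$, so $\mathbb{P}$ has the $\kappa$-c.c.; hence $\mathbb{P}$ preserves all cardinals and cofinalities $\geq\kappa$, and in particular $\kappa$ remains a cardinal in $V[G]$. Writing $\lambda=|\mathbb{P}|$ and fixing, for a given $p\in G$, an enumeration $\langle q_i:i<\lambda\rangle$ of the conditions below $p$ will be the common device in both cases. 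The point is that a subset or coloring living in $V[G]$ can be ``pulled back'' to a $V$-definable object whose range is small (using inaccessibility), after which the largeness of $\kappa$ in $V$ does the work.

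For the measurable case the heart is the claim: for every $X\subseteq\kappa$ with $X\in V[G]$ there is $Y\in\mathcal{U}$ with either $Y\subseteq X$ or $Y\cap X=\emptyset$. To prove it, fix a name $\dot X$ and $p\in G$ with $p\Vdash\dot X\subseteq\check\kappa$, and in $V$ define $h\colon\kappa\to\mathcal{P}(\lambda)$ by $h(\alpha)=\{\,i<\lambda: q_i\Vdash\check\alpha\in\dot X\,\}$. Since $\kappa$ is inaccessible and $\lambda<\kappa$ we have $2^{\lambda}<\kappa$, so $h$ takes fewer than $\kappa$ values; by $\kappa$-completeness of $\mathcal{U}$ there is $Y\in\mathcal{U}$ on which $h$ is constant with some value $A\subseteq\lambda$. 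The key point is that for $\alpha\in Y$ membership in $X$ is decided uniformly: using $p\in G$ one checks $\alpha\in X$ iff $A\cap\{\,i:q_i\in G\,\}\neq\emptyset$, and the latter no longer depends on $\alpha$. Hence either $Y\subseteq X$ or $Y\cap X=\emptyset$, which shows $\mathcal{U}_1$ is an ultrafilter in $V[G]$. That $\mathcal{U}_1$ is a $\kappa$-complete filter follows since any family of fewer than $\kappa$ members of $\mathcal{U}_1$ lies above a corresponding family from $\mathcal{U}$ whose intersection is in $\mathcal{U}$ by $\kappa$-completeness; and $\mathcal{U}_1$ is free because it extends the free ultrafilter $\mathcal{U}$.

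For the Ramsey case, given $f\colon[\kappa]^{<\omega}\to 2$ in $V[G]$ I would again pull the coloring back. Fix a name $\dot f$ and $p\in G$ forcing $\dot f\colon[\check\kappa]^{<\omega}\to\check 2$, and define $F\colon[\kappa]^{<\omega}\to{}^{\lambda}3$ in $V$ by letting $F(s)(i)$ be $0$ or $1$ according as $q_i\Vdash\dot f(\check s)=0$ or $q_i\Vdash\dot f(\check s)=1$, and $F(s)(i)=2$ if $q_i$ decides neither. There are at most $3^{\lambda}<\kappa$ colors, so using $\kappa\to(\kappa)^{<\omega}_{\mu}$ for every $\mu<\kappa$ (the standard strengthening of $\kappa\to(\kappa)^{<\omega}_2$) I obtain in $V$ a set $H$ of size $\kappa$ with $F\upharpoonright[H]^n$ constant, say equal to $c_n$, for each $n$. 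I then claim $H$ is homogeneous for $f$ in $V[G]$. Fix $n$ and set $D=\{\,i<\lambda:c_n(i)\in\{0,1\}\,\}$; by homogeneity the conditions below $p$ deciding $\dot f(\check s)$ are exactly $\{q_i:i\in D\}$ for every $s\in[H]^n$, independently of $s$. By genericity $G$ meets this set, and since members of $G$ are pairwise compatible they must force a common value; choosing any $i^{*}$ with $i^{*}\in D$ and $q_{i^{*}}\in G$ gives $f(s)=c_n(i^{*})$ for all $s\in[H]^n$. Thus $f$ is constant on $[H]^n$ for each $n$, so $H$ is a homogeneous set of order type $\kappa$, witnessing that $\kappa$ is Ramsey in $V[G]$.

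The main obstacle, in both cases, is the transfer of homogeneity from $V$ back to $V[G]$: building the $V$-definable pullback coloring is routine, but one must verify that the single $V$-homogeneous set is genuinely homogeneous for the original $V[G]$-object, and in particular that the decided value is \emph{uniform} across all relevant $\alpha$ or tuples $s$. This is exactly where I would use that the set $\{\,i:q_i\in G\,\}$ is fixed once and for all (it does not vary with $\alpha$ or $s$) together with the pairwise compatibility of conditions in $G$. I would also flag that the inaccessibility of $\kappa$—automatic, since both hypotheses imply it—is precisely what guarantees $2^{\lambda}<\kappa$, and hence that the pullback coloring uses fewer than $\kappa$ colors, so that $\kappa$-completeness (resp.\ the Ramsey partition relation) can be applied in $V$.
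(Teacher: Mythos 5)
Your proof is correct and is exactly the classical Levy--Solovay argument; the paper itself gives no argument here, only a pointer to Theorem 21.2 of Jech and to the original Levy--Solovay paper, so you have simply written out in full what the paper delegates to those references. The only step you gloss is the $\kappa$-completeness of $\mathcal{U}_1$: a family of fewer than $\kappa$ sets from $\mathcal{U}$ chosen in $V[G]$ need not lie in $V$, so one should add the standard covering remark that, since $|\mathbb{P}|<\kappa$, such a family is contained in a ground-model subfamily of $\mathcal{U}$ of size $<\kappa$ (or, alternatively, run the same pull-back argument on a partition $f\colon\kappa\to\gamma$, $\gamma<\kappa$, exactly as you did for $2$-valued $f$). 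With that one-line fix the proof is complete.
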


\begin{proof}
(1) follows from \cite[\textbf{Theorem 21.2}]{Jec2003} and (2) follows from \cite[\textbf{Theorem 10}]{LS1967}.
\end{proof}

\subsection{Symmetric extension} Symmetric extensions are submodels of the generic extension containing the ground model, where $\mathsf{AC}$ can consistently fail. Let $\mathbb{P}$ be a forcing notion, $\mathcal{G}$ be a group of automorphisms of $\mathbb{P}$ and $\mathcal{F}$ be a normal filter of subgroups over $\mathcal{G}$.
We recall the following Symmetry Lemma from \cite{Jec2003}. 

\begin{lem}{\textbf{(The Symmetry Lemma; cf. \cite[\textbf{Lemma 14.37}]{Jec2003})}}
{\em Let $\mathbb{P}$ be a forcing notion,  $\varphi$ be a formula of the forcing language with $n$ free variables and let $\sigma_1,\sigma_2,...,\sigma_n$ be $\mathbb{P}$-names. If $a$ is an automorphism of $\mathbb{P}$, then $p\Vdash \varphi(\sigma_1,\sigma_2,...,\sigma_n) \Longleftrightarrow a(p)\Vdash \varphi(a(\sigma_1),a(\sigma_2),...,a(\sigma_n))$.
}
\end{lem}

For $\tau\in V^{\mathbb{P}}$, we denote  its symmetric group with respect to $\mathcal G$ by $sym^{\mathcal G}(\tau) =\{g\in \mathcal G : g\tau = \tau\}$ and say $\tau$ is {\em symmetric} with respect to $\mathcal{F}$ if $sym^{\mathcal G}(\tau)\in\mathcal F$. Let $HS^\mathcal F$ be the class of all hereditary symmetric names.
That is, recursively for $\tau\in V^{\mathbb{P}}$, 

\begin{center}
    $\tau\in HS^{\mathcal{F}}$ iff $\tau$ is symmetric with respect to $\mathcal{F}$, and for each $\sigma\in dom(\tau)$, $\sigma\in HS^{\mathcal{F}}$.
\end{center}

We define the symmetric extension of $V$ with respect to $\mathcal F$ as $V(G)^\mathcal F = \{\tau^G : \tau\in HS^\mathcal F\}$. For the sake of our convenience we omit the superscript $\mathcal{F}$ sometimes and call $V(G)^\mathcal F$ as $V(G)$, $HS^\mathcal F$ as $HS$, and $sym^\mathcal F(\tau)$ as $sym(\tau)$.

\begin{defn}{\textbf{(Symmetric system; cf. \cite[\textbf{Definition 2.1}]{HK2019})}} We say $\langle\mathbb{P},\mathcal{G},\mathcal{F}\rangle$ is a {\em symmetric system} if $\mathbb{P}$ is a forcing notion, $\mathcal{G}$ is  a group of automorphisms of $\mathbb{P}$, and $\mathcal{F}$ is a normal filter of subgroups over $\mathcal{G}$.
\end{defn}

\begin{defn}{\textbf{(Tenacious system;  \textbf{cf. \cite[\textbf{ Definition 4.6}]{Kar2019a}})}}  Let $\langle \mathbb{P},\mathcal{G},\mathcal{F}\rangle$ be a symmetric system. A condition $p\in \mathbb{P}$ is {\em $\mathcal{F}$-tenacious} if $\{\pi\in\mathcal{G} : \pi(p)= p\}\in \mathcal{F}$. We say $\mathbb{P}$ is {\em $\mathcal{F}$-tenacious} if there is a dense subset of $\mathcal{F}$-tenacious conditions. We say $\langle\mathbb{P},\mathcal{G},\mathcal{F}\rangle$ is a {\em tenacious system} if $\mathbb{P}$ is $\mathcal{F}$-tenacious.
\end{defn}

Karagila and Hayut proved that every symmetric system is equivalent to a tenacious system (cf. \cite[\textbf{Appendix A}]{Kar2019a}). Thus, it is natural to assume tenacity and work with tenacious system. We recall the following theorem which states that the symmetric extension $V(G)$ is a transitive model of $\mathsf{ZF}$.
\begin{thm}
{\textbf{(cf. \cite[\textbf{Lemma 15.51}]{Jec2003})}} {\em If $\langle\mathbb{P},\mathcal{G},\mathcal{F}\rangle$ is a symmetric system and G is a $\mathbb{P}$-generic filter over $V$, then $V(G)$ is a transitive model of $\mathsf{ZF}$ and $V\subseteq V(G)\subseteq V[G]$.}
\end{thm}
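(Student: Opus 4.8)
The plan is to verify that $V(G)=V(G)^{\mathcal{F}}$ satisfies each axiom of ZF, after first disposing of the three structural claims. Throughout I rely on the three defining features of the normal filter $\mathcal{F}$ (upward closure, closure under finite intersections, and \emph{normality}, i.e. closure under conjugation $H\mapsto gHg^{-1}$) together with the Symmetry Lemma. The inclusion $V(G)\subseteq V[G]$ is immediate since $HS^{\mathcal{F}}\subseteq V^{\mathbb{P}}$. For $V\subseteq V(G)$, each automorphism in $\mathcal{G}$ fixes the canonical name $\check{x}$ of a ground-model set $x$, so $sym^{\mathcal{G}}(\check{x})=\mathcal{G}\in\mathcal{F}$; as this holds hereditarily for every $\check{y}$ with $y\in\mathrm{trcl}(x)$, we get $\check{x}\in HS^{\mathcal{F}}$ and $x=\check{x}^{G}\in V(G)$. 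Transitivity follows from the hereditary clause in the definition of $HS^{\mathcal{F}}$: if $\tau\in HS^{\mathcal{F}}$ and $y\in\tau^{G}$, then $y=\sigma^{G}$ for some $\sigma\in\mathrm{dom}(\tau)$, and hereditary symmetry gives $\sigma\in HS^{\mathcal{F}}$, so $y\in V(G)$.

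Extensionality, Foundation and Infinity are essentially free: the first two hold in any transitive subclass of the well-founded model $V[G]$, and Infinity holds because $\omega\in V\subseteq V(G)$. For Pairing and Union the only point is that the obvious $V[G]$-names can be taken symmetric: given $\tau_{1},\tau_{2}\in HS^{\mathcal{F}}$, the name $\{\langle\tau_{1},1_{\mathbb{P}}\rangle,\langle\tau_{2},1_{\mathbb{P}}\rangle\}$ is fixed by $sym^{\mathcal{G}}(\tau_{1})\cap sym^{\mathcal{G}}(\tau_{2})\in\mathcal{F}$, while the standard union-name assembled from $\tau$ is fixed by $sym^{\mathcal{G}}(\tau)$; in both cases hereditary symmetry is inherited from the components. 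The workhorse is Separation. Given a formula $\varphi$, a parameter list $\bar{\pi}$ from $HS^{\mathcal{F}}$ and $\tau\in HS^{\mathcal{F}}$, I would form, using the forcing relation for truth in $V(G)$,
\[
\rho=\{\langle\sigma,p\rangle : \sigma\in\mathrm{dom}(\tau),\ p\in\mathbb{P},\ p\Vdash\big(\sigma\in\tau \wedge \varphi^{V(G)}(\sigma,\bar{\pi})\big)\}.
\]
Setting $H=sym^{\mathcal{G}}(\tau)\cap\bigcap_{i}sym^{\mathcal{G}}(\pi_{i})\in\mathcal{F}$, the Symmetry Lemma shows that any $g\in H$ sends a pair $\langle\sigma,p\rangle\in\rho$ to $\langle g\sigma,g p\rangle\in\rho$, so $g\rho=\rho$ and $H\subseteq sym^{\mathcal{G}}(\rho)$; hence $\rho\in HS^{\mathcal{F}}$ and $\rho^{G}=\{x\in\tau^{G} : \varphi^{V(G)}(x,\bar{\pi}^{G})\}$.

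The main obstacle is Power Set (and, by the same device, Replacement/Collection), where one cannot simply read off a name because the collection of hereditarily symmetric names realizing subsets of $\tau^{G}$ is a proper class. The plan is a two-step argument. First a \emph{bounding} step: if $\mu\in HS^{\mathcal{F}}$ with $\mu^{G}\subseteq\tau^{G}$, then $\rho=\{\langle\sigma,p\rangle : \sigma\in\mathrm{dom}(\tau),\ p\Vdash\sigma\in\mu\}$ satisfies $\rho^{G}=\mu^{G}$, $\rho\in HS^{\mathcal{F}}$, and $\mathrm{dom}(\rho)\subseteq\mathrm{dom}(\tau)$; thus the set $N$ of all hereditarily symmetric names with domain a subset of $\mathrm{dom}(\tau)\times\mathbb{P}$ already realizes every element of $\mathcal{P}(\tau^{G})\cap V(G)$. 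Second a \emph{symmetry} step: one forms $\pi=\{\langle\rho,1_{\mathbb{P}}\rangle : \rho\in N\}$ and checks $\pi\in HS^{\mathcal{F}}$. This is exactly where \emph{normality} is indispensable: for $g\in sym^{\mathcal{G}}(\tau)$ the image $g\rho$ again has $\mathrm{dom}(g\rho)\subseteq\mathrm{dom}(g\tau)=\mathrm{dom}(\tau)$ and is hereditarily symmetric with $sym^{\mathcal{G}}(g\rho)=g\,sym^{\mathcal{G}}(\rho)\,g^{-1}\in\mathcal{F}$ by conjugation-closure, so $g$ merely permutes $N$ and $g\pi=\pi$; hence $sym^{\mathcal{G}}(\pi)\supseteq sym^{\mathcal{G}}(\tau)\in\mathcal{F}$. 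Combining $\pi$ with the already-established Separation yields $\mathcal{P}(\tau^{G})\cap V(G)\in V(G)$. Replacement is handled analogously: given a functional $\varphi$ and $\tau\in HS^{\mathcal{F}}$, one uses the forcing relation to bound the ranks of symmetric names witnessing the images, collects them into a $sym^{\mathcal{G}}(\tau)$-invariant set, and assembles a symmetric name, again leaning on normality to close under the relevant automorphisms. I expect this symmetry verification for the power-set/collection names to be the delicate point, while every remaining axiom is routine once the Symmetry Lemma and the filter properties are in hand; note that Choice is deliberately \emph{not} asserted, which is the entire purpose of the construction.
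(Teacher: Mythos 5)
Your argument is correct, but note that the paper offers no proof of this statement at all: it is recalled verbatim as Lemma~15.51 of Jech's \emph{Set Theory}, and your write-up is essentially the standard proof found there (canonical names for $V\subseteq V(G)$, hereditary symmetry for transitivity, the relativized forcing relation for Separation, and the bounding-plus-normality device for Power Set and Collection). The one point worth making explicit in a full write-up is that the Symmetry Lemma for the \emph{relativized} forcing relation already requires that automorphisms in $\mathcal{G}$ map $HS^{\mathcal{F}}$ to itself, which is the same appeal to normality of $\mathcal{F}$ that you correctly isolate in the Power Set step.
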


\subsection{Terminologies} We recall the terminologies like {\em Approximation Lemma}, {\em Approximation property}, and {\em ($\mathcal{G},\mathcal{I}$)-homogeneous forcing notion}, from \cite{Dim2011} and \cite{Ina2013}.
For $E\subseteq\mathbb{P}$, let us define its pointwise stabilizer group to be 
fix$_{\mathcal{G}}E = \{g\in\mathcal{G}:\forall p \in E (g(p)=p)\}$, 
i.e., it is the set of automorphisms which fix $E$ pointwise. We denote fix$_{\mathcal{G}}E$ by fix $E$ for the sake of convenience.

\begin{defn}{\textbf{($\mathcal G$-symmetry generator; \cite[\textbf{Definition 14}]{Ina2013})}}
Let $\mathbb{P}$ be a forcing notion and $\mathcal{G}$ be a group of automorphisms of $\mathbb{P}$. A subset $\mathcal{I}\subseteq \mathcal P(\mathbb{P})$ is called a $\mathcal G$-{\em symmetry generator} if it consists of up-sets, is closed under unions, and if for all $g\in \mathcal G$ and $E\in \mathcal{I}$, there is an $E'\in\mathcal{I}$ such that $g$(fix$E$)$g^{-1}\supseteq$ fix$E'$. 
\end{defn}

We can see that if $\mathcal{I}$ is a $\mathcal G$-symmetry generator, then the set $\{$fix$E: E\in\mathcal{I}\}$ generates a normal filter over $\mathcal G$ (cf. \cite[\textbf{Lemma 15}]{Ina2013}).

\begin{defn}
Let $\mathcal{I}$ be the $\mathcal G$-symmetry generator, we say $E\in \mathcal{I}$ {\em supports} a name $\sigma\in HS$ if fix$E\subseteq sym (\sigma$).  
\end{defn}

\begin{defn}{\textbf{(Projectable $\mathcal{G}$-symmetry generator; \cite[\textbf{Definition 1.25}]{Dim2011} $\&$ \cite[\textbf{Definition 17}]{Ina2013})}}
Let $\mathbb{P}$ be a forcing notion, $\mathcal{G}$ be a group of automorphisms of $\mathbb{P}$, and $\mathcal{I}$ be a $\mathcal{G}$-symmetry generator.
We say $\mathcal{I}$ is {\em projectable} for the pair ($\mathbb{P},\mathcal{G}$) if for every $p\in\mathbb{P}$ and every $E\in \mathcal{I}$, there is a $p^{*}\in E$ that is minimal  (with respect to the partial order) and unique such that $p^{*}\geq p$.
We call $p\restriction E = p^{*}$ the {\em projection} of $p$ to $E$. 
\end{defn}

For the rest of this section, let $\mathbb{P}$ be a forcing notion, $\mathcal{G}$ be a group of automorphisms of $\mathbb{P}$, and $\mathcal{I}$ be a projectable $\mathcal{G}$-symmetry generator for the pair ($\mathbb{P},\mathcal{G}$). 

\begin{defn}{\textbf{(Approximation property;  \cite[\textbf{Definition 18}]{Ina2013})}}
We say that the triple $\langle\mathbb{P},\mathcal{G},\mathcal{I}\rangle$ has the {\em approximation property} if for any formula $\varphi$ with $n$ free variables, and names $ \sigma_1,\sigma_2,...,\sigma_n\in HS$ all with support $E\in\mathcal{I}$, and for any $p\in \mathbb{P}$, $p\Vdash\varphi(\sigma_1,\sigma_2,...,\sigma_n)$ implies that $p\restriction E \Vdash\varphi(\sigma_1,\sigma_2,...,\sigma_n) $. 
\end{defn}

\begin{defn}{\textbf{($(\mathcal{G},\mathcal{I})$-homogeneous forcing notion; \cite[\textbf{Definition 1.26}]{Dim2011} $\&$ \cite[\textbf{Definition 19}]{Ina2013})}}
We say that $\mathbb{P}$ is {\em $(\mathcal{G},\mathcal{I})$-homogeneous} if for every $E\in \mathcal{I}$, every $p\in\mathbb{P}$, and every $q\in\mathbb{P}$ such that $q\leq p\restriction E$, there is an automorphism $a\in$ fix$E$ such that $a(p)\parallel q$.
\end{defn}

\begin{lem}
{\textbf{(\cite[\textbf{Lemma 1.27}]{Dim2011} $\&$ \cite[\textbf{Lemma 20}]{Ina2013})}}
{\em If $\mathbb{P}$ is $(\mathcal{G},\mathcal{I})$-homogeneous, then $\langle\mathbb{P},\mathcal G,\mathcal{I}\rangle$ has the approximation property.}
\end{lem}

We note that if $E \in \mathcal{I}$, then $E$ itself is a forcing notion (with the same top element as $\mathbb{P}$). So, we can say that set of pairs $\tau$ is an $E$-name iff $\tau$ is a relation and for every $(\sigma, p) \in\tau$, $\sigma$ is an $E$-name and $p \in E$.

\begin{lem}
{\textbf{(Approximation Lemma; \cite[\textbf{Lemma 1.29}]{Dim2011} $\&$ \cite[\textbf{Lemma 21}]{Ina2013})}}
{\em If the triple $\langle\mathbb{P},\mathcal G,\mathcal{I}\rangle$ has the approximation property then for all set of ordinals $X\in V(G)$, there exists an $E\in \mathcal{I}$ and an $E$-name for $X$. Thus, $X\in V[G\cap E]$.}
\end{lem}

\subsection{Homogeneity of forcing notions} We recall the definition of {\em weakly homogeneous} and
{\em cone homogeneous} forcing notion from \cite{DF2008}. 

\begin{defn}{\textbf{(Weakly homogeneous forcing notion; \cite[\textbf{Definition 2}]{DF2008})}}
We say a set forcing notion $\mathbb{P}$ is {\em weakly homogeneous} if and only if  for any $p,q\in \mathbb{P}$, there is an automorphism $a:\mathbb{P}\rightarrow\mathbb{P}$ such that $a(p)$ and $q$ are compatible.\footnote{The Levy collapse $Col(\lambda,<\kappa)$ is weakly homogeneous, given an infinite cardinal $\kappa$ and a regular cardinal $\lambda$.}
\end{defn}

\begin{defn}{\textbf{(Cone homogeneous forcing notion; \cite[\textbf{Definition 2}]{DF2008})}}
For $p\in\mathbb{P}$, let $Cone(p)$ denote $\{r\in\mathbb{P}:r\leq p\}$, the cone of conditions in $\mathbb{P}$ below $p$. We say a set forcing notion $\mathbb{P}$ is {\em cone homogeneous} if and only if for any $p, q\in \mathbb{P}$, there exist $p'\leq p$, $q'\leq q,$ and an isomorphism $\pi: Cone(p')\rightarrow Cone(q')$.
\end{defn}

If $\mathbb{P}$ is weakly homogeneous, then it is cone homogeneous too (cf. \cite[\textbf{Fact 1}]{DF2008}). Also, the finite support products of weakly (cone) homogeneous forcing notions are weakly (cone) homogeneous. A crucial feature of symmetric extensions using weakly (cone) homogeneous forcings are that they can be approximated by certain intermediate submodel where $\mathsf{AC}$ holds. 

\subsection{Failure of weak choice principles} 
We use $\mathsf{AC_{\kappa}}$ to denote the statement ``Every family of $\kappa$ non-empty sets admits a choice function''. We note that if $\kappa^{+}$ is singular, then $\mathsf{AC_{\kappa}}$ fails. This is due to the following well known fact.

\begin{fact}
{\em For all successor cardinal $\lambda$, $\mathsf{AC_{\kappa}}$ implies $cf(\lambda)>\kappa$.} 
\end{fact}

We sketch another way of refuting $\mathsf{AC_{\kappa}}$. 
For sets $A$ and $B$, we use $\mathsf{AC_{A}(B)}$ to denote the statement ``for each set $X$ of non-empty subsets of $B$, if there is an injection from $X$ to $A$ then there is a choice function for $X$". We recall \cite[\textbf{Lemmas 0.2, 0.3, 0.12}]{Dim2011}.
\begin{itemize}
\item Under $\mathsf{AC_{A}(B)}$, if there is a surjection  from $B$ to $A$, then there is an injection from $A$ to $B$ (cf. \cite[\textbf{Lemma 0.2}]{Dim2011}). 

\item For every infinite cardinal $\kappa$, there is a surjection from $\mathcal{P}(\kappa)$ onto $\kappa^{+}$ in $\mathsf{ZF}$ (cf. \cite[\textbf{Lemma 0.3}]{Dim2011}).

\item If $\kappa$ is measurable with a normal measure or $\kappa$ is weakly compact and $\alpha<\kappa$, then there is no injection $f:\kappa\rightarrow \mathcal{P}(\alpha)$ in $\mathsf{ZF}$ (cf. \cite[\textbf{Proposition 0.1}]{Bul1978}, \cite[\textbf{Lemma 0.12}]{Dim2011}). 
\end{itemize}

The following lemma states that if a successor cardinal $\kappa$ is either measurable with a normal measure or weakly compact then $\mathsf{AC_{\kappa}}$ fails, which is \cite[\textbf{Corollary 0.3}]{Bul1978}.

\begin{lem}
{\em Let $\kappa=\alpha^{+}$ be a successor cardinal. If $\kappa$ is measurable with a normal measure or weakly compact then $\mathsf{AC_{\alpha^{+}}(\mathcal{P}(\alpha))}$ fails}. 
\end{lem}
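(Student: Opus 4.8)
The plan is to unwind the definitions and assemble the three cited facts from \cite{Dim2011} into a short contradiction argument, following the hint supplied in the paragraph preceding the statement. The goal is to show that $AC_{\alpha^{+}}(\mathcal{P}(\alpha))$ fails when $\kappa=\alpha^{+}$ is either measurable with a normal measure or weakly compact. Recall that $AC_{A}(B)$ asserts that every set $X$ of nonempty subsets of $B$ that injects into $A$ admits a choice function; here $A=\kappa=\alpha^{+}$ and $B=\mathcal{P}(\alpha)$. So I would argue by contradiction, assuming $AC_{\alpha^{+}}(\mathcal{P}(\alpha))$ holds, and derive an injection $\kappa\to\mathcal{P}(\alpha)$, which the excerpt explicitly forbids for such $\kappa$.

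First I would invoke the two combinatorial facts recalled just before the lemma. Fact (i): in ZF, for every infinite cardinal $\kappa$ there is a surjection $\mathcal{P}(\kappa)\twoheadrightarrow\kappa^{+}$; applied to $\alpha$ in place of $\kappa$, this gives a surjection $g:\mathcal{P}(\alpha)\twoheadrightarrow\alpha^{+}=\kappa$. Fact (ii): if $\kappa$ is measurable with a normal measure or weakly compact and $\alpha<\kappa$, then there is no injection $f:\kappa\to\mathcal{P}(\alpha)$. Since here $\kappa=\alpha^{+}$, indeed $\alpha<\kappa$, so Fact (ii) applies and tells us no injection $\kappa\to\mathcal{P}(\alpha)$ exists. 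The strategy is to show that the assumed choice principle, combined with the surjection $g$ from Fact (i), would manufacture exactly such a forbidden injection.

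The engine for this is the recalled consequence of $AC_{A}(B)$: namely, that under $AC_{A}(B)$, if there is a surjection from $B$ onto $A$ then there is an injection from $A$ into $B$. Taking $A=\kappa$ and $B=\mathcal{P}(\alpha)$, the surjection $g:\mathcal{P}(\alpha)\twoheadrightarrow\kappa$ from Fact (i) together with the assumed $AC_{\alpha^{+}}(\mathcal{P}(\alpha))=AC_{\kappa}(\mathcal{P}(\alpha))$ yields an injection $f:\kappa\hookrightarrow\mathcal{P}(\alpha)$. This directly contradicts Fact (ii). Hence $AC_{\alpha^{+}}(\mathcal{P}(\alpha))$ cannot hold, completing the proof.

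I expect the main subtlety to lie not in any calculation but in verifying that the generic statement ``$AC_A(B)$ plus a surjection $B\twoheadrightarrow A$ gives an injection $A\hookrightarrow B$'' applies verbatim with $A=\kappa$ and $B=\mathcal{P}(\alpha)$. The mechanism is standard: for each $a\in A=\kappa$ the fiber $g^{-1}(\{a\})\subseteq\mathcal{P}(\alpha)$ is a nonempty subset of $B$, and the family $X=\{g^{-1}(\{a\}):a\in\kappa\}$ is indexed by (and so injects into) $A=\kappa=\alpha^{+}$; applying $AC_{\alpha^{+}}(\mathcal{P}(\alpha))$ to $X$ selects one element from each fiber, and sending $a$ to its selected element is the desired injection $\kappa\hookrightarrow\mathcal{P}(\alpha)$ since distinct fibers are disjoint. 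The only point requiring care is that this entire argument is carried out in ZF without any further appeal to choice beyond the hypothesized fragment, which is exactly why the precise form $AC_A(B)$ (rather than full $AC_\kappa$) is what the lemma rules out.
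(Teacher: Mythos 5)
Your proposal is correct and follows exactly the paper's argument: assume $AC_{\alpha^{+}}(\mathcal{P}(\alpha))$, combine the ZF surjection $\mathcal{P}(\alpha)\twoheadrightarrow\alpha^{+}$ with the recalled fact that $AC_{A}(B)$ turns a surjection $B\twoheadrightarrow A$ into an injection $A\hookrightarrow B$, and contradict the non-existence of an injection $\kappa\to\mathcal{P}(\alpha)$ for $\kappa$ measurable with normal measure or weakly compact. Your added unwinding of the fiber-selection mechanism is a harmless elaboration of the same route.
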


\begin{proof}
Let $\mathsf{AC_{\alpha^{+}}(\mathcal{P}(\alpha))}$ holds. We show $\kappa=\alpha^{+}$ is neither measurable with a normal measure nor weakly compact. In $\mathsf{ZF}$, there is a surjection from $\mathcal{P}(\alpha)$ onto $\alpha^{+}$. Now $\mathsf{AC_{\alpha^{+}}(\mathcal{P}(\alpha))}$ implies there is an injection $f'$ from $\alpha^{+}$ to $\mathcal{P}(\alpha)$ which states that $\kappa=\alpha^{+}$ is neither measurable with a normal measure nor weakly compact.  
\end{proof}

\section{Preserving Dependent Choice in symmetric extensions} 
Dependent Choice, denoted by $\mathsf{DC}$ or $\mathsf{DC_{\omega}}$, is a weaker version of $\mathsf{AC}$ which is strictly stronger\footnote{In Howard--Rubin's first model ($\mathcal{N}_{38}$ in \cite{HR1998}), $\mathsf{AC_{\omega}}$ holds but $\mathsf{DC_{\omega}}$ fails.} than the Countable Choice, denoted by $\mathsf{AC_{\omega}}$. This principle is strong enough to give the basis of analysis as it is equivalent to the Baire Category Theorem which is a fundamental theorem in functional analysis. Further, $\mathsf{DC}$ is equivalent to other important theorems like the countable version of the Downward L\"{o}weinheim--Skolem theorem and every tree of height $\omega$ without a maximum node has an infinite branch etc. On the other hand, $\mathsf{AC}$ has several controversial applications like the existence of a non-Lebesgue measurable set of real numbers, Banach--Tarski Paradox and the existence of a well-ordering of real numbers whereas $\mathsf{DC}$ does not have such counter-intuitive consequences. Thus it is desirable to preserve $\mathsf{DC}$ in symmetric extensions. 

For an infinite cardinal $\kappa$, we denote by $\mathsf{DC_{\kappa}}$ the principle of Dependent Choice for $\kappa$. This principle states that for every non-empty set $X$, if $R$ is a binary relation such that for each ordinal $\alpha<\kappa$, and each $f:\alpha\rightarrow X$ there is some $y\in X$ such that $f$ $R$ $y$, then there is $f:\kappa\rightarrow X$ such that for each $\alpha<\kappa$, $f\restriction\alpha$ $R$ $f(\alpha)$.
We denote the assertion $(\forall\lambda<\kappa)\mathsf{DC_{\lambda}}$ by $\mathsf{DC_{<\kappa}}$. We recall that $\mathsf{AC}$ is equivalent to $\forall\kappa(\mathsf{DC_{\kappa}})$ and $\mathsf{DC_{\kappa}}$ implies $\mathsf{AC_{\kappa}}$.  We refer the reader to \cite[\textbf{Chapter 8}]{Jec1973}, for details concerning $\mathsf{DC_{\kappa}}$ and related choice principles.

We recall the definitions of a forcing notion with the {\em $\kappa$-chain condition} ($\kappa$-c.c.) and
of forcing notions that are {\em $\kappa$-closed} and {\em $\kappa$-distributive} from \cite[\textbf{Definition 5.8}]{Cum2010}. We also recall the definitions of forcing notions that are {\em $<\kappa$-strategically closed}, {\em $\kappa$-strategically closed}, and {\em $(\kappa+1)$-strategically closed} from \cite[\textbf{Definition 5.14, Definition 5.15}]{Cum2010}.
Monro \cite[\textbf{Corollary 5.11}]{Mon1983} proved that $\mathsf{DC_{\kappa}}$ is
not preserved by generic extensions for any infinite cardinal $\kappa$.

Karagila \cite[\textbf{Lemma 1}]{Kar2014} proved that if $\mathbb{P}$ is $\kappa$-closed and $\mathcal{F}$ is $\kappa$-complete then $\mathsf{DC_{<\kappa}}$ is preserved in the symmetric extension in terms of symmetric system $\langle \mathbb{P},\mathcal{G},\mathcal{F}\rangle$.
The author and Karagila both observed independently that ``$\mathbb{P}$ is $\kappa$-closed'' can be replaced by ``$\mathbb{P}$ has the $\kappa$-c.c.'' in  \cite[\textbf{Lemma 1}]{Kar2014}. The author noticed this observation combining the role of $\kappa$-c.c. forcing notions from \cite[\textbf{Lemma 2.2}]{Apt2001} and the role of $\kappa$-completeness of $\mathcal{F}$ from \cite[\textbf{Lemma 1}]{Kar2014}. 

The idea was the following. If $\mathbb{P}$ has the $\kappa$-c.c., then any antichain is of size less than $\kappa$. So by Zorn's Lemma in the ground model, there is a maximal antichain of conditions $\mathcal{A}=\{p_{\alpha}: \alpha<\gamma<\kappa\}$ extending $p$ such that for all $\alpha<\gamma$, $p_{\alpha}\Vdash \dot{f}(\hat{\alpha})=\dot{t}_{\alpha}$ where $\dot{t}_{\alpha}\in HS$. Then we can follow the proof of \cite[\textbf{Lemma 1}]{Kar2014} to finish the proof.

In a private conversation with Karagila, the author came to know that they independently observed the same fact. We note that there was a gap in the above observation. Specifically, the author was not aware of the fact that every symmetric system is equivalent to a tenacious system. Karagila fixed this gap. In particular, in \cite[\textbf{Lemma 3.3}]{Kar2019}, Karagila wrote that the natural assumption that $\langle\mathbb{P},\mathcal{G},\mathcal{F}\rangle$ is a tenacious system is also required in the proof. 

\begin{lem}{(\textbf{Karagila}; \cite[\textbf{Lemma 3.3}]{Kar2019})}
{\em Let $V$ be a model of $\mathsf{ZFC}$. If $\mathbb{P}$ has the $\kappa$-c.c. and $\mathcal{F}$ is $\kappa$-complete, then $\mathsf{DC_{<\kappa}}$ is preserved in the symmetric extension of $V$ with respect to the symmetric system $\langle \mathbb{P},\mathcal{G},\mathcal{F}\rangle$.}
\end{lem}

We slightly generalize \cite[\textbf{Lemma 1}]{Kar2014} and prove \textbf{Observation 1.1}.

\begin{lem}{\textbf{(Observation 1.1)}}
{\em Let $V$ be a model of $\mathsf{ZFC}$. If $\mathbb{P}$ is $\kappa$-distributive and $\mathcal{F}$ is $\kappa$-complete, then $\mathsf{DC_{<\kappa}}$ is preserved in the symmetric extension of $V$ with respect to the symmetric system $\langle \mathbb{P},\mathcal{G},\mathcal{F}\rangle$.}
\end{lem}

\begin{proof}
Let $G$ be a $\mathbb{P}$-generic filter over $V$. Let $\delta<\kappa$. We show that $\mathsf{DC_{\delta}}$ holds in $V(G)$. Let $X$ and $R$ be elements of $V(G)$ as in the assumptions of $\mathsf{DC_{\delta}}$. Since $\mathsf{AC}$ is equivalent to $\forall\kappa$($\mathsf{DC_{\kappa}}$) and $V[G]$ is a model of $\mathsf{AC}$, using $\forall\kappa$($\mathsf{DC_{\kappa}}$) in $V[G]$, we can find an $f:\delta\rightarrow X$ in $V[G]$ such that $f\restriction \alpha$ $R$ $f(\alpha)$ for all $\alpha < \delta$. We show that this $f$ is in $V(G)$.
Let $p_{0}\Vdash \dot{f}$ is a function whose domain is $\delta$ and range is $X$ which is a subset of $V(G)$. 
For each $\alpha<\delta$,  $D_{\alpha}=\{p\leq p_{0}: (\exists x\in X) p\Vdash \dot{f}(\check{\alpha})=\dot{x}$ where $\dot{x}\in HS \}$ is open dense below $p_{0}$. Consequently by $\kappa$-distributivity of $\mathbb{P}$, $D=\bigcap_{\alpha<\delta} D_{\alpha}$ is dense below $p_{0}$. So, there is some $p\in D\cap G$. We can see that for each $\alpha<\delta$, there is an $x_{\alpha}$ such that $p\Vdash \dot{f}(\check{\alpha})=\dot{x}_{\alpha}$ where $\dot{x}_{\alpha}\in HS$. Define $\dot{g}=\{\langle\check{\alpha},\dot{x}_{\alpha}\rangle:\alpha<\delta\}$. Now, since each $\dot{x}_{\alpha}\in HS$, $sym(\dot{x}_{\alpha})\in \mathcal{F}$. By $\kappa$-completeness of $\mathcal{F}$,
$H=\bigcap_{\alpha<\delta}sym(\dot{x}_{\alpha})\in \mathcal{F}$. Next, since $H$ is a subgroup of $sym{(\dot{g})}$
and $\mathcal{F}$ is a filter, $\dot{g}\in HS$. We can see that $p\Vdash \dot{g}=\dot{f}$. Thus, there is a dense open set of conditions $q\leq p_{0}$, such that for some $\dot{g}\in HS$, $q\Vdash\dot{g}=\dot{f}$. By genericity, $\dot{f}^{G}=f\in V(G)$.
\end{proof}

\begin{remark}If $\kappa$ is either a supercompact cardinal or a strongly compact cardinal and $\lambda>\kappa$ is a regular cardinal, there are certain forcing notions like supercompact Prikry forcing \cite{Apt1985} and strongly compact Prikry forcing \cite{AH1991} which are known to be non-$\kappa$-distributive, but still can preserve $\mathsf{DC_{\kappa}}$ in the symmetric extension based on such forcings.
In particular, Apter communicated to us that, assuming the consistency of a $2^{\lambda}$-supercompact cardinal $\kappa$ and a regular cardinal $\lambda>\kappa$, Kofkoulis proved in \cite{Kof1990}, that in a symmetric extension based on supercompact Prikry forcing, $\mathsf{DC_{\kappa}}$ was preserved. In particular, $\mathsf{DC_{\kappa}}$ holds in the symmetric inner model constructed in \cite[\textbf{Theorem 1}]{Apt1985}. Further applying the methods of Kofkoulis, assuming the consistency of a $\lambda$-strongly compact cardinal $\kappa$ and a measurable cardinal $\lambda>\kappa$, a symmetric extension based on strongly compact Prikry forcing was constructed in \cite{AH1991} where $\kappa$ became a singular cardinal of cofinality $\omega$, $\kappa^{+}$ remained a measurable cardinal and $\mathsf{DC_{\kappa}}$ was preserved. We can also find another exhibition of Kofkoulis's method with certain modifications in \cite{AM1995}.
\end{remark}

Next, we prove \textbf{Observation 1.2}.

\begin{lem}{\textbf{(Observation 1.2)}}
{\em Let $\delta<\kappa$ and $V$ be a model of $\mathsf{ZF+DC_{\delta}}$. If $\mathbb{P}$ is ($\delta+1$)-strategically closed and $\mathcal{F}$ is $\kappa$-complete, then $\mathsf{DC_{\delta}}$ is preserved in the symmetric extension of $V$ with respect to the symmetric system $\langle \mathbb{P},\mathcal{G},\mathcal{F}\rangle$. }
\end{lem}

\begin{proof}
Let $\delta<\kappa$. Let $G$ be a $\mathbb{P}$-generic filter over $V$. By \cite[\textbf{Theorem 2.2}]{GJ2014}, $\mathsf{DC_{\delta}}$ is preserved in $V[G]$ since $\mathbb{P}$ is ($\delta+1$)-strategically closed. We show that $\mathsf{DC_{\delta}}$ holds in $V(G)$. Let $X$ and $R$ be elements of $V(G)$ as in the assumptions of $\mathsf{DC_{\delta}}$. Since $\mathsf{DC_{\delta}}$ is preserved in $V[G]$, we can find an $f:\delta\rightarrow X$ in $V[G]$ such that $f\restriction \alpha$ $R$ $f(\alpha)$ for all $\alpha < \delta$. We show that this $f$ is in $V(G)$.
Let $p\Vdash \dot{f}$ is a function whose domain is $\delta$ and its range is a subset of $V(G)$.  
Consider a game of length $\delta+1$, between two players I and II who play at odd stages and even stages respectively such that initially II chooses a trivial condition and I chooses a condition $p_{0}$ extending $p$ such that $p_{0}\Vdash \dot{f}(\check{0})=\dot{t}_{0}$ where $\dot{t}_{0}$ is in $HS$, and at non-limit even stages 2$\alpha>0$, II chooses a condition $p_{\alpha}$ extending the condition of the previous stage such that $p_{\alpha}\Vdash \dot{f}(\check{\alpha})=\dot{t}_{\alpha}$ where $\dot{t}_{\alpha}$ is in $HS$. By ($\delta+1$)-strategic closure of $\mathbb{P}$, II has winning strategy. Thus, we can we can extend
$p$ to $p_{0} \geq p_{1} \geq$ ··· $\geq p_{\alpha}$ $\geq$ ···$\geq p_{\delta}$ such that $p_{\alpha}\Vdash \dot{f}(\check{\alpha})=\dot{t}_{\alpha}$ where $\dot{t}_{\alpha}$ is in $HS$ for each $\alpha<\delta$. It is enough to show that $\dot{f}=\{\langle \check{\beta},\dot{t}_{\beta}\rangle: \beta<\delta\}$ is in $HS$ which follows using $\kappa$-completeness of $\mathcal{F}$ as done in \cite[\textbf{Lemma 1}]{Kar2014}.
\end{proof}

\begin{remark}
We note that we are using the  definition of a $(\delta+1)$-strategically closed  forcing notion
from \cite[\textbf{Definition 5.15}]{Cum2010}
which is different from the definition used in \cite{GJ2014}. In particular, in our case a forcing  notion is $\delta$-strategically closed if in the two person game in which the players construct a descending sequence $\langle p_{\alpha} : \alpha < \delta\rangle$, where player I plays odd stages and player II plays even and limit stages
(choosing the trivial condition at stage 0), player II has a strategy which ensures the game
can always be continued; a forcing  notion is  ($\delta+1$)-strategically closed if the corresponding game has length $\delta+1$. 
Whereas Gitman and Johnstone defined that a forcing  notion is $\leq\delta$-strategically closed if in the game of ordinal length $\delta+1$ in which two players alternatively select conditions from it to construct a descending
$\delta+1$-sequence with the second player playing at even and limit stages, the second player has a
strategy that allows her to always continue playing (cf. \cite[the paragraph before \textbf{Theorem 2.2}]{GJ2014}).
Thus in our case if $V$ is a model of $\mathsf{ZF+DC_{\delta}}$ and $\mathbb{P}$ is ($\delta+1$)-strategically closed then $\mathsf{DC_{\delta}}$ is preserved in $V[G]$ by \cite[\textbf{Theorem 2.2}]{GJ2014}.
\end{remark}

\begin{remark}
Let $V$ be a model of $\mathsf{ZF +DC_{\kappa}}$. Suppose $\mathbb{P}$ is well-orderable of order type at most $\kappa$ and has the $\kappa$-c.c.  property. We remark that if $\mathcal{F}$ is $\kappa$-complete, then $\mathsf{DC_{<\kappa}}$ is preserved in the symmetric extension of $V$ in terms of the symmetric system $\langle \mathbb{P},\mathcal{G},\mathcal{F}\rangle$.
Let $G$ be a $\mathbb{P}$-generic filter over $V$.
By \cite[\textbf{Theorem 2.1}]{GJ2014}, $\mathsf{DC_{\kappa}}$ is preserved in $V[G]$. The rest follows from the proof of \cite[\textbf{Lemma 3.3}]{Kar2019}.
\end{remark}

\begin{question}
{\em Let V be a model of $\mathsf{ZF +DC_{\kappa}}$. Suppose that $\mathbb{P}$ is $\kappa$-distributive. Can we preserve $\mathsf{DC_{\kappa}}$ in every forcing extension $V[G]$ by $\mathbb{P}$?}
\end{question}

If the answer is in the affirmative, then we can say that if $V$ is a model of $\mathsf{ZF +DC_{\kappa}}$, $\mathbb{P}$ is $\kappa$-distributive, and $\mathcal{F}$ is $\kappa$-complete, then $\mathsf{DC_{<\kappa}}$ is preserved in the symmetric extension in terms of the symmetric system $\langle \mathbb{P},\mathcal{G},\mathcal{F}\rangle$ following \textbf{Lemma 3.2}.

\subsection{Number of normal measures a successor cardinal can carry and Dependent Choice} 

Takeuti \cite{Tak1970} and  Jech \cite{Jec1968} independently proved that
if we assume the consistency of ``$\mathsf{ZFC}$ + there is a measurable cardinal" then the theory ``$\mathsf{ZF+DC}$ + $\aleph_{1}$ is a measurable cardinal'' is consistent. 
Dimitriou \cite[\textbf{section 1.33}]{Dim2011} modified Jech's construction and proved that if we assume the consistency of ``$\mathsf{ZFC}$ + there is a measurable cardinal $\kappa$ and $\gamma<\kappa$ is a regular cardinal'' then the theory ``$\mathsf{ZF}$ + the cardinality of $\gamma$ is preserved + $\gamma^{+}$ is a measurable cardinal'' is consistent. Apter, Dimitriou, and Koepke \cite{ADK2014} constructed symmetric models in which for an arbitrary ordinal $\rho$, $\aleph_{\rho+1}$ can be the least measurable as well as the least regular uncountable cardinal. 
Bilinsky and Gitik \cite{BG2012} proved that if we assume the consistency of ``$\mathsf{ZFC+GCH}$ + there is a measurable cardinal $\kappa$'' then we can obtain a symmetric extension where $\kappa$ is a measurable cardinal without a normal measure. 
Assuming the consistency of {``$\mathsf{ZFC+GCH}$ + there is a measurable cardinal"}, we can construct models of $\mathsf{ZF+DC}$ where successor of regular cardinals like $\aleph_1$, $\aleph_{2}$, $\aleph_{\omega+2}$, as well as $\aleph_{\omega_{1}+2}$, can carry an arbitrary (non-zero) number of normal measures. 

Friedman--Magidor \cite[\textbf{Theorem 1}]{FM2009} proved that a measurable cardinal can be forced to carry arbitrary number of normal measures in $\mathsf{ZFC}$. 
\begin{lem}{(\textbf{Friedman and Magidor}; \cite[\textbf{Theorem 1}]{FM2009})} {\em Assume $\mathsf{GCH}$. Suppose that $\kappa$ is a measurable cardinal and let $\alpha$ be a cardinal at most $\kappa^{++}$. Then in a cofinality-preserving forcing extension, $\kappa$ carries exactly $\alpha$ normal measures.}\end{lem}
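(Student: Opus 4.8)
The plan is to follow the strategy of Friedman and Magidor: working in a model of $\mathrm{ZFC} + \mathrm{GCH}$ with $\kappa$ measurable, one designs a single cofinality-preserving forcing $\mathbb{P}$ that simultaneously creates at least $\alpha$ normal measures on $\kappa$ and, crucially, blocks the appearance of any further ones. First I would observe that GCH makes the bound $\kappa^{++}$ optimal: there are at most $2^{2^{\kappa}} = 2^{\kappa^{+}} = \kappa^{++}$ subsets of $\mathcal{P}(\kappa)$, so $\kappa$ can carry at most $\kappa^{++}$ normal measures, and it therefore suffices to realise each value $\alpha \leq \kappa^{++}$ exactly. I would take $\mathbb{P}$ to be a reverse Easton iteration that is $<\kappa$-directed closed below $\kappa$, with a stage-$\kappa$ factor given by a carefully chosen sealing forcing (a Sacks-style tree forcing on $\kappa$ together with a coding of the generic into the continuum pattern or into a $\Diamond_{\kappa}$-type sequence). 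The directed closure below $\kappa$, combined with a $\kappa^{+}$-c.c. analysis of the tail (which holds under GCH and uses the $\kappa^{+}$-c.c.\ of the generalised Sacks forcing), preserves all cardinals and cofinalities, which is exactly the cofinality-preserving clause demanded by the statement.

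Next I would establish the lower bound by lifting embeddings. Fix in $V$ a normal measure $U$ on $\kappa$ with ultrapower embedding $j : V \to M$. Using GCH and the $\kappa$-closure of $M$ one constructs a master condition below $j(\mathbb{P})$, and for each member of a prescribed family of $\alpha$ mutually independent generic objects one lifts $j$ to some $j^{\ast} : V[G] \to M[j(G)]$. Each lift yields a normal measure $U^{\ast} = \{ X \subseteq \kappa : \kappa \in j^{\ast}(X) \}$ on $\kappa$ in $V[G]$, and by arranging the $\alpha$ generic objects to be pairwise distinguishable one checks that the resulting $U^{\ast}$ are pairwise distinct. This is the routine half: producing many measures amounts to packing enough independent generic data at the top of the iteration.

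The hard part will be the upper bound, namely showing that $V[G]$ has \emph{no} normal measures beyond the $\alpha$ just built. Here I would exploit the minimality and fusion properties of the Sacks-style sealing forcing: any normal measure $W$ in $V[G]$ induces an embedding $j_{W}$ whose restriction to $V$ must, by the coding, arise from a ground-model embedding, and the branch it reads off at $\kappa$ is pinned down by minimality to be one of the finitely many branches encoded by $\mathbb{P}$. Concretely, one argues that $W$ contains the coding set, that this determines the relevant tail generic up to the prescribed $\alpha$-fold ambiguity, and hence that $W$ coincides with one of the measures $U^{\ast}$. Making this capturing argument precise, so that the coding is rigid enough that no extra measure can slip through while the sealing forcing remains benign enough to preserve both measurability and cofinalities, is the technical heart of the theorem and is precisely where the fusion machinery of Friedman and Magidor does the real work.
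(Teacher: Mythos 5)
The paper gives no proof of this lemma at all: it is imported verbatim as Theorem~1 of Friedman--Magidor (\cite{Mag2009}) and used as a black box in Theorem~4.6, Theorem~4.7 and Corollary~5.6. So there is no argument in the paper to compare yours against; the only meaningful comparison is with the original Friedman--Magidor proof. Measured against that, your outline is a faithful high-level description of the actual strategy: a reverse Easton iteration of generalised (tree/Sacks-style) forcings below and at $\kappa$, cardinal and cofinality preservation via directed closure plus a $\kappa^{+}$-c.c.\ (really a fusion) analysis, the counting bound $2^{2^{\kappa}}=\kappa^{++}$ under GCH, lifting of the ultrapower embedding to manufacture $\alpha$ many pairwise distinct normal measures, and minimality/coding to rule out any others.

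The gap is that, as written, the proposal proves nothing beyond the easy counting bound: the entire content of the theorem is the upper bound (``no normal measures other than the $\alpha$ constructed ones''), and your paragraph on it is a description of what must be shown rather than an argument. To close it you would need the actual machinery: the fusion lemmas for $\kappa$-Sacks forcing (which is what really gives preservation of cardinals and of measurability, not a chain-condition argument alone), the ``tuning fork''/minimality analysis showing that any normal measure $W$ in $V[G]$ gives an embedding whose restriction to $V$ is an iterate of the ground-model measure and whose generic over $j(\mathbb{P})$ is completely determined by the coded branches, and the precise coding that creates exactly an $\alpha$-fold ambiguity at stage $\kappa$. One small slip: the branch read off at $\kappa$ is pinned down to one of $\alpha$ many possibilities, not ``finitely many'' (you correct this implicitly a sentence later). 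None of this makes the approach wrong --- it is the only known approach --- but the proposal is an annotated table of contents for the Friedman--Magidor paper rather than a proof, which is consistent with the fact that the paper under review also treats the result as citable rather than reprovable.
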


We recall the definition of a {\em symmetric collapse} from \cite{HK2019}.

\begin{defn}{(\textbf{Symmetric Collapse}; \cite[\textbf{Definition 4.1}]{HK2019})} Let $\kappa\leq\lambda$ be two infinite cardinals. The {\em symmetric collapse} is the symmetric system $\langle \mathbb{P},\mathcal{G},\mathcal{F}\rangle$ defined as follows.
\begin{itemize}
\item $\mathbb{P}=Col(\kappa,<\lambda)$, so a condition in $\mathbb{P}$ is a partial function $p$ with domain
$\{\langle \alpha, \beta\rangle : \kappa < \alpha < \lambda, \beta < \kappa\}$ such that $p(\alpha,\beta) < \alpha$ for all $\alpha$ and $\beta$,
supp($p$) = $\{\alpha < \lambda : \exists\beta, \langle \alpha, \beta\rangle \in$ dom $p\}$ is bounded below $\lambda$ and $\vert p\vert < \kappa$.
\item $\mathcal{G}$ is the group of automorphisms $\pi$ such that there is a sequence of permutations $\overrightarrow{\pi}=\langle\pi_{\alpha}:\kappa<\alpha<\lambda \rangle$ such that $\pi_{\alpha}$ is a permutation of $\alpha$ satisfying $\pi p(\alpha,\beta)=\pi_{\alpha}(p(\alpha,\beta))$.
\item $\mathcal{F}$ is the normal filter of subgroups generated by fix$(E)$ for bounded $E\subseteq\lambda$, where fix$(E)$ is the group $\{\pi: \forall\alpha\in E, \pi p(\alpha,\beta)=p(\alpha,\beta)\}$.\end{itemize}
\end{defn}

\begin{lem}{\em Let $\kappa\leq\lambda$ be two infinite cardinals such that $cf(\lambda)\geq\kappa$ and $\langle \mathbb{P},\mathcal{G},\mathcal{F}\rangle$ is the symmetric collapse where $\mathbb{P}=Col(\kappa,<\lambda)$. Then, $\mathcal{F}$ is $\kappa$-complete.}\end{lem}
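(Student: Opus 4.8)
The plan is to use the standard criterion that a filter of subgroups is $\kappa$-complete precisely when it is closed under intersections of fewer than $\kappa$ of its members, and then to reduce this to the generators $\mathrm{fix}(E)$ for bounded $E\subseteq\lambda$. First I would record the basic algebraic fact that these generators behave well under intersection: for any family $\langle E_i : i<\gamma\rangle$ of subsets of $\lambda$ one has $\bigcap_{i<\gamma}\mathrm{fix}(E_i)=\mathrm{fix}(\bigcup_{i<\gamma}E_i)$. Indeed, $\pi\in\mathrm{fix}(E)$ holds exactly when $\pi p(\alpha,\beta)=p(\alpha,\beta)$ (equivalently $\pi_\alpha$ is the identity) for every $\alpha\in E$, and demanding this simultaneously for each $E_i$ is the same as demanding it for every coordinate in the union. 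Moreover, conjugation by any $\sigma\in\mathcal{G}$ acts within each coordinate and therefore preserves $\mathrm{fix}(E)$, so each $\mathrm{fix}(E)$ is a normal subgroup of $\mathcal{G}$. Consequently $\{\mathrm{fix}(E):E\subseteq\lambda\text{ bounded}\}$ is a filter base closed under finite intersection and conjugation, and the normal filter $\mathcal{F}$ it generates is exactly $\{H\le\mathcal{G}:\mathrm{fix}(E)\subseteq H\text{ for some bounded }E\subseteq\lambda\}$.

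Next I would verify completeness directly. Fix $\gamma<\kappa$ and subgroups $\langle H_i:i<\gamma\rangle$ in $\mathcal{F}$; by the description of $\mathcal{F}$ choose bounded $E_i\subseteq\lambda$ with $\mathrm{fix}(E_i)\subseteq H_i$. Then, using the identity above, $\bigcap_{i<\gamma}H_i\supseteq\bigcap_{i<\gamma}\mathrm{fix}(E_i)=\mathrm{fix}(\bigcup_{i<\gamma}E_i)$. Hence it suffices to show that $E:=\bigcup_{i<\gamma}E_i$ is bounded in $\lambda$: once this is known, $\mathrm{fix}(E)$ is a generator, so $\mathrm{fix}(E)\in\mathcal{F}$, and upward closure of the filter gives $\bigcap_{i<\gamma}H_i\in\mathcal{F}$, which is what we want.

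The one place where care is needed, and the crux of the lemma, is the boundedness of $E$. Since each $E_i$ is bounded we have $\sup E_i<\lambda$, so $\sup E=\sup_{i<\gamma}(\sup E_i)$ is a supremum of $\gamma$ many ordinals below $\lambda$. Here I would invoke that $\lambda$ is regular (the standing hypothesis on the symmetric collapse $Col(\kappa,<\lambda)$), whence $cf(\lambda)=\lambda$ and $\gamma<\kappa<\lambda=cf(\lambda)$; therefore $\sup E<\lambda$ and $E$ is bounded. This is exactly where the bound $\kappa$ (rather than a larger cardinal) comes from, namely the inequality $\gamma<\kappa\le cf(\lambda)$; if one had $cf(\lambda)<\kappa$, a cofinal family of $cf(\lambda)$ bounded sets would witness failure of $\kappa$-completeness, so the regularity of $\lambda$ is genuinely used. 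Assembling these three steps shows that $\mathcal{F}$ is $\kappa$-complete.
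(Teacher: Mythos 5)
Your proof is correct and follows essentially the same route as the paper's: choose bounded generators $\mathrm{fix}(E_\beta)\subseteq K_\beta$, use $\bigcap_\beta\mathrm{fix}(E_\beta)\supseteq\mathrm{fix}(\bigcup_\beta E_\beta)$, and conclude by upward closure. The only difference is that you explicitly justify why $\bigcup_{\beta<\gamma}E_\beta$ is bounded in $\lambda$ (via $\gamma<\kappa\le\mathrm{cf}(\lambda)$, which holds in the paper's application where $\lambda$ is measurable), a step the paper asserts without comment; this is a worthwhile clarification since the lemma as literally stated for arbitrary infinite $\kappa<\lambda$ would fail when $\mathrm{cf}(\lambda)<\kappa$, exactly as you observe.
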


\begin{proof}Fix  $\gamma<\kappa$ and let, for each $\beta<\gamma$, $K_{\beta}\in \mathcal{F}$. There must be bounded $E_{\beta}\subseteq\lambda$ for each $\beta<\gamma$ such that fix$E_{\beta}\subseteq K_{\beta}$. Next, fix($\bigcup_{\beta<\gamma}E_{\beta})\subseteq\bigcap_{\beta<\gamma}$ fix$E_{\beta}\subseteq \bigcap_{\beta<\gamma} K_{\beta}$. Since $cf(\lambda)\geq\kappa$, $\bigcup_{\beta<\gamma}E_{\beta}$ is a bounded subset of $\lambda$. Consequently, $\bigcap_{\beta<\gamma} K_{\beta}\in \mathcal{F}$.\end{proof}

We observe that after a symmetric collapse, the successor of a regular cardinal can be a measurable cardinal carrying an arbitrary (non-zero) number of normal measures assuming the consistency of a measurable cardinal. Further we can preserve Dependent Choice in certain cases.

\begin{thm}
{\em Let $V$ be a model of $\mathsf{ZFC+GCH}$ with a measurable cardinal $\kappa$. Let $\lambda$ be any non-zero cardinal at most $\kappa^{++}$ and let $\eta\leq\kappa$ be regular. Then, there is a symmetric extension where $\kappa=\eta^{+}$ is a measurable cardinal carrying $\lambda$ normal measures. Moreover, $\mathsf{AC_{\kappa}}$ fails and $\mathsf{DC_{<\eta}}$ holds\footnote{If we assume $\eta>\omega$.} in the symmetric model}.\end{thm}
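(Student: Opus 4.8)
The plan is to assemble the result from the machinery already developed: the symmetric collapse of Definition 3.16, Lemma 3.17 establishing $\kappa$-completeness of the filter $\mathcal{F}$, the Friedman--Magidor forcing of Lemma 3.14 for adjusting the number of normal measures, and the dependent-choice preservation Lemma 3.3 (Lemma 1.2 / Observation here). First I would set up the ground model: starting from $V \models \mathrm{ZFC} + \mathrm{GCH}$ with $\kappa$ measurable and a fixed regular $\eta < \kappa$, I would apply Lemma 3.14 to obtain a cofinality-preserving forcing extension $V_1 = V[H]$ in which $\kappa$ carries exactly $\lambda$ normal measures (for the given $\lambda \le \kappa^{++}$), while GCH and measurability of $\kappa$ are retained. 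Since Lemma 3.14's forcing is cofinality-preserving and small relative to later collapses, this sets the stage without disturbing the cardinals below $\kappa$ that we care about.

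Next I would perform the symmetric collapse $\langle \mathbb{P}, \mathcal{G}, \mathcal{F}\rangle$ of Definition 3.16 with $\mathbb{P} = Col(\eta, {<}\kappa)$, collapsing every cardinal strictly between $\eta$ and $\kappa$ down to $\eta$, so that in the symmetric extension $V_1(G)$ the cardinal $\kappa$ becomes $\eta^{+}$. The symmetric extension is a model of ZF by Theorem 3.11. The key structural inputs are: (i) $\mathbb{P} = Col(\eta,{<}\kappa)$ has $\kappa$-c.c., and (ii) by Lemma 3.17, $\mathcal{F}$ is $\kappa$-complete --- but for $DC_{<\eta}$ I only need $\eta$-completeness, which follows a fortiori since $\eta < \kappa$. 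By the Observation (Lemma 3.3), since $\mathbb{P}$ has $\kappa$-c.c. hence $\eta$-c.c., and $\mathcal{F}$ is $\eta$-complete, $DC_{<\eta}$ is preserved in $V_1(G)$, giving the dependent-choice clause (here I use $\eta > \omega$ so that the statement is nonvacuous). For the failure of $AC_{\kappa}$, I would invoke Lemma 3.13: once measurability of $\kappa = \eta^{+}$ (with a normal measure) is established in the symmetric model, $AC_{\alpha^{+}}(\mathcal{P}(\alpha))$ fails with $\alpha = \eta$, so $AC_{\kappa}$ fails.

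The substantive step, and the one I expect to be the main obstacle, is showing that $\kappa$ remains measurable carrying exactly $\lambda$ normal measures \emph{inside the symmetric extension} $V_1(G)$, rather than merely in the full generic extension $V_1[G]$. The collapse $Col(\eta,{<}\kappa)$ is not small relative to $\kappa$, so Levy--Solovay (Theorem 3.9) does not directly apply; instead I would argue that the weak homogeneity of the collapse and the structure of the normal filter $\mathcal{F}$ let me transfer the normal measures from $V_1$ (or from suitable intermediate models where $AC$ holds) into $V_1(G)$. Concretely, each normal measure in $V_1$ should generate a $\kappa$-complete normal ultrafilter in the symmetric model via the approximation of $V_1(G)$ by intermediate $AC$-models, using that any subset of $\kappa$ in $V_1(G)$ has a hereditarily symmetric name supported on a bounded $E \subseteq \kappa$, hence is captured by a sub-collapse $Col(\eta, {<}\gamma)$ for some $\gamma < \kappa$, which \emph{is} small with respect to $\kappa$. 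I would then verify that distinct ground-model normal measures yield distinct symmetric-model normal measures and that no new ones appear, so the count stays exactly $\lambda$; establishing both the injectivity and the surjectivity of this correspondence between $V_1$-measures and $V_1(G)$-measures is where the careful work lies.
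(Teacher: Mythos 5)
Your overall architecture matches the paper's proof exactly: first apply the Friedman--Magidor theorem (the paper's Lemma 4.3) to pass to a cofinality-preserving extension where $\kappa$ carries exactly $\lambda$ normal measures, then take the symmetric collapse with $\mathbb{P}=Col(\eta,<\kappa)$, get $AC_\kappa$ to fail from $\kappa=\eta^+$ being measurable (the paper's Lemma 3.9), and transfer measurability and the measure count into the symmetric model via the fact that every subset of $\kappa$ there is captured by a bounded piece of the collapse, which is small forcing. For that last transfer the paper simply cites Lemmas 2.4 and 2.5 of Apter's \emph{Some Remarks on Normal Measures and Measurable Cardinals}; your sketch of how it should go is the right idea, but you explicitly leave the injectivity/surjectivity of the correspondence between ground-model and symmetric-model normal measures unproved, which is precisely the content of the cited lemmas.

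There is, however, one step in your argument that is genuinely wrong as written: you claim that since $\mathbb{P}=Col(\eta,<\kappa)$ has the $\kappa$-c.c.\ it ``hence'' has the $\eta$-c.c. This implication goes the wrong way. The $\gamma$-chain condition asserts that all antichains have size \emph{less than} $\gamma$, so it becomes \emph{stronger} as $\gamma$ decreases; $\kappa$-c.c.\ does not imply $\eta$-c.c.\ for $\eta<\kappa$. Moreover $Col(\eta,<\kappa)$ really does have antichains of every size below $\kappa$ (for instance, the conditions $p_\beta=\{((\gamma,0),\beta)\}$ for $\beta<\gamma$ are pairwise incompatible), so it is emphatically not $\eta$-c.c., and the chain-condition version of the preservation lemma cannot be invoked at $\eta$. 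The conclusion $DC_{<\eta}$ is still correct, but the paper obtains it differently: $Col(\eta,<\kappa)$ is $\eta$-closed, the filter of the symmetric collapse is $\eta$-complete by the paper's Lemma 4.5, and then Karagila's original closure-based lemma (Lemma 1 of \cite{Asaf2014}) applies. You should replace your chain-condition argument with this closure argument. (Incidentally, the reason the paper's second variant, Theorem 4.7, upgrades the conclusion to $DC_{<\kappa}$ is exactly that it changes the filter to one that is $\kappa$-complete and then uses the $\kappa$-c.c.\ of the collapse at the level of $\kappa$, where the chain condition genuinely holds.)
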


\begin{proof} Applying \textbf{Lemma 3.8}, we obtain a cofinality-preserving forcing extension $V'$ of $V$ where $\kappa$ is a measurable cardinal with $\lambda$ many normal measures. Let $V'(G)$ be the symmetric extension of $V'$ obtained by the symmetric collapse $\langle \mathbb{P},\mathcal{G},\mathcal{F}\rangle$ where $\mathbb{P}=Col(\eta,<\kappa)$ and $G$ is a $\mathbb{P}$-generic filter over $V'$. In $V'(G)$, $\kappa=\eta^{+}$. We can also have the following in $V'(G)$.
\begin{itemize}
\item By \cite[\textbf{Lemmas 2.4, 2.5}]{Apt2001}, $\kappa$ remains a measurable cardinal with $\lambda$ many normal measures. \item Since $\kappa$ is a successor as well as a measurable cardinal, $\mathsf{AC_{\kappa}}$ fails using \textbf{Lemma 2.20}.
\item We note that $\mathcal{F}$ is $\eta$-complete by \textbf{Lemma 3.10}.
Since $\mathbb{P}$ is $\eta$-closed, $\mathsf{DC_{<\eta}}$  holds using \cite[\textbf{Lemma 1}]{Kar2014}. \end{itemize}\end{proof}

\begin{remark}
The referee pointed out that $\mathsf{DC_{<\kappa}}$ is preserved in $V'(G)$. Assuming that $\lambda$ is regular, the proof of \textbf{Lemma 3.10} gives that $\mathcal{F}$ is $\lambda$-complete. Consequently, since $\kappa$ is a regular cardinal in $V'$, $\mathcal{F}$ is $\kappa$-complete. Since $\mathbb{P}$ has the $\kappa$-c.c., by \textbf{Lemma 3.1}, $\mathsf{DC_{<\kappa}}$ is preserved in $V'(G)$.   
\end{remark}

\begin{remark}
In \cite[\textbf{Theorem 1}]{Apt2001},
starting with a model of $``\mathsf{ZFC}+ \mathsf{GCH}+ o(\kappa)=\delta^{*}"$ for $\delta^{*}\leq\kappa^{+}$ any finite or infinite cardinal, Apter constructed a model of $\mathsf{ZF}+\mathsf{DC_{<\kappa}}$ where $\kappa$ carries exactly $\delta^{*}$ normal measures and $2^{\delta} = \delta^{++}$ on a set having measure one with respect to every
normal measure over $\kappa$. We observe that we can obtain the result of  \cite[\textbf{Theorem 1}]{Apt2001} starting from just one measurable cardinal $\kappa$ if we use \textbf{Lemma 3.8} instead of passing to an inner model of Mitchell from \cite{Mit1974} as done in the proof of  \cite[\textbf{Theorem 1}]{Apt2001}. In particular, we can prove the following.
\end{remark}

\begin{corr}{\textbf{(of \cite[\textbf{Theorem 1}]{Apt2001})}}
{\em Let $V$ be a model of $\mathsf{ZFC + GCH}$ with a measurable cardinal $\kappa$ and let $\lambda$ be a cardinal at most $\kappa^{++}$. Then there is a model of $\mathsf{ZF+\mathsf{DC_{<\kappa}}}$ where $\kappa$ is a measurable cardinal carrying $\lambda$ many normal measures $\langle \mathcal{U}_{\alpha}^{*}: \alpha<\lambda\rangle$. Moreover, we have $2^{\delta} = \delta^{++}$ on a set having measure one with respect to  any of
the measures $\mathcal{U}_{\alpha}^{*}$.}
\end{corr}

\section{The Proof of Theorem 1.5} 
In this section we prove \textbf{Theorem 1.5}. 

\begin{proof}{\textbf{(of Theorem 1.5)}}
Firstly, we give a description of the symmetric extension constructed in \cite[\textbf{Theorem 4.1}]{Kar2019} as follows.

\begin{enumerate}
\item \textbf{The ground model ($V$).}
At the beginning of the proof of \cite[\textbf{Theorem 3}]{Apt2012}, from the given requirements, Apter constructed a model $V$ where
there is an enumeration $\langle\kappa_{i} : i <\kappa\rangle$ of $C\cup \{\omega\}$ where  $C\subseteq\kappa$ is a club of inaccessible and limit cardinals below a supercompact cardinal $\kappa$ such that $2^{\kappa_{i}} = \kappa_{i}^{++}$ holds. We consider $V$ to be our ground model. For reader's convenience we recall the steps from the proof of \cite[\textbf{Theorem 3}]{Apt2012} as follows.

\begin{itemize}
    \item Let $V$ be a model of $\mathsf{ZFC + GCH}$ with a supercompact cardinal $\kappa$. 
    \item Let $\mathbb{Q}_{1}$ be Laver’s partial ordering of \cite{Lav1978} which makes $\kappa$’s supercompactness indestructible under $\kappa$-directed closed forcing. Since $\mathbb{Q}_{1}$ may be defined so that $\vert\mathbb{Q}_{1}\vert=\kappa$, we have $V^{\mathbb{Q}_{1}* \dot{Add}(\kappa,\kappa^{++})}$= $V_{2}$ is a model of ``$\mathsf{ZFC}$ + $\kappa$ is supercompact + $2^{\kappa} = \kappa^{++}$ + $2^{\delta} = \delta^{+}$ for every cardinal $\delta \geq \kappa^{+}$".
    \item Let $\mathbb{Q}_{3}\in V_{2}$ be the Radin forcing defined over $\kappa$. Taking a suitable measure sequence will enable one to preserve the supercompactness of $\kappa$ (cf. \cite{Git2010}). Consequently, $V_{2}^{\mathbb{Q}_{3}}= \overline{V}$ is a model of ``$\mathsf{ZFC}$ + $\kappa$ is supercompact + $2^{\kappa} = \kappa^{++}$ + $2^{\delta} = \delta^{+}$ for every cardinal $\delta \geq \kappa^{+}$ + There is a club $C \subseteq \kappa$ composed of inaccessible cardinals and their limits with $2^{\delta} = 2^{\delta^{+}}=\delta^{++}$ for every $\delta\in C$".
    \item For the sake of convenience we consider the ground model to be $\overline{V} = V$. Let $\langle \kappa_{i}: i<\kappa\rangle\in V$ be the continuous,
    increasing enumeration of $C\cup \{\omega\}$.
\end{itemize}

\item \textbf{Defining the symmetric system $\langle\mathbb{P},\mathcal{G},\mathcal{F}\rangle$.}  
\begin{itemize}
\item Let $\mathbb{P}$ be the Easton support product of  $\mathbb{P}_{\alpha}=Col(\kappa_{\alpha}^{++}, <\kappa_{\alpha+1})$ where $\alpha<\kappa$.
\item Let $\mathcal{G}$ be the Easton support product of the automorphism groups of each $\mathbb{P}_{\alpha}$.
\item Let $\mathcal{F}$ be the filter generated by the groups of the form fix$(\alpha)$ for $\alpha<\kappa$, where 

fix$(\alpha)=\{\pi\in \prod _{\beta<\kappa}Aut(\mathbb{P}_{\beta}): \pi\restriction\alpha = id\}$.
\end{itemize}

\item \textbf{Defining the symmetric extension of $V$.} Let $G$ be a $\mathbb{P}$-generic filter over $V$. We consider the symmetric extension $V(G)^{\mathcal{F}}$ with respect to  the symmetric system $\langle \mathbb{P},\mathcal{G},\mathcal{F}\rangle$ defined above in (2) and denote it by $\mathcal{N}_{1}$ for the sake of our convenience.
\end{enumerate}

Since $\mathcal{F}$ is $\kappa$-complete, and $\mathbb{P}$ has the $\kappa$-c.c., $\mathsf{DC_{<\kappa}}$ is preserved in $\mathcal{N}_{1}$ by \textbf{Lemma 3.1} (cf. \cite[\textbf{section 4.1}]{Kar2019}). Since each $\mathbb{P}_{\alpha}$ is weakly homogeneous, the following lemma holds as a corollary of \textbf{Lemma 2.16}.

\begin{lem}
{\em If $A\in \mathcal{N}_{1}$ is a set of ordinals, then $A\in V[G\restriction\alpha]$ for some $\alpha<\kappa$.}
\end{lem}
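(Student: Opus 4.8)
The plan is to produce, for the given $A$, a bound $\alpha<\kappa$ below which all membership information about $A$ is already decided, and then to read $A$ off inside $V[G\restriction\alpha]$ using only the ground-model forcing relation together with $G\restriction\alpha$. First I would fix a hereditarily symmetric name $\dot{A}$ with $A=\dot{A}^{G}$. Since $\mathrm{sym}^{\mathcal{G}}(\dot{A})\in\mathcal{F}$ and $\mathcal{F}$ is generated by the groups $\mathrm{fix}(\alpha)$, which form a decreasing chain (indeed $\mathrm{fix}(\alpha)\subseteq\mathrm{fix}(\beta)$ for $\beta<\alpha$ and $\mathrm{fix}(\alpha)\cap\mathrm{fix}(\beta)=\mathrm{fix}(\max\{\alpha,\beta\})$, so these groups form a filter base), there is a single $\alpha<\kappa$ with $\mathrm{fix}(\alpha)\subseteq\mathrm{sym}^{\mathcal{G}}(\dot{A})$; that is, every automorphism acting trivially on the coordinates below $\alpha$ fixes $\dot{A}$. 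I factor $\mathbb{P}\cong\mathbb{P}_{<\alpha}\times\mathbb{P}_{\geq\alpha}$, write conditions as pairs $(p,q)$, and correspondingly split $G$ as $G\restriction\alpha$ together with the upper generic.

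The key step is to show that for each ordinal $\gamma$,
$$\gamma\in A\iff \exists p\in G\restriction\alpha\ \text{with}\ (p,1_{\geq\alpha})\Vdash_{\mathbb{P}}\check{\gamma}\in\dot{A},$$
where $1_{\geq\alpha}$ is the maximum condition of $\mathbb{P}_{\geq\alpha}$. The direction $(\Leftarrow)$ is immediate since then $(p,1_{\geq\alpha})\in G$. For $(\Rightarrow)$, pick $(p_{0},q_{0})\in G$ forcing $\check\gamma\in\dot A$; I claim already $(p_{0},1_{\geq\alpha})\Vdash\check\gamma\in\dot A$. If not, some $(p_{1},q_{1})\leq(p_{0},1_{\geq\alpha})$ forces $\check\gamma\notin\dot A$, with $p_{1}\leq p_{0}$. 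Here I use that $\mathbb{P}_{\geq\alpha}$, being an Easton support product of the weakly homogeneous collapses, is weakly homogeneous: choose an automorphism $\sigma$ of $\mathbb{P}_{\geq\alpha}$ with $\sigma(q_{1})$ compatible with $q_{0}$, and extend it by the identity on $\mathbb{P}_{<\alpha}$ to some $\pi\in\mathrm{fix}(\alpha)$. By the Symmetry Lemma together with $\pi\dot A=\dot A$ and $\pi\check\gamma=\check\gamma$, we obtain $(p_{1},\sigma(q_{1}))\Vdash\check\gamma\notin\dot A$. Since $p_{1}\leq p_{0}$ and $\sigma(q_{1})$ is compatible with $q_{0}$, the conditions $(p_{1},\sigma(q_{1}))$ and $(p_{0},q_{0})$ have a common extension, which simultaneously forces $\check\gamma\in\dot A$ and $\check\gamma\notin\dot A$, a contradiction. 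Hence $(p_{0},1_{\geq\alpha})\Vdash\check\gamma\in\dot A$, and as $p_{0}\in G\restriction\alpha$ this gives $(\Rightarrow)$.

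Finally I would conclude by absoluteness of the defining formula. The relation ``$(p,1_{\geq\alpha})\Vdash_{\mathbb{P}}\check\gamma\in\dot A$'' is definable in $V$ from the parameter $\dot A\in V$, and $G\restriction\alpha$ is a member of $V[G\restriction\alpha]$; since $A$, being a set of ordinals, is bounded below some $\theta$, the displayed equivalence exhibits $A$ as a set definable in $V[G\restriction\alpha]$ from $G\restriction\alpha$ and $\dot A$, so $A\in V[G\restriction\alpha]$. The main obstacle is the homogeneity step of the second paragraph: everything hinges on realigning the incompatible upper parts $q_{0}$ and $q_{1}$ by an automorphism that fixes the lower part pointwise, which is exactly what weak homogeneity of $\mathbb{P}_{\geq\alpha}$ combined with $\mathrm{fix}(\alpha)\subseteq\mathrm{sym}^{\mathcal{G}}(\dot A)$ provides; once that alignment is secured, the contradiction and hence the whole lemma follow routinely.
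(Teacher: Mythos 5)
Your proof is correct and follows essentially the same route as the paper's: both arguments reduce the lemma to showing that the restriction of a condition to the coordinates below a support $\alpha$ for $\dot A$ already decides membership statements, using weak homogeneity of the Easton product above $\alpha$ together with the Symmetry Lemma and $\mathrm{fix}(\alpha)\subseteq\mathrm{sym}^{\mathcal G}(\dot A)$. The only cosmetic difference is that the paper packages the conclusion as a restricted name $\{\langle \dot q\restriction\alpha,\hat\epsilon\rangle: \dot q\Vdash\hat\epsilon\in\dot A\}$, whereas you read $A$ off inside $V[G\restriction\alpha]$ from the ground-model forcing relation and $G\restriction\alpha$.
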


We apply \textbf{Lemma 4.1} to prove that $\kappa$ remains supercompact in $\mathcal{N}_{1}$ following the methods of \cite{Ina2013}. Inamder \cite{Ina2013} proved that if we assume the consistency of ``$\mathsf{ZFC}$ + there is a supercompact cardinal $\kappa$, and $\gamma<\kappa$ is a regular cardinal'' then the theory ``$\mathsf{ZF}$ + the cardinality of $\gamma$ is preserved + $\gamma^{+}$ is a supercompact cardinal'' is consistent.
We recall the relevant lemmas and incorporate the arguments from \cite{Ina2013} in order to show that $\kappa$ remains supercompact in $\mathcal{N}_{1}$. 

\begin{lem}{(cf. \cite[\textbf{Lemma 26}]{Ina2013})} {\em Let $\kappa$ be a regular cardinal, let $\gamma\geq \kappa$, and let $\mathbb{P}$ be a forcing  notion of size less than $\kappa$. Then for every $C\in \mathcal{P}_{\kappa}(\gamma)^{V[G]}$, there is a $D\in \mathcal{P}_{\kappa}(\gamma)^{V}$ such that in $V[G]$, $C\subseteq D$.}
\end{lem}

\begin{lem}{(\textbf{L\'{e}vy--Solovay Lemma; \cite[\textbf{Lemma 27}]{Ina2013}})} {\em In $V$, let $\kappa$ be a regular cardinal, $D$ be a set and $\mathcal{U}$ be a $\kappa$-complete ultrafilter on $D$. Let $\mathbb{P}$ be a forcing  notion of size less than $\kappa$ and $G$ be a $\mathbb{P}$-generic filter over $V$. Suppose $V[G]\models f:D\rightarrow V$. Then there is an $S\in\mathcal{U}$ and a $g:S\rightarrow V$ in $V$ such that $V[G]\models f\restriction S=g$.}
\end{lem}

Applying \textbf{Lemma 4.1} and \textbf{Lemma 4.3} we obtain the following lemma, which is similar to \cite[\textbf{Lemma 33}]{Ina2013}.

\begin{lem}
{\em Let $D$ be a set and $\mathcal{U}$ be a $\kappa$-complete ultrafilter on $D$ in $V$. Suppose $\mathcal{N}_{1}\models f:D\rightarrow V$. Then there is an $S\in \mathcal{U}$ and a $g:S\rightarrow V$ in $V$ such that $\mathcal{N}_{1}\models f\restriction S=g$.} 
\end{lem}

\begin{proof}
By \textbf{Lemma 4.1}, for some $\alpha<\kappa$ we get $f\in V[G \restriction \alpha]$. Now we can say $G \restriction\alpha$ is $\mathbb{P}'$-generic over $V$ where $\vert\mathbb{P}'\vert<\kappa$. By \textbf{Lemma 4.3}, we obtain an $S\in\mathcal{U}$ and a $g:S\rightarrow V$ in $V$ such that $V[G\restriction\alpha]\models f\restriction S=g$. So, $\mathcal{N}_{1}\models f\restriction S=g$.
\end{proof}

Similarly \cite[\textbf{Lemma 34}]{Ina2013}, we obtain the following lemma by applying \textbf{Lemma 4.4}.

\begin{lem}
{\em In $V$, let $D$ be a set and $\mathcal{U}$ be a $\kappa$-complete ultrafilter on $D$. Let $\mathcal{W}$ be the filter on $D$ generated by $\mathcal{U}$ in $\mathcal{N}_{1}$. Then $\mathcal{W}$ is a $\kappa$-complete ultrafilter.}
\end{lem}

We follow the proof of \cite[\textbf{Theorem 35}]{Ina2013} and refer the reader to \cite{Ina2013} for further details.

\begin{lem}
{\em In $\mathcal{N}_{1}$, $\kappa$ is supercompact.}
\end{lem}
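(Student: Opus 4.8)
The plan is to show that $\kappa$ retains supercompactness by reducing the problem, for each target $\gamma\geq\kappa$, to a \emph{small} forcing extension relative to $\kappa$, so that the Levy--Solovay machinery cited just before the statement applies. First I would fix an arbitrary $\gamma\geq\kappa$ and, working in the ground model $V$, take a $\kappa$-complete normal ultrafilter $\mathcal{U}$ on $\mathcal{P}_{\kappa}(\gamma)$ witnessing that $\kappa$ is $\gamma$-supercompact (this exists since $\kappa$ is supercompact in $V$ by the construction in part (1) of the proof of Theorem 1.3). The goal is to produce, inside $V(G)$, a $\kappa$-complete normal ultrafilter $\mathcal{U}'$ on $\mathcal{P}_{\kappa}(\gamma)$ generated by $\mathcal{U}$.

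The key structural step is to exploit the factorization of $\mathbb{P}$ as an Easton support product of the collapses $\mathbb{P}_{\alpha}=Col(\kappa_{\alpha}^{++},<\kappa_{\alpha+1})$ over the club $C\subseteq\kappa$. I would split $\mathbb{P}$ as $\mathbb{P}^{<\kappa}\times\mathbb{P}^{\geq\kappa}$, where $\mathbb{P}^{<\kappa}$ is the product of the factors $\mathbb{P}_{\alpha}$ with $\kappa_{\alpha}<\kappa$ and $\mathbb{P}^{\geq\kappa}$ collects the remaining (tail) factors. The point is that each individual factor $\mathbb{P}_{\alpha}$ has size strictly below $\kappa$ (indeed below the next element $\kappa_{\alpha+1}$ of $C$), while the tail $\mathbb{P}^{\geq\kappa}$ is sufficiently closed that it does not disturb measures on $\mathcal{P}_{\kappa}(\gamma)$. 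Concretely, since the product has Easton support and the factors below $\kappa$ are indexed along a club of inaccessibles, $\mathbb{P}^{<\kappa}$ behaves, from the perspective of the ultrafilter $\mathcal{U}$, as a forcing of size $<\kappa$ followed by a highly closed remainder; the first cited result (Lemma 27 of \cite{Tan2013}, the Levy--Solovay Lemma for fine/normal measures) then guarantees that $\mathcal{U}$ generates a $\kappa$-complete normal ultrafilter in the relevant small extension, and Theorem 29 of \cite{Tan2013} upgrades this to genuine $\gamma$-supercompactness surviving into the extension.

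The crucial reduction is then to transfer this from the full generic extension $V[G]$ down to the symmetric model $V(G)$. Here I would invoke Lemma 5.1 (just proved): every set of ordinals in $V(G)$ lies in some $V[G\restriction\alpha]$ for $\alpha<\kappa$. A candidate normal ultrafilter on $\mathcal{P}_{\kappa}(\gamma)$ is, after coding, a set of ordinals (or a $\kappa$-indexed family of such sets), so any subset of $\mathcal{P}_{\kappa}(\gamma)$ lying in $V(G)$ already appears in a bounded intermediate extension $V[G\restriction\alpha]$, which is a small forcing extension relative to $\kappa$. Thus I would \emph{define} $\mathcal{U}'=\{X\subseteq\mathcal{P}_{\kappa}(\gamma): X\in V(G)\text{ and }\exists Y\in\mathcal{U}\,(Y\subseteq X)\}$ and verify, using the small-forcing stability of $\mathcal{U}$ on each bounded piece, that $\mathcal{U}'$ is a $\kappa$-complete normal fine ultrafilter on $\mathcal{P}_{\kappa}(\gamma)$ in $V(G)$; $\kappa$-completeness descends because any $<\kappa$-sequence of elements of $\mathcal{U}'$ is captured in a single $V[G\restriction\alpha]$ by Lemma 5.1, and normality follows likewise since any regressive/choice function to be tamed lives in some bounded piece.

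The main obstacle I anticipate is the last descent step: ensuring that $\mathcal{U}'$ is genuinely an \emph{ultrafilter} (total) in $V(G)$, i.e.\ that it decides every $X\subseteq\mathcal{P}_{\kappa}(\gamma)$ lying in the symmetric model, and that $\kappa$-completeness is witnessed \emph{inside} $V(G)$ rather than merely in $V[G]$. Totality is exactly where Lemma 5.1 does the heavy lifting, since it pins every such $X$ into a small extension where $\mathcal{U}$ already decides a subset; but one must be careful that the witnessing $Y\in\mathcal{U}$ and the intersections used for $\kappa$-completeness are themselves available in $V(G)$. This is handled by observing that $\mathcal{U}\in V$ and $V\subseteq V(G)$ (by Lemma 3.13), so ground-model elements of $\mathcal{U}$ and their bounded intersections survive, and the symmetry of the filter $\mathcal{F}$ (generated by the $\mathrm{fix}(\alpha)$ groups) guarantees the relevant names are hereditarily symmetric. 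Once these bookkeeping points are checked, $\gamma$ being arbitrary yields that $\kappa$ is supercompact in $V(G)$.
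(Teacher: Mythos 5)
Your proposal follows essentially the same route as the paper: define the filter $\mathcal{U}_1$ generated in $V(G)$ by a ground-model normal measure $\mathcal{U}$ on $\mathcal{P}_{\kappa}(\gamma)$, use Lemma 5.1 to place each relevant subset or function on $\mathcal{P}_{\kappa}(\gamma)$ into a bounded extension $V[G\restriction\alpha]$ given by a forcing of size $<\kappa$, and invoke the Levy--Solovay Lemma and Theorem 29 of \cite{Tan2013} to verify the ultrafilter property, $\kappa$-completeness, fineness, and normality. The only cosmetic slip is your proposed factor $\mathbb{P}^{\geq\kappa}$: since all factors $\mathbb{P}_{\alpha}$ are indexed by the club $C\subseteq\kappa$, that tail is vacuous, and the split actually needed is just into a bounded piece $\mathbb{P}\restriction\alpha$ of size $<\kappa$ and its remainder, which Lemma 5.1 already supplies.
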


\begin{proof}
Let $\gamma\geq\kappa$ be arbitrary.
Since $\kappa$ is supercompact in $V$, there is a normal measure $\mathcal{U}$ on $\mathcal{P}_{\kappa}(\gamma)$ in $V$. Let $\mathcal{V}$ be the $\kappa$-complete measure it generates on $\mathcal{P}_{\kappa}(\gamma)^{V}$ in $\mathcal{N}_{1}$. Let $\mathcal{W}$ be the filter generated by $\mathcal{V}$ on $\mathcal{P}_{\kappa}(\gamma)$ in $\mathcal{N}_{1}$. Since $\mathcal{W}$ is generated by a $\kappa$-complete ultrafilter on $\mathcal{P}_{\kappa}(\gamma)^{V}\subseteq \mathcal{P}_{\kappa}(\gamma)$, $\mathcal{W}$ is a $\kappa$-complete ultrafilter by \textbf{Lemma 4.5}.

\textbf{Fineness}: Let $X\in \mathcal{P}_{\kappa} (\gamma)^ {\mathcal{N}_{1}}$. By \textbf{Lemma 4.1}, for some $\alpha<\kappa$ we have $X\in V[G\restriction \alpha]$. Since $\kappa$ is not collapsed while going from $V$ to $V[G\restriction \alpha]$,
$X\in \mathcal{P}_{\kappa}(\gamma)^ {V[G \restriction\alpha]}$. 
By \textbf{Lemma 4.2} (and following the arguments in the proof of \cite[\textbf{Theorem 35}]{Ina2013}), $\hat{X}\in \mathcal{V}'$, where $\mathcal{V}'$ is the fine measure that $\mathcal{U}$ generates on $\mathcal{P}_{\kappa}(\gamma)^{V[G\restriction \alpha]}$. Now $\mathcal{U}\subseteq \mathcal{V}'\subseteq \mathcal{W}$
since $\mathcal{P}_{\kappa}(\gamma)^{V[G\restriction \alpha]}\subseteq \mathcal{P}_{\kappa}(\gamma)^{\mathcal{N}_{1}}$. Consequently $\mathcal{W}$ is fine.

\textbf{Choice function}: Let $\mathcal{N}_{1}\models f:\mathcal{P}_{\kappa}(\gamma)\rightarrow\gamma$ and $\mathcal{N}_{1}\models\forall X \in \mathcal{P}_{\kappa}(\gamma)(f(X)\in X)$. By \textbf{Lemma 4.1}, for some $\alpha<\kappa$ we get $h=f\restriction\mathcal{P}_{\kappa}(\gamma)^V \in V[G \restriction\alpha]$. By \textbf{Lemma 4.3}, we get $Y\in\mathcal{U}$ and $(g:Y\rightarrow\gamma)^V$ such that $V[G\restriction\alpha]\models h\restriction Y = g$. Now by normality of $\mathcal{U}$ in $V$ we get a set $x$ in $\mathcal{U}$ such that $g$ is constant on $x$, and so $h$ is constant on a set in $\mathcal{U}$. Hence,  we obtain a set $y$ in $\mathcal{W}$ such that $f$ is constant on $y$.
\end{proof}

\begin{lem}
{\em In $\mathcal{N}_{1}$, $\mathsf{AC_{\kappa}}$ fails.}
\end{lem}

\begin{proof}
Since the cardinality of $\kappa_{\alpha}^{++}$ is preserved in $\mathcal{N}_{1}$ for $\alpha<\kappa$, we can define in $\mathcal{N}_{1}$ the set $X_{\alpha}=\{x\subseteq \kappa_{\alpha}^{++} : x$ codes a well ordering of $(\kappa_{\alpha}^{+++})^{V}$ of order type $\kappa_{\alpha}^{++}\}$. We claim that $\langle X_{\alpha} : \alpha<\kappa\rangle\in \mathcal{N}_{1}$.
The sets $X_{\alpha}$ have fully symmetric names $\dot{X_{\alpha}}$ (any permutation
of a name for an element of $\dot{X_{\alpha}}$ returns a name for an element of $\dot{X_{\alpha}}$). Let $\dot{X}=\{\dot{X_{\alpha}} : \alpha<\kappa\}$. Consequently, $sym (\dot{X})\in \mathcal{F}$, i.e., $\dot{X}$ is symmetric with respect to $\mathcal{F}$. Since all the names appearing in $\dot{X}$ are from $HS$, $\dot{X}\in HS$. Consequently, $\langle X_{\alpha} : \alpha<\kappa\rangle\in \mathcal{N}_{1}$.

Although each $X_{\alpha}\not = \emptyset$, we claim that $(\prod_{\alpha<\kappa}X_{\alpha})^{\mathcal{N}_{1}}=\emptyset$. Otherwise let $y\in(\prod_{\alpha<\kappa}X_{\alpha})^{\mathcal{N}_{1}}$. Since $y$ is a sequence of sets of ordinals, so can be coded as a set of ordinals.
Then there is a $\gamma<\kappa$ such that $y\in V[G\restriction\gamma]$ by \textbf{Lemma 4.1} and $V[G\restriction\gamma]$ is $\mathbb{P}$-generic over $V$ such that $\vert\mathbb{P}\vert<\kappa$. So there is a final segment of the sequence $\langle (\kappa^{+++}_{\alpha}): \alpha<\kappa\rangle$ which remains a sequence of cardinals in $V[G\restriction\gamma]$ which is a contradiction. 
\end{proof}
\end{proof}

\begin{remark}
Since $\mathsf{GCH}$ implies $\mathsf{AC}$, $\mathsf{GCH}$ is weakened to a form which states that {\em there is no injection from $\delta^{++}$ into $\mathcal{P}(\delta)$} in \cite[\textbf{Theorem 3}]{Apt2012}. We follow this weakened version of $\mathsf{GCH}$ in our case. We follow the explanation given in \cite[\textbf{section 4.1}]{Kar2019} by Karagila, to see that in $\mathcal{N}_{1}$, $\mathsf{GCH}$ holds for a limit cardinal $\delta$ if and only if $\delta>\kappa$.

The referee suggested us to remark the following. In the context of $\mathsf{ZF}$, there are two reasonable definitions for the statement “$\mathsf{GCH}$ at $\mu$”.
\begin{enumerate}
\item There is no injection $\mu^{++} \rightarrow^{inj} \mathcal{P}(\mu)$.
\item There is no surjection $\mathcal{P}(\mu) \rightarrow^{sur} \mu^{++}$. 
\end{enumerate}
In $\mathsf{ZF}$, it is possible that there is no $\mu^{+}\rightarrow^{inj} \mathcal{P}(\mu)$, but there is always a surjection
$\mathcal{P}(\mu) \rightarrow^{sur} \mu^{+}$. In our case the above two definitions behave the
same, so the referee suggested us to remark that both definitions (1) and (2) work, by the same proof.
\end{remark}

\section{Proving Dimitriou's Conjecture}
Fix an arbitrary $n_{0}\in \omega$. Apter \cite[\textbf{Theorem 1}]{Apt1983a} obtained a model of $\mathsf{ZF+\neg DC_{\aleph_{n_{0}+1}}}$ where $\aleph_{\omega}$ carries a Rowbottom filter and $\mathsf{DC_{\aleph_{n_{0}}}}$ holds, from an $\omega$-sequence of measurable cardinals.
In \textbf{section 8}, we observe that there is an alternating sequence of measurable and non-measurable cardinals in that model. Apter constructed the model based on Easton support products of L\'{e}vy collapse. 
Consequently, $\mathsf{DC_{\aleph_{n_{0}}}}$ was preserved (cf. \cite[\textbf{Lemma 1.4}]{Apt1983a}). Dimitriou \cite[\textbf{section 1.4}]{Dim2011} constructed a similar model with an alternating sequence of measurable and non-measurable cardinals, excluding the singular limits. She constructed the model based on finite support products of collapsing functions, unlike the model from \cite{Apt1983a}. 
In \cite{Dim2011}, Dimitriou claimed that by using such a  finite support product construction, a lot of arguments could be made easier. In particular, she used finite support products of {\em injective tree-Prikry forcings}, in several constructions from \cite[\textbf{Chapter 2}]{Dim2011}. There are many models of $\mathsf{ZF}$ constructed using the finite support products of L\'{e}vy collapse.  Hayut and Karagila \cite[\textbf{Theorem 5.6}]{HK2019} considered a symmetric extension constructed using the finite support products of L\'{e}vy collapse.
In \textbf{section 6}, we encounter two models of $\mathsf{ZF}$ constructed using the finite support products of L\'{e}vy collapse due to Apter and Cody from \cite{AC2013} (cf. \cite[\textbf{Theorem 2}]{AH1991} as well).
On the other hand, there is a downside to this method. 
Specifically, Dimitriou conjectured that $\mathsf{DC_{\omega}}$ would fail in the model.
In this section, we prove that $\mathsf{AC_{\omega}}$ fails in the model and thus prove the conjecture of Dimitriou.  In other words, we prove \textbf{Theorem 1.6}. We refer the
reader to the terminologies from \textbf{section 2.4}.

\begin{proof}{\textbf{(of Theorem 1.6)}}
Firstly, we give a description of the symmetric extension constructed in \cite[\textbf{section  1.4}]{Dim2011} as follows.
\begin{enumerate}
\item \textbf{The ground model ($V$).}
Let $V$ be a model of $\mathsf{ZFC}$, $\rho$ be an ordinal, and $\mathcal{K}=\langle\kappa_{\epsilon}:0<\epsilon<\rho\rangle$ be a sequence of measurable cardinals. Let $\kappa_0$ be a regular cardinal below all the measurable cardinals in $\mathcal{K}$. 

\item \textbf{Defining the triple $(\mathbb{P},\mathcal{G},\mathcal{I})$.}  
\begin{itemize}
\item Let $\kappa_{1}' = \kappa_0$. For each $1<\epsilon< \rho$ we define the following cardinals,

$\kappa_{\epsilon}'= \kappa_{\epsilon-1}^{+}$ if $\epsilon$ is a successor ordinal,

$\kappa_{\epsilon}'= (\bigcup_{\zeta<\epsilon} \kappa_{\zeta})^+ $ if $\epsilon$ is a limit ordinal and $\bigcup_{\zeta<\epsilon} \kappa_{\zeta}$ is singular,

$\kappa_{\epsilon}'= (\bigcup_{\zeta<\epsilon} \kappa_{\zeta})^{++}$ if $\epsilon$ is a limit ordinal and $\bigcup_{\zeta<\epsilon} \kappa_{\zeta}=\kappa_{\epsilon}$ is regular,

$\kappa_{\epsilon}'= \bigcup_{\zeta<\epsilon} \kappa_{\zeta}$ if $\epsilon$ is a limit ordinal and $\bigcup_{\zeta<\epsilon} \kappa_{\zeta}<\kappa_{\epsilon}$ is regular.

Let $\mathbb{P} = \prod_{0<i<\rho} \mathbb{P}_{i}$ be the Easton support product of $\mathbb{P}_{i}=Fn(\kappa_{i}', \kappa_{i},\kappa_{i}')$ ordered componentwise where for each $0<i<\rho$, $Fn(\kappa_{i}', \kappa_{i},\kappa_{i}')$= $\{p:\kappa_{i}'\rightharpoonup\kappa_{i}:\vert p\vert<\kappa_{i}'$ and $p$ is an injection$\}$ ordered by reverse inclusion. We denote by $p:\kappa_{i}'\rightharpoonup\kappa_{i}$ a partial function from $\kappa_{i}'$ to $\kappa_{i}$.

\item Let $\mathcal{G}=
\prod_{0<i<\rho} \mathcal{G}_{i}$ where for each $0<i<\rho$, $\mathcal{G}_{i}$ is the full permutation group of $\kappa_{i}$ that can be extended to $\mathbb{P}_{i}$ by permuting the range of its conditions, i.e., for all $a\in \mathcal{G}_{i}$ and $p\in \mathbb{P}_{i}$, $a(p)=\{(\psi,a(\beta)):(\psi,\beta)\in p\}$.

\item For every finite sequence of ordinals $e=\{\alpha_i: 1\leq i\leq m\}$ such that for every  $1\leq i\leq m$ there is a distinct $0<\epsilon_{i}< \rho$ such that $\alpha_{i}\in (\kappa_{\epsilon_i}',\kappa_{\epsilon_i})$, we define $E_{e}=\{\langle  \emptyset,... ,p_{\epsilon_1}\cap (\kappa_{\epsilon_1}'\times \alpha_1),\emptyset,...,p_{\epsilon_2}\cap (\kappa_{\epsilon_2}'\times\alpha_2),\emptyset,...,p_{\epsilon_i}\cap (\kappa_{\epsilon_i}'\times\alpha_i),\emptyset,...,p_{\epsilon_m}\cap (\kappa_{\epsilon_m}'\times \alpha_m), \emptyset,...\rangle; \overrightarrow{p}\in \mathbb{P} \}$ and $\mathcal{I}=\{E_e : e\in \prod^{fin}_{0<i<\rho}(\kappa_{i}', \kappa_{i})\}$ where 
$\prod^{fin}_{0<i<\rho}(\kappa_{i}', \kappa_{i})$ is the finite support product.
\end{itemize}

\item \textbf{Defining the symmetric extension of $V$.} 
Clearly, $\mathcal{I}$ is a projectable symmetry generator with projections $\overrightarrow{p}\restriction E_{e}=\langle  \emptyset,..., p_{\epsilon_1}\cap (\kappa_{\epsilon_1}'\times \alpha_1),\emptyset,...,p_{\epsilon_2}\cap (\kappa_{\epsilon_2}'\times\alpha_2),\emptyset,...,p_{\epsilon_m}\cap (\kappa_{\epsilon_m}'\times\alpha_m),\emptyset,...\rangle$. Let $\mathcal{I}$ generate a normal filter $\mathcal{F}_{\mathcal{I}}$ over $\mathcal G$. Let $G$ be a $\mathbb{P}$-generic filter over $V$. We consider the symmetric model $V(G)^{\mathcal{F}_{\mathcal{I}}}$ as our desired symmetric extension. 
\end{enumerate}

It is possible to see that $\mathbb{P}$ is $(\mathcal{G},\mathcal{I})$-homogeneous and so $\langle\mathbb{P},\mathcal{G},\mathcal{I}\rangle$ has the approximation property.
Consequently, by \textbf{Lemma 2.16} for all set of ordinals $X\in V(G)^{\mathcal{F}_{\mathcal{I}}}$, there exists an $E_{e}\in \mathcal{I}$ such that $X\in V[G\cap E_{e}]$.
Following \cite[\textbf{Lemma 1.35}]{Dim2011}, for every $0<\epsilon<\rho$, $(\kappa'_{\epsilon})^{+}=\kappa_{\epsilon}$ in $V(G)^{\mathcal{F}_{\mathcal{I}}}$.
We prove that $\mathsf{AC_{\omega}}$ fails in $V(G)^{\mathcal{F}_{\mathcal{I}}}$. For the sake of convenience we call $V(G)^{\mathcal{F}_{\mathcal{I}}}$ as $V(G)$, $HS^{\mathcal{F}_{\mathcal{I}}}$ as $HS$, and $\mathcal{F}_{\mathcal{I}}$ as $\mathcal{F}$.

\begin{lem}
{\em In $V(G)$, $\mathsf{AC_{\omega}}$ fails.}
\end{lem}
\begin{proof}
Since the cardinality of $\kappa_{n}'$ is preserved in $V(G)$ for $n<\omega$, we can define in $V(G)$ the set $X_{n}=\{ x\subseteq \kappa_{n}' : x$ codes a well ordering of $((\kappa_{n}')^{+})^{V}$ of order type $\kappa_{n}'\}$. We claim that $\langle X_{n} : n<\omega\rangle\in V(G)$. 
The sets $X_{n}$ have fully symmetric names $\dot{X_{n}}$ (any permutation
of a name for an element of $\dot{X_{n}}$ returns a name for an element of $\dot{X_{n}}$). Let $\dot{X}=\{\dot{X_{n}} : n<\omega\}$. Consequently, $sym (\dot{X})\in \mathcal{F}$, i.e., $\dot{X}$ is symmetric with respect to $\mathcal{F}$. Since all the names appearing in $\dot{X}$ are from $HS$, $\dot{X}\in HS$. Consequently, $\langle X_{n} : n<\omega\rangle\in V(G)$.

Although $X_n\not = \emptyset$, we claim that $(\prod_{n<\omega}X_{n})^{V(G)}=\emptyset$. Otherwise let $y\in(\prod_{n<\omega}X_{n})^{V(G)}$. Since $y$ is a sequence of sets of ordinals, so can be coded as a set of ordinals. Thus, there is an $e=\{\alpha_1,...,\alpha_m\}$ such that $y\in V[G\cap E_{e}]$ by \textbf{Lemma 2.16}. There are distinct $\epsilon_{i}$ such that $\alpha_{i}\in (\kappa_{\epsilon_i}',\kappa_{\epsilon_i})$ and let $l$ be max$\{\epsilon_{i}: \alpha_{i}\in e\}$ such that $l$ is an integer. Next let $M=\{i:\epsilon_{i}\leq l\}$ and $M'=\{i:\epsilon_{i}> l\}$.
Then $V[G\cap E_{e}]$ is $\prod_{i\in M}Fn(\kappa'_{\epsilon_{i}}, \alpha_{i},\kappa'_{\epsilon_{i}})\times \prod_{i\in M'}Fn(\kappa'_{\epsilon_{i}}, \alpha_{i},\kappa'_{\epsilon_{i}})$-generic over $V$. By closure properties of $\prod_{i\in M'}Fn(\kappa'_{\epsilon_{i}}, \alpha_{i},\kappa'_{\epsilon_{i}})$, all elements of the sequence $\langle (\kappa'_{n})^{+}: n<\omega\rangle$ remain cardinals after forcing with $\prod_{i\in M'}Fn(\kappa'_{\epsilon_{i}}, \alpha_{i},\kappa'_{\epsilon_{i}})$.
Next, since $M$ is finite we can find $j<\omega$ such that for all $r\geq j$, $\vert\prod_{i\in M}Fn(\kappa'_{\epsilon_{i}}, \alpha_{i},\kappa'_{\epsilon_{i}})\vert<\kappa_{r}$. Thus, a final segment of the sequence $\langle (\kappa'_{n})^{+}: n<\omega\rangle$ remains a sequence of cardinals in $V[G\cap E_{e}]$ which is a contradiction. 
\end{proof}
\end{proof}

\begin{remark}
Hayut and Karagila \cite[\textbf{Theorem 5.6}]{HK2019} proved the following.
\begin{itemize}
    \item Assuming the existence of countably many measurable cardinals, it is consistent that there is a uniform ultrafilter on $\aleph_{\omega}$ but for all $0 < n < \omega$, there
are no uniform ultrafilters on $\aleph_{n}$.
\end{itemize}

They considered a symmetric extension $M$ based on finite support product of the symmetric collapses $Col(\kappa_{n}, < \kappa_{n+1})$. Following the proof of \textbf{Lemma 5.1}, we can say that $\mathsf{AC_{\omega}}$ fails in the symmetric extension $M$. We consider another similar symmetric extension. Let $V_{1}$ be a model of $\mathsf{ZFC}$ where $\langle \kappa_{n} : 1\leq n<\omega\rangle$ is a countable sequence of supercompact cardinals. Let $\mathbb{Q}$ be the forcing notion (see  \cite{Apt1983, Apt2005}) which makes the supercompactness of each $\kappa_{n}$ indestructible under $\kappa_{n}$-directed closed forcing notions. Let $H$ be a $\mathbb{Q}$-generic filter over $V_{1}$ and $V=V_{1}[H]$ be our ground model. Let $\kappa_{0}=\omega$ in $V$. 
Consider the symmetric extension $\mathcal{N}$ obtained by taking the finite support product of
the symmetric collapses $Col(\kappa_{n}, < \kappa_{n+1})$. In the resulting model $\mathcal{N}$ the following hold:
\begin{enumerate}
    \item Since the forcing notions involved are weakly homogeneous, if $A$ is a set of ordinals in $\mathcal{N}$, then $A$ was added by an intermediate submodel where $\mathsf{AC}$ holds. 
    \item For $n > 0$, each $\kappa_{n}$ becomes $\aleph_{n}$ in $\mathcal{N}$.
\end{enumerate}
Following \cite[\textbf{Theorem 4.3}]{HK2019}, we can observe that for each $1\leq n<\omega$, there are no uniform ultrafilters on $\aleph_{n}$ in $\mathcal{N}$. Consequently for each $1\leq n<\omega$, $\aleph_{n}$ can not be a measurable cardinal in $\mathcal{N}$. Since we are considering symmetric extension based on finite support products, $\mathsf{AC_{\omega}}$ fails following the proof of \textbf{Lemma 5.1}.
We can see that each $\aleph_{n}$ remains a Ramsey cardinal for $1 \leq n<\omega$ in $\mathcal{N}$. Fix $1\leq n<\omega$. Let $f:[\kappa_{n}]^{<\omega}\rightarrow 2$ is in $\mathcal{N}$. Since $f$ can be coded by a set of ordinals, $f$ was added by an intermediate submodel (say $V'$) where $\mathsf{AC}$ holds. Without loss of generality, we can say that $V'= V[G_1][G_2]$ where $G_1$ is $\mathbb{Q}_1$-generic over $V$ such that $\mathbb{Q}_1$ is $\kappa_{n}$-directed closed and $G_2$ is $\mathbb{Q}_2$-generic over $V[G_{1}]$ such that $\vert\mathbb{Q}_2\vert<\kappa_{n}$. Since $\mathbb{Q}_{1}$ is $\kappa_{n}$-directed closed, $\kappa_{n}$ remains supercompact in $V[G_{1}]$ as the supercompactness of $\kappa_{n}$ was indestructible under $\kappa_{n}$-directed closed forcing notions in $V$.
Consequently, $\kappa_{n}$ remains a Ramsey cardinal in $V[G_{1}]$. Since $\vert\mathbb{Q}_2\vert<\kappa_{n}$, $\kappa_{n}$ remains Ramsey in $V[G_{1}][G_{2}]$ by \textbf{Theorem 2.5}. There is then a set $X\in [\kappa_{n}]^{\kappa_{n}}$ homogeneous for $f$ in $V'$, and since $V'\subseteq \mathcal{N}$, $X\in[\kappa_{n}]^{\kappa_{n}}$ is homogeneous for $f$ in $\mathcal{N}$. Consequently, for each $1\leq n<\omega$, $\kappa_{n}$ is Ramsey in $\mathcal{N}$.
\end{remark}

\section{Reducing the assumption of supercompactness to strong compactness}

\subsection{Strongly compact Prikry forcing}
Suppose $\lambda>\kappa$ and $\kappa$ be a $\lambda$-strongly compact cardinal in the ground model $V$. Let $\mathcal{U}$ be a $\kappa$-complete fine ultrafilter over $\mathcal{P}_{\kappa}(\lambda)$.

\begin{defn}{(cf. \cite[\textbf{Definition 1.51}]{Git2010})}
A set $T$ is called a {\em $\mathcal{U}$-tree with trunk t} if and only if the following hold.
\begin{enumerate}
    \item $T$ consists of finite sequences $\langle P_{1},...,P_{n}\rangle$ of elements of $\mathcal{P}_{\kappa}(\lambda)$ so that $P_{1}\subseteq P_{2}\subseteq ... \subseteq P_{n}$.
    \item $\langle T, \unlhd\rangle$ is a tree, where $\unlhd$ is the order of the end extension of finite sequences.
    \item $t$ is a trunk of $T$, i.e., $t\in T$ and for every $\eta\in T$, $\eta \unlhd t$ or $t \unlhd \eta$.
    \item For every $t\unlhd \eta$, $Suc_{T} (\eta)=\{Q\in \mathcal{P}_{\kappa}(\lambda): \eta \frown\langle Q\rangle\in T\}\in \mathcal{U}$.
\end{enumerate}
\end{defn}
The set $\mathbb{P}_{\mathcal{U}}$ consists of all pairs $\langle t,T\rangle$ such that $T$ is a
$\mathcal{U}$-tree with trunk $t$. If $\langle t,T\rangle, \langle s,S\rangle\in\mathbb{P}_{\mathcal{U}}$, we say that $\langle t,T\rangle$ is stronger than
$\langle s,S\rangle$, and denote this by $\langle t,T\rangle\geq\langle s,S\rangle$, if and only if $T \subseteq S$. We call $\mathbb{P}_{\mathcal{U}}$ with the ordering defined above as the {\em strongly compact Prikry forcing} with respect to $\mathcal{U}$.\footnote{Alternatively, we also recall the definition of a strongly compact Prikry forcing $\mathbb{P}_{\mathcal{U}}$ from \cite{AH1991}. Let $\mathcal{U}$ be a fine measure on $\mathcal{P}_{\kappa}(\lambda)$ and $\mathcal{F}=\{f: f$ is a function from $[\mathcal{P}_{\kappa}(\lambda)]^{<\omega}$ to $\mathcal{U}\}$.  In particular, $\mathbb{P}_{\mathcal{U}}$ is the set of all finite sequences of the form $\langle p_{1},...p_{n},f\rangle$ satisfying the following properties.

\begin{itemize}
    \item $\langle p_{1},...p_{n}\rangle\in [\mathcal{P}_{\kappa}(\lambda)]^{<\omega}$.
    \item for $0\leq i<j\leq n$, $p_{i}\cap \kappa\not= p_{j}\cap \kappa$.
    \item $f\in \mathcal{F}$.
\end{itemize}

The ordering on $\mathbb{P}_{\mathcal{U}}$ is given by $\langle q_{1},...q_{m},g\rangle \leq \langle p_{1},...,p_{n},f\rangle$ if and only if we have the following.

\begin{itemize}
    \item $n\leq m$.
    \item $\langle p_{1},...,p_{n}\rangle$ is the initial segment of $\langle q_{1},...,q_{m}\rangle$.
    \item For $i=n+1,..., m$, $q_{i}\in f(\langle p_{1},...,p_{n}, q_{n+1},...,q_{i-1}\rangle)$.
    \item For $\overrightarrow{s}\in [\mathcal{P}_{\kappa}(\lambda)]^{<\omega}$, $g(\overrightarrow{s})\subseteq f(\overrightarrow{s})$.
\end{itemize}

For any regular $\delta\in [\kappa,\lambda]$, we denote $r\restriction{\delta}=\{\langle p_{0}\cap \delta,...p_{n}\cap\delta\rangle: \exists f\in \mathcal{F} \left[\langle p_{0},...p_{n},f\rangle\in G\right]\}$. In $V[r\restriction \kappa]\subseteq V[G]$, $\kappa$ is a singular cardinal having cofinality $\omega$. Since any two conditions having the same stems are compatible, i.e. any two conditions of the form $\langle p_{1},...,p_{n},f\rangle$ and $\langle p_{1},...,p_{n},g \rangle$ are compatible., \textbf{$\mathbb{P}_{\mathcal{U}}$ is $(\lambda^{<\kappa})^{+}$-c.c.}
}

Let $G$ be a $\mathbb{P}_{\mathcal{U}}$-generic filter over $V$. We summarize the necessary properties of $\mathbb{P}_{\mathcal{U}}$ from \cite{Git2010}.
\begin{itemize}
    \item By a Prikry like lemma and a similar proof as in the ordinary Prikry forcing,\footnote{i.e., the arguments of \cite[\textbf{Lemma 1.9}]{Git2010}.} $\mathbb{P}_{\mathcal{U}}$ does not add new bounded subsets to $\kappa$ (cf. \cite[\textbf{Lemma 1.1}]{AH1991}, \cite[\textbf{Theorem 1.52}]{Git2010}).
    
    \item Every cardinal in $(\kappa, \lambda]$ is collapsed to have size $\kappa$ in $V[G]$ (cf. \cite[\textbf{Lemma 1.50}]{Git2010} and the arguments before \cite[\textbf{Theorem 1.52}]{Git2010}).
    
    \item Every $\delta \in [\kappa, \mu]$ of cofinality $\geq \kappa$ (in $V$) changes its cofinality to $\omega$ in $V[G]$ (cf. \cite[\textbf{Lemma 1.50, Theorem 1.52}]{Git2010}), where $\mu=\lambda$ if $cf(\lambda) \geq \kappa$ and $\mu=\lambda^{+}$ if $cf(\lambda) < \kappa$.
    
    
    \item Any two conditions with the same trunk, i.e. of the form $\langle t, T\rangle$ and $\langle t, S\rangle$ are compatible. Also there are $\lambda^{<\kappa}$ many possibilities for trunks for members of $\mathbb{P}_{\mathcal{U}}$. Consequently, $\mathbb{P}_{\mathcal{U}}$ satisfies the $(\lambda^{<\kappa})^{+}$-c.c. (cf. \cite[\textbf{Lemma 1.48}]{Git2010} and the arguments before \cite[\textbf{Theorem 1.52}]{Git2010}).
\end{itemize}

\subsection{The Proofs of Observations 1.7 and 1.8} In this subsection, we prove \textbf{Observation 1.7} and \textbf{Observation 1.8}. 
\begin{proof}{\textbf{(of Observation 1.7)}} We perform the construction in two stages. In the first stage, we consider a model similar to the choiceless model constructed in \cite[\textbf{section 2}]{AH1991}.

\begin{enumerate}
\item \textbf{The  ground  model ($V$):} We start with a model $V_{0}$ of $\mathsf{ZFC}$ where $\kappa$ is a strongly compact cardinal, $\theta$ is an ordinal, and $\mathsf{GCH}$ holds. By \cite[\textbf{Theorem 3.1}]{ADU2021} we can obtain a forcing extension $V_{1}$ in which the strong compactness of $\kappa$ is indestructible under $Add(\kappa, \theta)$ for all $\theta$. Then forcing with $Add(\kappa, \theta)$, we may assume without loss of generality that $\kappa$ is
strongly compact and $2^{\kappa}=\theta$ in a forcing extension $V$ of $V_{1}$. Let $\lambda$ be a cardinal in $V$ such that $\kappa < \lambda$ and $(cf(\lambda))^{V}<\kappa$.

\item \textbf{Defining an inner model of a forcing extension of $V$:}  

\begin{itemize}
\item Let $\mathcal{U}$ be a fine measure on $\mathcal{P}_{\kappa}(\lambda)$ and $\mathbb{P}=\mathbb{P}_{\mathcal{U}}$ be the strongly compact Prikry forcing. Let $G$ be a $\mathbb{P}_{\mathcal{U}}$-generic filter over $V$.

\item We consider a model similar to the choiceless model constructed in \cite[\textbf{section 2}]{AH1991}. In particular, we consider our model $\mathcal{N}$ to be the least model of $\mathsf{ZF}$ extending $V$ and containing
$r\restriction \delta$ for each regular $\delta\in [\kappa,\lambda)$
where $r\restriction{\delta}=\{\langle p_{0}\cap \delta,...,p_{n}\cap\delta\rangle: \exists f\in \mathcal{F} \left[\langle p_{0},...,p_{n},f\rangle\in G\right]\}$ but not the $\lambda$-sequence of $r\restriction\delta$'s. 
\end{itemize}
\end{enumerate}

We follow the homogeneity of strongly compact Prikry forcing mentioned in \cite[\textbf{Lemma 2.1}]{AH1991} to observe the following lemma.

\begin{lem}
{\em If $A\in \mathcal{N}$ is a set of ordinals, then $A\in V[r\restriction \delta]$ for some regular $\delta\in[\kappa,\lambda)$.}
\end{lem}

\begin{lem}
{\em In $\mathcal{N}$, $\kappa$ is a strong limit cardinal.}
\end{lem}

\begin{proof}
Since $V\subseteq \mathcal{N}\subseteq V[G]$ and $\mathbb{P}$ does not add bounded subsets to $\kappa$, $V$ and $\mathcal{N}$ have the same bounded subsets of $\kappa$.\footnote{We can observe another argument from \cite[\textbf{Lemma 2.2}]{AH1991}.} Consequently, in $\mathcal{N}$, $\kappa$ is a limit of inaccessible cardinals and thus a strong limit cardinal as well.
\end{proof}

As explained in the introduction, our definitions of “strong limit cardinal” and “inaccessible cardinal” generally do not make sense in choiceless models. In spite of that, we can see that the assertion in \textbf{Lemma 6.3} makes sense (see the paragraph after \cite[\textbf{Theorem 1}]{AC2013}). Since $\mathcal{N}$ and $V$ have the same bounded subsets of $\kappa$, the usual definitions of “$\kappa$ is a strong limit cardinal”
and “$\delta < \kappa$ is an inaccessible cardinal” make sense in $\mathcal{N}$.

\begin{lem}
{\em If $\gamma\geq\lambda$ is a cardinal in $V$, then $\gamma$ remains a cardinal in $\mathcal{N}$.}
\end{lem}

\begin{proof}
For the sake of contradiction, let $\gamma$ is not a cardinal in $\mathcal{N}$. There is then a bijection $f:\alpha\rightarrow\gamma$ for some $\alpha<\gamma$ in $\mathcal{N}$. Since $f$ can be coded by a set of ordinals, by \textbf{Lemma 6.2} we have $f\in V[r\restriction\delta]$ for some regular $\delta\in [\kappa,\lambda)$. Since $\mathsf{GCH}$ is assumed in $V_{0}$ we have $(\delta^{<\kappa})^{V_{0}}=\delta$, and since $Add(\kappa,\theta)$ preserves cardinals and adds no sequences of ordinals of length less than $\kappa$, we conclude that $(\delta^{<\kappa})^{V}=(\delta^{<\kappa})^{V_{0}}=\delta$.     
Now $\mathbb{P}_{\mathcal{U}\restriction \delta}$ is $(\delta^{<\kappa})^{+}$-c.c. in $V$ and hence $\delta^{+}$-c.c. in $V$. Consequently, $\gamma$ is a cardinal in $V[r\restriction\delta]$ which is a contradiction.
\end{proof}

\begin{lem}
{\em In $\mathcal{N}$, $cf(\kappa)=\omega$. Moreover, $(\kappa^{+})^{\mathcal{N}}=\lambda$ and $cf(\lambda)^{\mathcal{N}} = cf(\lambda)^V$.}
\end{lem}

\begin{proof}
For each regular $\delta\in [\kappa,\lambda)$, we have $V[r\restriction \delta]\subseteq \mathcal{N}$. Consequently, $cf(\kappa)^{\mathcal{N}}=\omega$ since $cf(\kappa)^{V[r\restriction \kappa]}=\omega$.
Following \cite[\textbf{Lemma 2.4}]{AH1991}, every ordinal in $(\kappa,\lambda)$ which is a cardinal in $V$ collapses to have size $\kappa$ in $\mathcal{N}$, and so $(\kappa^{+})^{\mathcal{N}}=\lambda$. 
Since $V$ and $\mathcal{N}$ have same bounded subsets of $\kappa$, we see that $cf(\lambda)^{\mathcal{N}}=cf(\lambda)^{V}<\kappa$. 
\end{proof}

We can see that since, $V\subseteq \mathcal{N}$ and $(2^{\kappa}=\theta)^{V}$, there is a $\theta$-sequence of distinct subsets of $\kappa$ in $\mathcal{N}$. Since $cf(\kappa^{+})^{\mathcal{N}}<\kappa$ we can also see that $\mathsf{AC_{\kappa}}$ fails in $\mathcal{N}$.

In the second stage, we consider an inner model of a forcing extension of $\mathcal{N}$ based on a product of L\'{e}vy collapses as done in the proof of \cite[\textbf{Theorem 2}]{AC2013}. 
\begin{enumerate}
\item \textbf{The ground model ($\mathcal{N}$).} 
Consider the ground model to be $\mathcal{N}$. Let $\langle \kappa_{n}:n<\omega\rangle$ be a sequence of inaccessible cardinals less than $\kappa$ which is cofinal in $\kappa$.

\item \textbf{Defining an inner model of a forcing extension of $\mathcal{N}$.}  

\begin{itemize}
\item Let $\mathbb{P}=Col(\omega,<\kappa)$, and let $G$ be $\mathbb{P}$-generic over $\mathcal{N}$. Let $\mathbb{P}_{n}=Col(\omega,<\kappa_{n})$. Following the proof of \cite[\textbf{Theorem 2}]{AC2013}, $G_{n}=G\cap \mathbb{P}_{n}$ is $\mathbb{P}_{n}$-generic over $\mathcal{N}$.
\item Let $\mathcal{M}$ be the least model of $\mathsf{ZF}$ extending $\mathcal{N}$ containing each $G_{n}$, but not $G$ as constructed in \cite[\textbf{Theorem 2}]{AC2013}. 

\end{itemize}
\end{enumerate}

Following the proof of \cite[\textbf{Theorem 2}]{AC2013}, we have the following in $\mathcal{M}$.

\begin{enumerate}
\item Since $\mathcal{M}$ contains $G_{n}$ for each $n$, cardinals in $[\omega,\kappa)$ are collapsed to have size $\omega$ and so $\aleph_{1}^{\mathcal{M}}\geq\kappa$. 
\item If $x \in \mathcal{M}$ is a set of
ordinals, then $x \in \mathcal{N}[G_{n}]$ for some $n < \omega$.
\item  Since $Col(\omega, <\kappa_{n})$ is canonically well-orderable in $\mathcal{N}$ with order type $\kappa_{n}$, cardinals and cofinalities greater than or equal to $\kappa$ are preserved to $\mathcal{N}[G_{n}]$.
\item Since $\kappa$ is not collapsed, $\kappa=\aleph_{1}^{\mathcal{M}}$, $cf(\aleph_{1})^{\mathcal{M}}=cf(\aleph_{2})^{\mathcal{M}}=\omega$. Consequently, $\mathsf{AC_{\omega}}$ fails in $\mathcal{M}$.
    \item There is a sequence of distinct subsets of $\aleph_{1}$ of length $\theta$.
\end{enumerate}
\end{proof}
\begin{proof}{\textbf{(of Observation 1.8)}} We recall the model $\mathcal{N}$ from the previous proof.  We consider an inner model of the forcing extension of $\mathcal{N}$ as done in the proof of \cite[\textbf{Theorem 3}]{AC2013}.
\begin{enumerate}
\item \textbf{The ground model ($\mathcal{N}$).} 
Consider the ground model to be $\mathcal{N}$ as in the previous proof. Let $\langle \kappa_{n}:n<\omega\rangle$ be a sequence of inaccessible cardinals less than $\kappa$ which is cofinal in $\kappa$. 
\item \textbf{Defining an inner model of a forcing extension of $\mathcal{N}$.}  

\begin{itemize}
\item Let $\mathbb{P}_{0}=Col(\omega,<\kappa_{0})$, $\mathbb{P}_{i}=Col(\kappa_{i-1},<\kappa_{i})$ for $1\leq i<\omega$. Let $\mathbb{P}=\Pi_{i<\omega}^{fin} \mathbb{P}_{i}$ be a finite support product of $\mathbb{P}_{i}$. For each $n<\omega$, we can factor $\mathbb{P}$ as $\mathbb{P}\cong \mathbb{P}_{n}^{*}\times \mathbb{P}^{n}$ where $\mathbb{P}_{n}^{*}=\Pi_{0\leq i\leq n}^{fin} \mathbb{P}_{i}$ and $\mathbb{P}^{n}=\Pi_{n+1\leq i<\omega}^{fin} \mathbb{P}_{i}$. Let $G\cong G_{n}^{*}\times G^{n}$ be a $\mathbb{P}$-generic filter over $\mathcal{N}$. 
Following \cite[\textbf{Theorem 3}]{AC2013}, each $G_{n}^{*}$ is $\mathbb{P}_{n}^{*}$-generic over $\mathcal{N}$.

\item Let $\mathcal{M}$ be the least model of $\mathsf{ZF}$ extending $\mathcal{N}$ containing each $G_{n}^{*}$, but not $\langle G^{*}_{n}:n<\omega \rangle$ as constructed in \cite[\textbf{Theorem 3}]{AC2013}.

\end{itemize}
\end{enumerate}

Following the proof of \cite[\textbf{Theorem 3}]{AC2013}, we have the following in $\mathcal{M}$.

\begin{enumerate}
\item Since $G^{*}_{n} \in \mathcal{M}$ for each $n < \omega$, we have $\aleph_{\omega} \geq \kappa$ and hence $\aleph_{\omega+1} \geq (\kappa^{+})^{\mathcal{N}}$ in $\mathcal{M}$. 
\item If $x$ is a set of ordinals in $\mathcal{M}$, then $x \in  \mathcal{N}[G^{*}_{n}]$ for some
$n < \omega$ (cf. \cite[\textbf{Lemma 6}]{AC2013}).
\item Since $\mathcal{N}$
and $V$ contain the same bounded subsets of $\kappa$, and $V \subseteq \mathcal{N}$, $\mathbb{P}^{*}_{n}$
can be well-ordered in both $V$ and $\mathcal{N}$ with order type less than $\kappa$. Therefore,
cardinals and cofinalities greater
than or equal to $\kappa$ are preserved.
    \item $\kappa=\aleph_{\omega}$ and $(\kappa^{+})^{\mathcal{N}}=\aleph_{\omega+1}$ are both singular with $\omega\leq cf(\aleph_{\omega+1})<\aleph_{\omega}$. 
    \item $\mathsf{AC_{\omega}}$ fails in $\mathcal{M}$.
    \item There is a sequence of distinct subsets of $\aleph_{\omega}$ of length $\theta$.
\end{enumerate}
\end{proof}

\section{Infinitary Chang conjecture from a measurable cardinal}

\subsection{Infinitary Chang conjecture}
We recall the required definitions and relevant lemmas from \cite[\textbf{Chapter 3}]{Dim2011}. For the sake of our convenience we denote a structure $\mathcal{A}$ on domain $A$ as $\mathcal{A}=\langle A,...\rangle$.

\begin{defn}
{(\textbf{Set of good indiscernibles}; \cite[\textbf{Definition 3.2}]{Dim2011})} For a structure $\mathcal{A}=\langle A,...\rangle$ with $A\subseteq Ord$, a set $I\subseteq A$ is a {\em set of indiscernibles} if for all $n<\omega$, all $n$-ary formula $\phi$ in the language for $\mathcal{A}$, and every $\alpha_1,...,\alpha_n,\alpha'_1,...,\alpha'_n$ in $I$, if $\alpha_1<...<\alpha_n$ and $\alpha'_1<...<\alpha'_n$ then 

\centerline{$\mathcal{A}\models \phi(\alpha_1,...,\alpha_n)$ if and only if $\mathcal{A}\models \phi(\alpha'_1,...,\alpha'_n)$.}

The set $I$ is a {\em set of good indiscernibles} if and only if it is a set of indiscernibles and we allow parameters that lie below min$\{\alpha_1,...,\alpha_n,\alpha'_1,...,\alpha'_n \}$, i.e., if for every $x_1,...,x_m\in A$ such that $x_1,...,x_m\leq min\{\alpha_1,...,\alpha_n,\alpha'_1,...,\alpha'_n\}$ and every $(n+m)$-ary formula $\phi$,

\centerline{$\mathcal{A}\models \phi(x_1,...,x_m,\alpha_1,...,\alpha_n)$ if and only if $\mathcal{A}\models \phi(x_1,...,x_m,\alpha'_1,...,\alpha'_n)$.} 
\end{defn}

\begin{defn}
{(\textbf{$\alpha$-Erd\H{o}s cardinal}; \cite[\textbf{Definition 0.14}]{Dim2011})} The partition relation $\alpha\rightarrow (\beta)^{\gamma}_{\delta}$ for ordinals $\alpha,\beta,\gamma,\delta$ means for all $f:[\alpha]^{\gamma}\rightarrow\delta$ there is an $X\in[\alpha]^{\beta}$ such that $X$ is homogeneous for $f$. For an infinite ordinal $\alpha$, the {\em $\alpha$-Erd\H{o}s cardinal} $\kappa(\alpha)$ is the least $\kappa$ such that $\kappa\rightarrow(\alpha)^{<\omega}_{2}$.
\end{defn}

For cardinals $\kappa>\lambda$ and ordinal $\theta<\kappa$ we mean $\kappa\rightarrow^{\theta}(\lambda)^{<\omega}_{2}$ if for every first order structure $\mathcal{A}=\langle \kappa,...\rangle$ with a countable language, there is a set $I\in [\kappa\backslash\theta]^{\lambda}$ of good indiscernibles for $\mathcal{A}$ (cf. \cite[\textbf{Definition 3.7}]{Dim2011}).

\begin{defn}
{(\textbf{Infinitary Chang conjecture};  \cite[\textbf{Definition 3.10}]{Dim2011})} 
Let $\langle \kappa_{n}: n < \omega\rangle$ and $\langle \lambda_{n}: n < \omega\rangle$ be two increasing sequences of cardinals such that $\kappa_{n}>\lambda_{n}$ for every $n<\omega$. The {\em infinitary Chang conjecture} for these sequences, written as $(\kappa_n)_{n\in\omega}\twoheadrightarrow (\lambda_n)_{n\in\omega}$, is the statement ``for every  first order structure $\mathcal{A}=\langle \bigcup_{n<\omega}\kappa_{n},... \rangle$ there is an elementary substructure $\mathcal{B}\prec\mathcal{A}$ with domain $B$ of cardinality $\bigcup_{n<\omega}{\lambda_{n}}$ such that for every $n\in\omega$, $\vert B\cap \kappa_n\vert=\lambda_{n}$".
\end{defn}

\begin{defn}{(cf. \cite[\textbf{Definition 3.14}]{Dim2011})}
Let $\langle \kappa_{i}: i<\omega\rangle$ and $\langle \lambda_{i}: 0< i<\omega\rangle$ be two increasing sequences of cardinals. Let $\kappa=\bigcup_{i<\omega} \kappa_{i}$. We say $\langle\kappa_{i}: i<\omega\rangle$ is a {\em coherent sequence of cardinals} with the property $\kappa_{i+1}\rightarrow^{\kappa_i}(\lambda_{i+1})^{<\omega}_{2}$ if and only if for every structure $\mathcal{A}=\langle \kappa,...\rangle$ with a countable language, there is a $\langle \lambda_{i}: 0< i<\omega\rangle$-coherent sequence of good indiscernibles for $\mathcal{A}$ with respect to $\langle \kappa_{i}: i<\omega\rangle$, i.e., a sequence $\langle A_{i} : 0  < i < \omega\rangle$ with the following properties.
\begin{enumerate}
\item for every $0<i<\omega$, $A_{i} \in
[\kappa_{i}\backslash \kappa_{i-1}]^{\lambda_{i}}$,

\item if $x, y \in [\kappa]^{<\omega}$ are such that $x = \{x_{1},..., x_{n}\}$, $y = \{y_{1},..., y_{n}\}$, $x, y \subseteq \bigcup_{0<i<\omega} A_{i}$, and for every $0<i<\omega$, $\vert x \cap A_{i}\vert = \vert y \cap A_{i}\vert$ then for every $(n + l)$-ary formula $\phi$ in the language of $\mathcal{A}$ and every $z_{1},...,z_{l} < min \{x_{1},...,x_{n},y_{1},...,y_{n}\}$,
$\mathcal{A} \models \phi(z_{1},...,z_{l},x_{1},...,x_{n}) \iff \mathcal{A} \models \phi(z_{1},...,z_{l},y_{1},..., y_{n})$.
\end{enumerate}
\end{defn}

\begin{lem}
{($\mathsf{ZF}$; \textbf{Dimitriou}; \cite[\textbf{Corollary 3.15}]{Dim2011})}{\em Let $\langle \kappa_{i}: i<\omega\rangle$ and $\langle \lambda_{i}: 0< i<\omega\rangle$ be two increasing sequences of cardinals. Let $\kappa=\bigcup_{i<\omega} \kappa_{i}$. If $\langle\kappa_{i}: i<\omega\rangle$ is a coherent sequence of cardinals with the property $\kappa_{i+1}\rightarrow^{\kappa_i}(\lambda_{i+1})^{<\omega}_{2}$ then the infinitary Chang Conjecture $(\kappa_n)_{n\in\omega}\twoheadrightarrow (\lambda_n)_{n\in\omega}$ holds.}
\end{lem}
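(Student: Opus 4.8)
The plan is to run the usual ``Skolem hull of an indiscernible set'' argument, but with care that everything stays definable so it survives in ZF. Fix a structure $\a{A}=\langle\kappa,\ldots\rangle$ in a countable language, where $\kappa=\bigcup_{n<\omega}\kappa_n$. Because the domain $\kappa$ is an ordinal it carries its canonical well-ordering, so I would expand $\a{A}$ to a Skolemized structure $\a{A}^{*}$ by adjoining, for each of the countably many formulas, the Skolem function that reads off the least witness in this well-ordering. This keeps the language countable and makes $\a{A}^{*}$ admit definable Skolem functions without invoking any choice, which is the key point for a ZF argument.

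First I would invoke the hypothesis. Since $\{\kappa_i:i<\omega\}$ is a coherent sequence of cardinals with the property $\kappa_{i+1}\rightarrow^{\kappa_i}(\lambda_{i+1})^{<\omega}_{2}$, applying \textbf{Definition 3.14} to $\a{A}^{*}$ yields a $\langle\lambda_i:0<i<\omega\rangle$-coherent sequence of good indiscernibles $\langle I_i:0<i<\omega\rangle$ for $\a{A}^{*}$ with respect to $\langle\kappa_i:i<\omega\rangle$; thus each $I_i$ is a good-indiscernible set of cardinality $\lambda_i$ placed in the $i$-th interval determined by the $\kappa_i$'s, so in particular $I_i\subseteq\kappa_i$. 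Put $I=\bigcup_{0<i<\omega}I_i$ and let $\a{B}$ be the Skolem hull of $I$ in $\a{A}^{*}$, that is $B=\{t^{\a{A}^{*}}(\vec a):t\text{ a Skolem term},\ \vec a\in[I]^{<\omega}\}$. Since $\a{A}^{*}$ has Skolem functions, $\a{B}\prec\a{A}^{*}$, and reducing back to the language of $\a{A}$ gives $\a{B}\prec\a{A}$. As $B$ is a set of ordinals it is automatically well-ordered, so no choice is needed to handle it.

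It remains to compute $|B\cap\kappa_n|=\lambda_n$ for each $n$. The lower bound is immediate: $I_n\subseteq B\cap\kappa_n$, so $|B\cap\kappa_n|\geq\lambda_n$. For the upper bound I would show that every element of $B$ below $\kappa_n$ is represented by a Skolem term whose indiscernible arguments all lie in $I_1\cup\cdots\cup I_n$. Indeed, suppose $t(\vec a)<\kappa_n$ where $\vec a$ lists elements of $I$ and some belong to levels $I_m$ with $m>n$, hence exceed $\kappa_n$. Goodness of the indiscernibles lets me treat the entries below $\kappa_n$ as fixed parameters; then since the value $t(\vec a)$ lies strictly below the high indiscernibles being used, indiscernibility shows this value is independent of the particular choice of those high indiscernibles, so they may be deleted and replaced by canonical ones from the lower levels. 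This reduces the representation to arguments in $I_1\cup\cdots\cup I_n$. Since $|I_1\cup\cdots\cup I_n|=\lambda_n$ and $[\lambda_n]^{<\omega}$ has cardinality $\lambda_n$ in ZF for the well-ordered cardinal $\lambda_n$, and there are only countably many Skolem terms, we obtain $|B\cap\kappa_n|\leq\aleph_0\cdot\lambda_n=\lambda_n$. Hence $|B\cap\kappa_n|=\lambda_n$, and since $B=\bigcup_n(B\cap\kappa_n)$ is a set of ordinals, $|B|=\bigcup_n\lambda_n=\bigcup_i\lambda_i$, so $\a{B}$ witnesses $(\kappa_n)_{n\in\omega}\twoheadrightarrow(\lambda_n)_{n\in\omega}$.

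The main obstacle will be the upper-bound ``term elimination'' step: making precise, from the exact form of the coherence hypothesis, that every Skolem term landing below $\kappa_n$ really can have its above-$\kappa_n$ indiscernible arguments removed using goodness. This is where the coherence of the sequence (the placement of $I_i$ in the $i$-th interval), rather than mere indiscernibility, is essential, since it is what upgrades control of $|B|$ at the top into the level-by-level requirement $|B\cap\kappa_n|=\lambda_n$ for every $n$ simultaneously. I would double-check that goodness is invoked only for parameters genuinely below the arguments being varied, so that the indiscernibility inference is legitimate.
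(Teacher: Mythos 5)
The paper does not actually prove this lemma: it is recalled verbatim as Corollary~3.15 of Dimitriou's thesis and used as a black box in Section~9, so there is no in-paper argument to compare yours against. That said, your Skolem-hull proof is the standard one and is essentially correct: Skolemizing via the canonical well-order of the ordinal domain keeps everything choice-free, the hull of $I=\bigcup_{0<i<\omega}I_i$ is elementary, the lower bound $|B\cap\kappa_n|\geq\lambda_n$ comes from $I_1\cup\cdots\cup I_n\subseteq\kappa_n$, and the upper bound comes from the fact that for a term $t$, parameters $\vec a$ below $\kappa_n$, a value $\gamma<\kappa_n$, and high indiscernibles $\vec b$, goodness applied to the formula ``$t(\vec a,\vec b)=\gamma$'' (all of whose parameters lie below $\min\vec b$) shows the value is independent of which high indiscernibles realize a given pattern. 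Two small points to tighten. First, your phrase ``deleted and replaced by canonical ones from the lower levels'' is not quite what happens: the high arguments cannot be moved below $\kappa_n$; rather, for each of the countably many (term, level-pattern) pairs you fix one canonical choice of high indiscernibles, and then every element of $B\cap\kappa_n$ is of the form $F_j(\vec a)$ with $j<\omega$ and $\vec a\in[I_1\cup\cdots\cup I_n]^{<\omega}$, giving $|B\cap\kappa_n|\leq\aleph_0\cdot\lambda_n=\lambda_n$. Second, this step silently uses that the $I_i$ are \emph{jointly} good indiscernible (tuples drawn from several levels in the same pattern are interchangeable), not merely that each $I_i$ is good separately; that is exactly the content of ``coherent'' in Definition~3.13/3.14 of Dimitriou's thesis, which the present paper cites but never spells out, so you are right to flag it as the load-bearing hypothesis rather than treat it as automatic.
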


\begin{lem} 
{(\textbf{Dimitriou}; \cite[\textbf{Proposition 3.50}]{Dim2011})} {\em Let us assume that $V\models$ $\mathsf{ZFC}+``\kappa=\kappa(\lambda)$ exists", $\mathbb{P}$ is a forcing notion such that $\vert \mathbb{P}\vert<\kappa$, and $\mathbb{Q}$ is a forcing notion that doesn't add subsets to $\kappa$. If $G$ is $\mathbb{P}\times\mathbb{Q}$ generic then for every $\theta<\kappa$, $V[G]\models\kappa\rightarrow^{\theta}(\lambda)^{<\omega}_{2}$.}
\end{lem}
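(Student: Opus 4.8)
The plan is to reduce the statement to the small factor $\mathbb{P}$ alone and then run a Lévy--Solovay style argument that codes the forcing relation of $\mathbb{P}$ into an auxiliary structure in $V$ to which the Erd\H{o}s property $\kappa=\kappa(\lambda)$ applies. Throughout I use that $\kappa(\lambda)$ is inaccessible (standard for Erd\H{o}s cardinals with infinite index), so $\kappa$ is regular, a strong limit, and is preserved both by $\mathbb{P}$ (small) and by $\mathbb{Q}$ (adds no subsets of $\kappa$); I also use the in-$V$ reformulation that $\kappa\to(\lambda)^{<\omega}_2$ yields, for every $\theta<\kappa$ and every structure $\langle\kappa,\dots\rangle$ in a countable language, a set of good indiscernibles of order type $\lambda$ lying in $\kappa\setminus\theta$, i.e. $V\models\kappa\to^{\theta}(\lambda)^{<\omega}_2$. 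This follows from the partition relation by restricting colourings to $[\kappa\setminus\theta]^{<\omega}$ (which has order type $\kappa$ since $\theta<\kappa$ is regular) together with Silver's passage from homogeneous sets to good indiscernibles.

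\textbf{Step 1 (absorbing $\mathbb{Q}$).} First I would show $\mathcal{P}(\kappa)^{V[G]}=\mathcal{P}(\kappa)^{V[G_{\mathbb{P}}]}$, where $G=G_{\mathbb{P}}\times G_{\mathbb{Q}}$. Fix $Y\subseteq\kappa$ in $V[G]$; writing $V[G]=V[G_{\mathbb{Q}}][G_{\mathbb{P}}]$, note $\mathcal{P}(\kappa)^{V[G_{\mathbb{Q}}]}=\mathcal{P}(\kappa)^{V}$ since $\mathbb{Q}$ adds no subset of $\kappa$, so $V[G_{\mathbb{Q}}]$ has the same bounded subsets of $\kappa$, hence the same subsets of $\mathbb{P}$ (as $|\mathbb{P}|<\kappa$), as $V$. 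Over $V[G_{\mathbb{Q}}]$ the forcing $\mathbb{P}$ is small, so $Y$ has a nice $\mathbb{P}$-name $\{(\alpha,p):p\in A_\alpha\}$ with each antichain $A_\alpha$ a subset of $\mathbb{P}$ of size $<\kappa$; each $A_\alpha$ lies in $V$, and since $\kappa$ is inaccessible the sequence $\langle A_\alpha:\alpha<\kappa\rangle$ (a map from $\kappa$ into the fixed set $\mathcal{P}(\mathbb{P})$ of size $<\kappa$) is coded by a single subset of $\kappa$, which therefore lies in $V$. Thus the name lies in $V$ and $Y\in V[G_{\mathbb{P}}]$. Consequently every structure $\mathfrak{A}=\langle\kappa,\dots\rangle$ of $V[G]$ in a countable language, being coded by a subset of $\kappa$, already belongs to $V[G_{\mathbb{P}}]$ and so admits a $\mathbb{P}$-name.

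\textbf{Step 2 (the small factor).} Now fix $\theta<\kappa$ and such an $\mathfrak{A}\in V[G_{\mathbb{P}}]$ with $\mathbb{P}$-name $\dot{\mathfrak{A}}$. Identifying $\mathbb{P}$ with a set of ordinals below some $\delta_{\mathbb{P}}<\kappa$ and setting $\theta^\ast=\max(\theta,\delta_{\mathbb{P}})$, I would define in $V$ the structure $\mathfrak{A}^\ast=\langle\kappa,(S_\varphi)_\varphi\rangle$ in the countable language indexed by the formulas $\varphi(v_1,\dots,v_n)$ of $\mathfrak{A}$, where $S_\varphi(p,\alpha_1,\dots,\alpha_n)$ holds iff $p<\delta_{\mathbb{P}}$ codes a condition and $p\Vdash_{\mathbb{P}}\varphi(\check\alpha_1,\dots,\check\alpha_n)$ in $\dot{\mathfrak{A}}$; this is definable in $V$ from $\dot{\mathfrak{A}}$ and $\mathbb{P}$. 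Applying $V\models\kappa\to^{\theta^\ast}(\lambda)^{<\omega}_2$ to $\mathfrak{A}^\ast$ yields $I\in[\kappa\setminus\theta^\ast]^{\lambda}$ of good indiscernibles for $\mathfrak{A}^\ast$. I then claim $I$ is a set of good indiscernibles for $\mathfrak{A}$ in $V[G_{\mathbb{P}}]$: given parameters $\bar x$ below $\min I$ and any condition $p\in G_{\mathbb{P}}$ (an ordinal $<\delta_{\mathbb{P}}\le\min I$), goodness of $I$ applied to $S_\varphi$ with parameters $p,\bar x$ gives $p\Vdash\varphi(\bar x,\bar\alpha)\iff p\Vdash\varphi(\bar x,\bar\alpha')$ for increasing tuples $\bar\alpha,\bar\alpha'$ from $I$; quantifying over $p\in G_{\mathbb{P}}$ turns this into $\mathfrak{A}\models\varphi(\bar x,\bar\alpha)\iff\mathfrak{A}\models\varphi(\bar x,\bar\alpha')$. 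Since $I\subseteq\kappa\setminus\theta$ and first-order truth in the fixed structure $\mathfrak{A}$ is absolute between $V[G_{\mathbb{P}}]$ and $V[G]$, $I$ witnesses the desired good indiscernibles in $V[G]$. As $\mathfrak{A}$ and $\theta<\kappa$ were arbitrary, $V[G]\models\kappa\to^{\theta}(\lambda)^{<\omega}_2$.

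The main obstacle is Step 1: the distributive factor $\mathbb{Q}$ is not small, so one cannot code its conditions below a fixed ordinal $<\kappa$ and fold them into $\mathfrak{A}^\ast$ directly. The content is precisely that $\mathbb{Q}$ contributes no new subset of $\kappa$ even after the small forcing $\mathbb{P}$, which is what lets the given structure be captured by a $\mathbb{P}$-name; this leans on $\kappa$ being inaccessible (to code a $\kappa$-sequence of small antichains as a single subset of $\kappa$) together with $\mathbb{Q}$ adding no bounded subsets of $\kappa$. Step 2 is then the routine Lévy--Solovay-type transfer, the only delicate point being the bookkeeping that the generic condition $p$ and the parameters $\bar x$ all lie below $\min I$ so that goodness of the indiscernibles may be invoked.
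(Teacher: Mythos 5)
The paper does not prove this statement at all: it is quoted verbatim as Lemma 9.6 with the attribution ``Proposition 3.50 of \cite{Dim2011}'', so there is no in-paper proof to compare against. Your argument is correct and is the standard one for this result: Step 1 (mutual genericity plus ``no new subsets of $\kappa$'' plus inaccessibility of $\kappa(\lambda)$ to pull a nice $\mathbb{P}$-name for any subset of $\kappa$ back into $V$) and Step 2 (the L\'evy--Solovay expansion of the structure by the relations $S_\varphi(p,\vec\alpha)\iff p\Vdash\varphi(\check{\vec\alpha})$, with the conditions coded below $\theta^\ast\le\min I$ so that goodness absorbs them as parameters) together give exactly the cited proposition; the only external input you lean on, namely Silver's theorem that $\kappa\to(\lambda)^{<\omega}_2$ already yields $\kappa\to^{\theta}(\lambda)^{<\omega}_2$ in $V$ for every $\theta<\kappa$, is likewise a prerequisite in the source and is legitimate to cite.
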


\begin{lem}
{(\textbf{Dimitriou}; \cite[\textbf{Lemma 3.52}]{Dim2011})} {\em Let $\langle \kappa_{i}: i<\omega\rangle$ and $\langle \lambda_{i}: 0< i<\omega\rangle$ be two increasing sequences of cardinals such that $\langle \kappa_{i}: 0< i<\omega\rangle$ is a coherent sequence of Erd\H{o}s cardinals with respect to $\langle \lambda_{i}: 0< i<\omega\rangle$. If $\mathbb{P}_1$ is a forcing notion of cardinality $<\kappa_1$ and G is $\mathbb{P}_1$-generic, then in $V[G]$, $\langle \kappa_{i}: i<\omega\rangle$ is a coherent sequence of cardinals with the property $\kappa_{i+1}\rightarrow^{\kappa_i}(\lambda_{i+1})^{<\omega}_{2}$.}
\end{lem}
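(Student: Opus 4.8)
The plan is to reduce the whole statement to a level-by-level application of \textbf{Proposition 3.50 of \cite{Dim2011}}, exploiting the relative partition relation with threshold $\kappa_{n}$ to build the coherence in for free. First I would record the two standing facts I need: since $\langle\kappa_{i}\rangle$ is increasing and $\vert\mathbb{P}_{1}\vert<\kappa_{1}$, we have $\vert\mathbb{P}_{1}\vert<\kappa_{n+1}$ for every $n<\omega$, so $\mathbb{P}_{1}$ is small relative to each $\kappa_{n+1}$; and the ground-model hypothesis that $\langle\kappa_{i}:0<i<\omega\rangle$ is a coherent sequence of Erd\H{o}s cardinals with respect to $\langle\lambda_{i}\rangle$ supplies, in particular, that each $\kappa_{n+1}=\kappa(\lambda_{n+1})$ exists in $V$.

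Next, for each fixed $n<\omega$ I would invoke \textbf{Proposition 3.50 of \cite{Dim2011}} with $\kappa=\kappa_{n+1}$, $\lambda=\lambda_{n+1}$, $\mathbb{P}=\mathbb{P}_{1}$ (of cardinality $<\kappa_{n+1}$) and $\mathbb{Q}$ the trivial forcing, which adds no subsets of $\kappa_{n+1}$ and makes $\mathbb{P}\times\mathbb{Q}$ forcing-equivalent to $\mathbb{P}_{1}$. This yields $V[G]\models\kappa_{n+1}\rightarrow^{\theta}(\lambda_{n+1})^{<\omega}_{2}$ for every $\theta<\kappa_{n+1}$; in particular, taking $\theta=\kappa_{n}<\kappa_{n+1}$ gives $V[G]\models\kappa_{n+1}\rightarrow^{\kappa_{n}}(\lambda_{n+1})^{<\omega}_{2}$ for every $n$.

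It then remains to package these individual relations into a single $\langle\lambda_{i}\rangle$-coherent sequence of good indiscernibles for an arbitrary structure $\a{A}=\langle\kappa,\ldots\rangle\in V[G]$ with countable language, which is exactly what Definition 3.14 of \cite{Dim2011} requires. I would proceed by recursion on $n$: having fixed $I_{1},\ldots,I_{n}$ below $\kappa_{n}$, I pass to an auxiliary structure on $\kappa_{n+1}$ that codes the relevant fragment of $\a{A}$ through its Skolem functions (keeping the language countable), and apply $\kappa_{n+1}\rightarrow^{\kappa_{n}}(\lambda_{n+1})^{<\omega}_{2}$ to extract a set $I_{n+1}\in[\kappa_{n+1}\setminus\kappa_{n}]^{\lambda_{n+1}}$ of good indiscernibles. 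The superscript $\theta=\kappa_{n}$ forces $\min(I_{n+1})>\kappa_{n}\ge\sup\bigl(\bigcup_{m\le n}I_{m}\bigr)$, and the word \emph{good} means indiscernibility survives even when the earlier indiscernibles are admitted as parameters; this is precisely the coherence linking level $n+1$ to the lower levels.

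The step I expect to be the main obstacle is this last gluing: verifying that good indiscernibles obtained separately on the auxiliary structures over each $\kappa_{n+1}$ genuinely assemble into a coherent sequence of good indiscernibles for the full structure $\a{A}=\langle\kappa,\ldots\rangle$, and that the transfer from $\a{A}$ to the structures on each $\kappa_{n+1}$ neither discards nor corrupts the indiscernibility. The reason to use the relative relation $\rightarrow^{\kappa_{n}}$ rather than plain $\rightarrow$ is exactly to deliver the separation and the admissibility of lower-level parameters that make this gluing legitimate, so once the passage between the $\kappa$-structure and the $\kappa_{n+1}$-structures is arranged correctly the coherence follows. The outcome is the Definition 3.14 property in $V[G]$, which by \textbf{Corollary 3.15 of \cite{Dim2011}} is what subsequently feeds the infinitary Chang conjecture.
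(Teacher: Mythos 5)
First, a point of comparison: the paper does not actually prove this statement. It is recalled verbatim as Lemma 3.52 of \cite{Dim2011}, alongside Proposition 3.50 and Corollary 3.15, and no argument is supplied in the text. So there is no in-paper proof to measure your attempt against; what follows is an assessment of the proposal on its own terms, using only what the paper makes available.

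Your first two steps are fine: $\vert\mathbb{P}_{1}\vert<\kappa_{1}\leq\kappa_{n+1}$, so Proposition 3.50 (with $\mathbb{Q}$ trivial) gives $\kappa_{n+1}\rightarrow^{\kappa_{n}}(\lambda_{n+1})^{<\omega}_{2}$ in $V[G]$ for every $n$, granted that the hypothesis ``coherent sequence of Erd\H{o}s cardinals'' entails $\kappa_{n+1}=\kappa(\lambda_{n+1})$ in $V$ (which it does in the intended reading, but you are leaning on a definition the paper never states). The genuine gap is exactly where you flag it: the gluing. The conclusion is phrased via Definition 9.4, which demands, for each structure $\a{A}=\langle\kappa,\ldots\rangle$, a \emph{$\langle\lambda_{i}\rangle$-coherent sequence of good indiscernibles} --- a notion defined only in \cite{Dim2011} and reproduced nowhere in the paper or in your proposal. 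Everything turns on its content. If coherence only requires each $I_{i}$ separately to be a set of good indiscernibles for $\a{A}$ lying in $\kappa_{i}\setminus\kappa_{i-1}$ with order type $\lambda_{i}$, then your recursion is unnecessary: pick the $I_{n+1}$ independently, and the earlier blocks are automatically admissible as parameters because they sit below $\min(I_{n+1})$. If instead coherence demands joint indiscernibility for tuples drawn from several blocks simultaneously, then the levelwise partition relations in $V[G]$ simply do not deliver it --- at stage $n+1$ the relation $\rightarrow^{\kappa_{n}}$ lets you fix earlier indiscernibles as parameters but never lets you permute them, and no recursion repairs that. In that case the correct route is the one the hypothesis is begging for: code a $\mathbb{P}_{1}$-name for $\a{A}$ together with the (small) forcing into a single ground-model structure, apply the ground-model coherence \emph{once} to get a coherent sequence of good indiscernibles in $V$, and push it into $V[G]$ via the forcing relation as in the proof of Proposition 3.50. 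You cannot decide between these two situations without Definition 3.13 of \cite{Dim2011} in hand, so the proof is not closed.

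A smaller but real issue is your reduction of $\a{A}$ to an auxiliary structure on $\kappa_{n+1}$ ``through its Skolem functions'': Skolem functions of $\a{A}$ need not map $\kappa_{n+1}$ into $\kappa_{n+1}$, so the auxiliary structure should instead carry the traces on $\kappa_{n+1}$ of all $\a{A}$-definable relations (still a countable language); good indiscernibility for that structure then pulls back to $\a{A}$ because every tuple and every admissible parameter lies in $\kappa_{n+1}$. With that correction, and with the separate-blocks reading of coherence, your argument goes through; under the joint reading it does not, and you would need to switch to the name-and-forcing-relation argument.
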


\subsection{The Proofs of Theorems 1.9 and 1.10} In this subsection, we prove \textbf{Theorem 1.9} and \textbf{Theorem 1.10}. 
\begin{proof}{\textbf{(of Theorem 1.9)}}
\begin{enumerate}
\item \textbf{The ground model ($V$).}
Let $\kappa$ be a measurable cardinal in a model $V'$ of $\mathsf{ZFC}$. By Prikry forcing it is possible to make $\kappa$ singular with cofinality $\omega$ where an end segment $\langle \kappa_{i}: 1\leq i<\omega\rangle$ of the Prikry sequence $\langle \delta_{i}: 1\leq i <\omega\rangle$ is a {\em coherent} sequence of Ramsey cardinals by \cite[\textbf{Theorem 3}]{AK2006}. Now Ramsey cardinals $\kappa_{i}$ are exactly the $\kappa_{i}$-Erd\H{o}s cardinals. Thus we obtain a generic extension (say $V$) where $\langle\kappa_i: 1\leq i<\omega\rangle$ is a coherent sequence of cardinals with supremum $\kappa$ such that for all $1\leq i<\omega$, $\kappa_i=\kappa(\kappa_i)$. We define the following cardinals.

\begin{enumerate}
    \item $\kappa'_{0}=\omega$ and $\kappa_{0}=\aleph_{\omega}$.
    \item $\kappa'_{1}=\aleph_{\omega+1}$.
    \item $\kappa'_{i}=\kappa_{i-1}^{+\omega_{i-1}+1}$for each $1< i<\omega$.
\end{enumerate}

\item \textbf{Defining the triple $\langle\mathbb{P},\mathcal{G},\mathcal{I}\rangle$.} 
We consider a triple similar to the one constructed in \textbf{section 5}.

\begin{itemize}
\item Let
$\mathbb{P} =\prod_{i<\omega}\mathbb{P}_{i}$ be the Easton support product\footnote{In this case, it is equivalent to full
support.} of $\mathbb{P}_{i}=Fn(\kappa_{i}', \kappa_{i},\kappa_{i}')$ ordered componentwise where for each $i<\omega$, $Fn(\kappa_{i}', \kappa_{i},\kappa_{i}')$= $\{p:\kappa_{i}'\rightharpoonup\kappa_{i}:\vert p\vert<\kappa_{i}'$ and $p$ is an injection$\}$ ordered by reverse inclusion. 

\item
$\mathcal{G}=
\prod_{i<\omega} \mathcal{G}_{i}$ where for each $i<\omega$, $\mathcal{G}_{i}$ is the full permutation group of $\kappa_{i}$ that can be extended to $\mathbb{P}_{i}$ by permuting the range of its conditions, i.e., for all $a\in \mathcal{G}_{i}$ and $p\in \mathbb{P}_{i}$, $a(p)=\{(\psi,a(\beta)):(\psi,\beta)\in p\}$.

\item 
For $m\in\omega$ and $e=\{\alpha_1,...,\alpha_m\}$ a sequence of ordinals such that for each  $1\leq i\leq m$, there is a distinct $\epsilon_{i}<\omega$ such that $\alpha_{i}\in (\kappa'_{\epsilon_i},\kappa_{\epsilon_i})$, 
we define $E_{e}=\{\langle \emptyset,...,p_{\epsilon_1}\cap (\kappa'_{\epsilon_1}\times \alpha_1),\emptyset,...,p_{\epsilon_2}\cap (\kappa'_{\epsilon_2}\times\alpha_2),\emptyset,...,p_{\epsilon_m}\cap (\kappa'_{\epsilon_m}\times\alpha_m),\emptyset,...\rangle; \overrightarrow{p}\in \mathbb{P} \}$ and $\mathcal{I}=\{E_e : e\in \prod^{fin}_{i<\omega}(\kappa'_{i}, \kappa_{i})\}$.

\end{itemize}
\item \textbf{Defining symmetric extension of $V$.} 
Let $\mathcal{I}$ generate a normal filter $\mathcal{F}_{\mathcal{I}}$ over $\mathcal G$.
Let $G$ be a $\mathbb{P}$-generic filter. We consider the symmetric model $V(G)^{\mathcal{F}_I}$. We denote $V(G)^{\mathcal{F}_I}$ by $V(G)$ for the sake of convenience.
\end{enumerate}

Since the forcing notions involved are weakly homogeneous, the following holds. 

\begin{lem}
{\em If $A\in V(G)$ is a set of ordinals, then $A\in V[G\cap E_{e}]$ for some $E_{e}\in\mathcal{I}$.}
\end{lem}

Following the arguments in \cite[\textbf{Lemma 1.35}]{Dim2011}, we can see that in $V(G)$, $(\kappa'_{i})^{+}=\kappa_{i}$ for every $i<\omega$.
Similar to the arguments from the proof of \cite[\textbf{Theorem 11}]{AK2006}, it is possible to see that in $V(G)$, $\kappa=\aleph_{(\aleph_{\omega})^{V}}$ and $(\aleph_{\omega})^{V}=\aleph_{1}$. Consequently $\kappa=\aleph_{\omega_{1}}$ and $cf(\kappa)=\omega$ in $V(G)$. Further $\omega_{1}$ is singular in $V(G)$. Following \textbf{Fact 2.19}, $\mathsf{AC_{\omega}}$ fails in $V(G)$. We prove that an infinitary Chang conjecture holds in $V(G)$.

\begin{lem}
{\em In $V(G)$, an infinitary Chang conjecture holds.}
\end{lem}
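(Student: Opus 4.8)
The plan is to verify in $V(G)$ the hypothesis of the ZF-statement \textbf{Corollary 3.15 of \cite{Dim2011}}, namely that $\langle \kappa_{i}: 1\leq i<\omega\rangle$ is a coherent sequence of cardinals with the property $\kappa_{i+1}\rightarrow^{\kappa_i}(\lambda_{i+1})^{<\omega}_{2}$ for a suitable target sequence $\langle\lambda_i\rangle$, and then read off the infinitary Chang conjecture $(\kappa_n)_{n\in\omega}\twoheadrightarrow(\lambda_n)_{n\in\omega}$ directly from that corollary. By \textbf{Definition 3.14 of \cite{Dim2011}} this amounts to showing that for every structure $\a{A}=\langle\kappa,\dots\rangle$ with a countable language lying in $V(G)$ there is a $\langle\lambda_i\rangle$-coherent sequence of good indiscernibles for $\a{A}$.

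First I would fix such an $\a{A}\in V(G)$ and code it as a set of ordinals. By \textbf{Lemma 9.8} there is an $E_{e}\in\mathcal{I}$ with $\a{A}\in V[G\cap E_{e}]$, and $V[G\cap E_{e}]$ is a model of ZFC. The crucial observation is that being a set of good indiscernibles for $\a{A}$ is absolute between $V[G\cap E_{e}]$ and $V(G)$: the defining condition only quantifies over the elements of the indiscernible set and over satisfaction in the fixed structure $\a{A}$, and $V[G\cap E_{e}]\subseteq V(G)$. Hence it suffices to produce the coherent sequence of good indiscernibles inside $V[G\cap E_{e}]$.

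Next I would analyse the forcing producing $V[G\cap E_{e}]$, which is the bounded product obtained from the projections, and factor it level by level. Fixing $i$, I would split it as $\mathbb{Q}^{<}_{i+1}\times\mathbb{Q}^{\geq}_{i+1}$, where $\mathbb{Q}^{<}_{i+1}$ is the product of the finitely many coordinates $\leq i+1$ and $\mathbb{Q}^{\geq}_{i+1}$ is the full-support product of the coordinates $\geq i+2$. Since $\kappa_{i+1}$ is inaccessible and each of the finitely many lower coordinates collapses an interval of length $<\kappa_{i+1}$, we get $\vert\mathbb{Q}^{<}_{i+1}\vert<\kappa_{i+1}$; and since coordinate $i+2$ is $\kappa_{i+1}^{+}$-closed, so is the whole tail $\mathbb{Q}^{\geq}_{i+1}$, whence it adds no subset of $\kappa_{i+1}$. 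As the $\kappa_{i+1}$ form a coherent sequence of Erd\H{o}s cardinals with $\kappa_{i+1}=\kappa(\lambda_{i+1})$ in $V$, \textbf{Proposition 3.50 of \cite{Dim2011}} applies and yields $\kappa_{i+1}\rightarrow^{\theta}(\lambda_{i+1})^{<\omega}_{2}$ in $V[G\cap E_{e}]$ for every $\theta<\kappa_{i+1}$, in particular for $\theta=\kappa_{i}$.

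Finally, I would assemble these level-by-level partition properties into a single coherent sequence of good indiscernibles exactly as in \textbf{Lemma 3.52 of \cite{Dim2011}}: applying $\kappa_{i+1}\rightarrow^{\kappa_i}(\lambda_{i+1})^{<\omega}_{2}$ to $\a{A}$ produces $I_{i+1}\in[\kappa_{i+1}\setminus\kappa_{i}]^{\lambda_{i+1}}$ consisting of good indiscernibles for $\a{A}$ that tolerate parameters below $\kappa_{i}$; since $\bigcup_{j\leq i}I_{j}\subseteq\kappa_{i}$, the sequence $\langle I_{i}\rangle$ is automatically coherent. Transporting this sequence up to $V(G)$ by the absoluteness noted above and invoking \textbf{Corollary 3.15 of \cite{Dim2011}} completes the argument. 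The main obstacle is the per-level factorization in the third step: one must check carefully that the bounded collapses below $\kappa_{i+1}$ really multiply out to size $<\kappa_{i+1}$ and that the tail is genuinely $\kappa_{i+1}$-closed under the full support, so that both hypotheses of \textbf{Proposition 3.50} are met simultaneously for every $i$; the coherence in the last step and the absoluteness transfer are then comparatively routine.
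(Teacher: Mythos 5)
Your proposal is correct and follows essentially the same route as the paper: pass to an intermediate ZFC model $V[G\cap E_{e}]$ via the approximation lemma, factor the forcing at each level into a small part and a sufficiently closed tail so that Proposition 3.50 of \cite{Dim2011} gives $\kappa_{i+1}\rightarrow^{\kappa_{i}}(\kappa_{i+1})^{<\omega}_{2}$ there, extract the coherent sequence of good indiscernibles, and transfer it up to $V(G)$ to apply the ZF Corollary 3.15. The only cosmetic difference is that the paper explicitly codes satisfaction in $\a{A}$ by a partition $f:[\kappa]^{<\omega}\rightarrow 2$ before invoking Lemma 9.8, whereas you code $\a{A}$ directly as a set of ordinals; the substance is identical.
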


\begin{proof}
Let $\mathcal{A}=\langle \kappa,...\rangle$ be a structure in a countable language in $V(G)$. 
Let $\{\phi_{n}: n<\omega\}$ be an enumeration of the formulas of the language of ${\mathcal{A}}$ such that each $\phi_{n}$ has $k(n)\leq n$ many free variables. Define $f:[\kappa]^{<\omega}\rightarrow 2$ by,

\centerline{$f(\epsilon_{1},...,\epsilon_{n})=1$ if and only if $\mathcal{A}\models\phi_{n}(\epsilon_{1},...,\epsilon_{k(n)})$ and $f(\epsilon_{1},...,\epsilon_{n})=0$ otherwise.}

By \textbf{Lemma 7.8}, there is an $E_{e}\in \mathcal{I}$ such that $f\in V[G\cap E_{e}]$. Fix an arbitrary $1\leq i< \omega$.
We can write $V[G\cap E_{e}]$=$V[G_1][G_2]$ where $G_1$ is $\mathbb{Q}_1$-generic over $V$ such that $\vert\mathbb{Q}_1\vert<\kappa_{i}$, and $G_2$ is $\mathbb{Q}_2$-generic over $V[G_1]$ such that $G_2$ adds no subsets of $\kappa_{i}$. Consequently, by \textbf{Lemma 7.6}, $\kappa_{i}\rightarrow^{\kappa_{i-1}}(\kappa_{i})^{<\omega}_{2}$ in $V[G\cap E_{e}]$. 
So, for all $1\leq i<\omega$, $\kappa_{i}\rightarrow^{\kappa_{i-1}}(\kappa_{i})^{<\omega}_{2}$ in $V[G\cap E_{e}]$. 

Let $e=\{\alpha_{1},...,\alpha_{m}\}$ where for each $i\in \{1,...,m\}$, there is a dictinct $\epsilon_{i}$ such that $\alpha_{i}\in (\kappa'_{\epsilon_{i}},\kappa_{\epsilon_{i}})$. Consider $j$ to be $max\{\epsilon_{i}:\alpha_{i}\in e\}$. If $G\cap E_{e}$ is $\mathbb{P}$-generic over $V$ then since $\vert \mathbb{P} \vert<\kappa_{j}$, 
by \textbf{Lemma 7.7}, $\langle \kappa_{i}:j-1\leq i<\omega\rangle$ is a coherent sequence of cardinals with the property $\kappa_{i}\rightarrow^{\kappa_{i-1}}(\kappa_{i})^{<\omega}_{2}$ for all $j\leq i<\omega$.
By \textbf{Definition 7.4}, there is a $\langle\kappa_{i}:j\leq i<\omega\rangle$-coherent sequence $\langle A_{n}:j\leq n<\omega\rangle$ of good indiscernibles for $\mathcal{A}$ with respect to $\langle\kappa_{i}:j-1\leq i<\omega\rangle$. 
We obtain a $\langle\kappa_{i}:j-1\leq i<\omega\rangle$-coherent sequence $\langle A_{n}:j-1\leq n<\omega\rangle$ of good indiscernibles for $\mathcal{A}$ with respect to $\langle\kappa_{i}:j-2\leq i<\omega\rangle$ as follows. 

\begin{itemize}
\item Since $\kappa_{j-1}\rightarrow^{\kappa_{j-2}}(\kappa_{j-1})^{<\omega}_{2}$, we obtain a set $A_{j-1}\in [\kappa_{j-1}\backslash \kappa_{j-2}]^{\kappa_{j-1}}$ of indiscernibles for $\mathcal{A}$ with respect to parameters below $\kappa_{j-2}$. Consequently, we obtain a $\langle\kappa_{i}:j-1\leq i<\omega\rangle$-coherent sequence $\langle A_{n}:j-1\leq n<\omega\rangle$ of good indiscernibles for $\mathcal{A}$ with respect to $\langle\kappa_{i}:j-2\leq i<\omega\rangle$.
\end{itemize}

If we continue in this manner step by step for the remaining cardinals $\kappa_{1},...\kappa_{j-2}$, then since $\kappa_{i}\rightarrow^{\kappa_{i-1}}(\kappa_{i})^{<\omega}_{2}$ for each $1\leq i\leq j-2$, we can obtain a $\langle\kappa_{i}:0 < i<\omega\rangle$-coherent sequence $A=\langle A_{n}:0< n<\omega\rangle$ of good indiscernibles for $\mathcal{A}$ with respect to $\langle\kappa_{i}: i<\omega\rangle$ and $A\in V[G\cap E_{e}]\subseteq V(G)$. 
Therefore for all $1\leq i<\omega$, $\kappa_{i}\rightarrow^{\kappa_{i-1}}(\kappa_{i})^{<\omega}_{2}$ and $\langle\kappa_{i}:1\leq i<\omega\rangle$ is a coherent sequence of cardinals in $V(G)$ by \textbf{Definition 7.4}. Using \textbf{Lemma 7.5}, we can obtain an infinitary Chang conjecture in $V(G)$ as \textbf{Lemma 7.5} can be proved in $\mathsf{ZF}$.
\end{proof}
\end{proof}

\begin{proof}{\textbf{(of Theorem 1.10)}}
Let $\mathcal{N}$ be the choiceless model constructed in \cite[\textbf{Theorem 11}]{AK2006}.
We first translate the arguments in terms of a symmetric extension based on a symmetric system $\langle \mathbb{P}, \mathcal{G}, \mathcal{F}\rangle$.

\begin{itemize}
    \item Consider $V$, $\mathbb{P}$, and $\mathcal{G}$ as mentioned in the previous construction (used for proving \textbf{Theorem 1.9}). 
    \item Let $\mathcal{I}=\{E_e : e\in \prod_{i< \omega}(\kappa_{i}', \kappa_{i})\}$ where for every $e=\{\alpha_i: i< \omega\}\in \prod_{i\in \omega}(\kappa_{i}', \kappa_{i})$, 
$E_{e}=\{\langle p_{i}\cap (\kappa_{i}'\times \alpha_{i}): i<\omega\rangle: \overrightarrow{p}\in \mathbb{P}\}$.
Let $\mathcal{I}$ generate a normal filter $\mathcal{F}_{\mathcal{I}}$ over $\mathcal G$. We define $\mathcal{F}$ to be $\mathcal{F}_{\mathcal{I}}$.
\end{itemize}

Let $G$ be a $\mathbb{P}$-generic filter. We consider the symmetric model $V(G)^{\mathcal{F}}$. We denote $V(G)^{\mathcal{F}}$ by $V(G)$ for the sake of convenience. The model $V(G)$ is analogous to the choiceless model $\mathcal{N}$ constructed in \cite[\textbf{Theorem 11}]{AK2006}. 
Since the forcing notions involved are weakly homogeneous, the following holds.

\begin{lem}
{\em If $A\in V(G)$ is a set of ordinals, then $A\in V[G\cap E_{e}]$ for some $E_{e}\in\mathcal{I}$.}
\end{lem}

Similar to \textbf{Lemma 7.9}, we observe an infinitary Chang conjecture in $V(G)$. 

\begin{lem}
{\em In $V(G)$, an infinitary Chang conjecture holds .}
\end{lem}

\begin{proof}
Let $\mathcal{A}=\langle \kappa,...\rangle$ be a structure in a countable language in $V(G)$. 
Let $\{\phi_{n}: n<\omega\}$ be an enumeration of the formulas of the language of $\mathcal{A}$ such that each $\phi_{n}$ has $k(n)\leq n$ many free variables. Define $f:[\kappa]^{<\omega}\rightarrow 2$ by,

\centerline{$f(\epsilon_{1},...,\epsilon_{n})=1$ if and only if $\mathcal{A}\models\phi_{n}(\epsilon_{1},...,\epsilon_{k(n)})$ and $f(\epsilon_{1},...,\epsilon_{n})=0$ otherwise.}

By \textbf{Lemma 7.10}, there is an $E_{e}\in \mathcal{I}$ such that $f\in V[G\cap E_{e}]$. Fix an arbitrary $1\leq i< \omega$.
We can write $V[G\cap E_{e}]$=$V[G_1][G_2]$ where $G_1$ is $\mathbb{Q}_1$-generic over $V$ such that $\vert\mathbb{Q}_1\vert<\kappa_{i}$, and $G_2$ is $\mathbb{Q}_2$-generic over $V[G_1]$ such that $G_2$ adds no subsets of $\kappa_{i}$. Consequently, by \textbf{Lemma 7.6}, $\kappa_{i}\rightarrow^{\kappa_{i-1}}(\kappa_{i})^{<\omega}_{2}$ in $V[G\cap E_{e}]$. 
So, for all $1\leq i<\omega$, $\kappa_{i}\rightarrow^{\kappa_{i-1}}(\kappa_{i})^{<\omega}_{2}$ in $V[G\cap E_{e}]$. 
Thus, we obtain a set $A_{i}\in [\kappa_{i}\backslash \kappa_{i-1}]^{\kappa_{i}}$ of good indiscernibles for $\mathcal{A}$ for each $1\leq i<\omega$, in $V[G\cap E_{e}]$. Consequently, we obtain a $\langle\kappa_{i}:0 < i<\omega\rangle$-coherent sequence $A=\langle A_{i}:0<i<\omega\rangle$ of good indiscernibles for $\mathcal{A}$ with respect to $\langle\kappa_{i}: i<\omega\rangle$ and $A\in V[G\cap E_{e}]\subseteq V(G)$. The rest is the same as in the proof of \textbf{Lemma 7.9}.
\end{proof}

Applying \cite[\textbf{Theorem 4}]{AK2006} and \cite[\textbf{Proposition 1}]{AK2008}, we prove that $\aleph_{\omega_{1}}$ is an almost Ramsey cardinal in  $V(G)$.

\begin{lem}
{\em In  $V(G)$, $\aleph_{\omega_{1}}$ is an almost Ramsey cardinal.}
\end{lem}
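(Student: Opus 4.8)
The plan is to adapt essentially verbatim the argument of \textbf{Lemma 7.3}, where $\aleph_{\omega}$ was shown to be almost Ramsey. First I would observe that in $V(G)$ the cardinal $\kappa=\sup_{i<\omega}\kappa_{i}$ is exactly $\aleph_{\omega_{1}}$, so it suffices to prove that $\kappa$ is almost Ramsey. Fix an arbitrary $f:[\kappa]^{<\omega}\rightarrow 2$ in $V(G)$. Since $f$ is coded by a set of ordinals, \textbf{Lemma 9.8} yields some $E_{e}\in\mathcal{I}$ with $f\in V[G\cap E_{e}]$, and the whole computation can then be carried out inside the intermediate $AC$-model $V[G\cap E_{e}]$.

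The key step is to show that every $\kappa_{i}$ remains Ramsey in $V[G\cap E_{e}]$. For a fixed $1\leq i<\omega$ I would factor $V[G\cap E_{e}]=V[G_{1}][G_{2}]$, where $G_{1}$ is generic for a partial order $\mathbb{P}_{1}$ of size $<\kappa_{i}$ (the collapses acting strictly below $\kappa_{i}$, restricted by $E_{e}$) and $G_{2}$ is generic for a $\kappa_{i}$-closed partial order $\mathbb{P}_{2}$ (the tail collapses $\mathbb{P}_{j}$ for $j\geq i$, which are $\kappa_{j-1}^{+\aleph_{j-1}+1}$-closed and hence $\kappa_{i}$-closed). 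Since $\kappa_{i}$ is Ramsey in $V$, \textbf{Theorem 3.2} preserves its Ramseyness into $V[G_{1}]$ through the small forcing, and since $\mathbb{P}_{2}$ adds no new subsets of $\kappa_{i}$, the Ramseyness survives into $V[G_{1}][G_{2}]=V[G\cap E_{e}]$.

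With each $\kappa_{i}$ Ramsey in $V[G\cap E_{e}]$, the cardinal $\kappa$ is a limit of Ramsey cardinals there, hence almost Ramsey in $V[G\cap E_{e}]$ by \textbf{Proposition 1 of \cite{Apt2008c}} (which holds in ZF). Consequently, for each $\beta<\kappa$ there is a set $X_{\beta}\in V[G\cap E_{e}]$ homogeneous for $f$ of order type at least $\beta$; as $V[G\cap E_{e}]\subseteq V(G)$, we have $X_{\beta}\in V(G)$. Since $f$ was arbitrary, $\kappa=\aleph_{\omega_{1}}$ is almost Ramsey in $V(G)$.

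I expect the main obstacle to be the bookkeeping in the factorization $V[G\cap E_{e}]=V[G_{1}][G_{2}]$: one must check that, relative to each $\kappa_{i}$, the portion of the restricted product $G\cap E_{e}$ lying below $\kappa_{i}$ genuinely has size $<\kappa_{i}$ while the tail is $\kappa_{i}$-closed, so that both the Levy--Solovay preservation and the no-new-subsets argument apply. Everything else is a routine transcription of \textbf{Lemma 7.3}.
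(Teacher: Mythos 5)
Your argument is correct and reaches the conclusion by the same overall route as the paper: identify $\kappa=\aleph_{\omega_{1}}$ in $V(G)$, capture $f$ in an intermediate model $V[G\cap E_{e}]$ via Lemma 9.8, show $\kappa$ is a limit of Ramsey cardinals there, apply Proposition 1 of \cite{Apt2008c}, and pull the homogeneous sets back into $V(G)$. The one place you genuinely diverge is the preservation step: you re-run the Lemma 7.2-style factorization $V[G\cap E_{e}]=V[G_{1}][G_{2}]$ with $\vert\mathbb{P}_{1}\vert<\kappa_{i}$ and a highly closed tail, invoking Theorem 3.2 to carry Ramseyness of each $\kappa_{i}$ (note that here the $\kappa_{i}$ are only Ramsey in $V$, not measurable, so you correctly use the Ramsey clause of Theorem 3.2 rather than the measurable-then-Ramsey route of Lemma 7.2). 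The paper instead cites Theorem 4 of \cite{AK2006} to conclude that $\langle\kappa_{i}:i<\omega\rangle$ remains a coherent sequence of Ramsey cardinals in $V[G\cap E_{e}]$, which gives more than is needed for this lemma (the coherence is what Lemma 9.9 requires, and citing it here lets the paper reuse one preservation fact for both lemmas). Your hands-on version is self-contained and buys independence from \cite{AK2006}; the paper's version is shorter and uniform across the section. Two small points to tighten: for $j=i$ the restricted coordinate is a collapse up to $\alpha_{i}<\kappa_{i}$, and inaccessibility of $\kappa_{i}$ is what makes the finite product of the first $i+1$ restricted coordinates have size $<\kappa_{i}$; and "adds no new subsets of $\kappa_{i}$" needs the tail to be $\kappa_{i}^{+}$-distributive rather than merely $\kappa_{i}$-closed in the paper's terminology --- this holds here since the tail is $\kappa_{i}^{+\aleph_{i}+1}$-closed and $\mathbb{P}_{1}$ is small (the paper's own Lemma 7.2 elides the same point).
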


\begin{proof}
Following the terminologies from the proof of \cite[\textbf{Theorem 11}]{AK2006} we have $\kappa=\aleph_{\omega_{1}}$ in $V(G)$. We show that $\kappa$ is an almost Ramsey cardinal in $V(G)$.
Let $f:[\kappa]^{<\omega}\rightarrow 2$ be in $V(G)$. Since $f$ can be coded by a set of ordinals, $f\in V[G\cap E_{e}]$ for some $E_{e}\in\mathcal{I}$ by \textbf{Lemma 7.10}. Now, in $V$, $\kappa$ is the supremum of a coherent sequence of Ramsey cardinals $\langle\kappa_{i}: i<\omega\rangle$.
By \cite[\textbf{Theorem 4}]{AK2006} we can see that $\langle\kappa_{i}: i<\omega\rangle$ stays a coherent sequence of Ramsey cardinals in $V[G\cap E_{e}]$. 
Also $\kappa$ is the supremum of $\langle\kappa_{i}:i<\omega\rangle$ in $V[G\cap E_{e}]$.
Thus $\kappa$ is an almost Ramsey cardinal in $V[G\cap E_{e}]$ by \cite[\textbf{Proposition 1}]{AK2008}.
Thus for all $\beta<\kappa$, there is a set $X_{\beta}\in V[G\cap E_{e}]\subseteq V(G)$ which is homogeneous for $f$ and has order type at least $\beta$. Hence, $\kappa$ is almost Ramsey in $V(G)$ since $f$ was arbitrary.
\end{proof}
\end{proof}

\section{Mutual stationarity property from a sequence of measurable cardinals}
\subsection{Mutual stationarity}We recall the idea of {\em mutual stationarity} introduced by Foreman and Magidor in \cite{FM2001} and a theorem due to Cummings, Foreman, and Magidor from \cite{CFM2006}.

\begin{defn} {\textbf{(cf. \cite[\textbf{Definition 6}]{FM2001} $\&$  \cite[\textbf{Definition 1.1}]{Apt2005})}
Let $\mathcal{K}$ be a set of regular cardinals with supremum $\lambda$. Suppose that $S_{\kappa}\subseteq \kappa$ for each $\kappa\in\mathcal{K}$. Then $\langle S_\kappa : \kappa\in \mathcal{K}\rangle$ is {\em mutually stationary} if and only if for all algebras $\mathcal{A}$ on $\lambda$, there is an elementary substructure $\mathcal{B}\prec\mathcal{A}$ such that for all $\kappa\in \mathcal{B}\cap \mathcal{K}$, sup($\mathcal{B}\cap\kappa)\in S_{\kappa}$.}
\end{defn}

\begin{thm} {\textbf{(Cummings, Foreman,  and Magidor; cf. \cite[\textbf{Theorem 5.2}]{CFM2006})}}
{\em Let $\langle\kappa_{i}: i< \delta\rangle$ be an increasing sequence of measurable cardinals, where $\delta<\kappa_{0}$ is a regular cardinal. Let $S_{i}\subseteq \kappa_{i}$ be stationary for each $i<\delta$. Then $\langle S_i: i<\delta\rangle$ is mutually stationary.}
\end{thm}

\subsection{The Proof of Observation 1.11}
We recall the choiceless model constructed in  \cite[\textbf{Theorem 1}]{Apt1983a} and the terminologies from \cite{Apt1983a}. In particular we fix an arbitrary $n_{0}\in \omega$ and assume an increasing sequence of measurable cardinals $\langle \chi_{k}:k<\omega\rangle$ in the ground model $V$ of $\mathsf{ZFC}$. Then we consider the choiceless model constructed in \cite[\textbf{Theorem 1}]{Apt1983a}. For the sake of convenience we call the model $\mathcal{N}_{n_{0}}$ and recall the relevant lemmas from \cite{Apt1983a}.

\begin{lem}{(cf. \cite[\textbf{Lemma 1.1}]{Apt1983a})}
{\em If $X\in \mathcal{N}_{n_{0}}$ is a set of ordinals, then $X\in V[G\restriction f]$ for some $f\in K$.}
\end{lem}

\begin{lem}{(cf. \cite[\textbf{Lemma 1.2}]{Apt1983a})}
{\em Let $\lambda=\bigcup_{k<\omega}\chi_{k}$. Then $\lambda=\aleph_{\omega}$ in $\mathcal{N}_{n_{0}}$
}
\end{lem}

\begin{proof}{\textbf{(of Observation 1.11)}} We note that in $\mathcal{N}_{n_{0}}$,  $\chi_{k}=\aleph_{n_{0}+2(k+1)}$ for each $ k<\omega$. Let $\lambda=\bigcup_{k<\omega}\chi_{k}$ in $V$.
\begin{enumerate}
    \item Following the arguments in \cite[\textbf{Lemma 1.36}]{Dim2011}, $\aleph_{n_{0}+2(k+1)}$ is a measurable cardinal in $\mathcal{N}_{n_{0}}$, for each $1\leq k<\omega$. Following \cite[\textbf{Theorem 4.3}]{HK2019}, for each $1\leq k<\omega$, there are no uniform ultrafilters on $\aleph_{n_{0}+2k+1}$ in $\mathcal{N}_{n_{0}}$. Consequently for each $1\leq k<\omega$, $\aleph_{n_{0}+2k+1}$ can not be a measurable cardinal in $\mathcal{N}_{n_{0}}$.
    
\item We prove that in the model $\mathcal{N}_{n_{0}}$, if $\langle S_{k}: 1\leq k <\omega\rangle$ is a sequence of stationary sets such that $S_{k}\subseteq\chi_{k}$ for every $1\leq k <\omega$, then $\langle S_{k}: 1\leq k <\omega\rangle$ is mutually stationary. By \textbf{Lemma 8.4}, $\lambda=\aleph_{\omega}$ in $\mathcal{N}_{n_{0}}$.

Suppose $\mathcal{N}_{n_{0}}\models ``\mathcal{A}$ is an algebra on $\lambda$ and $\langle S_{k}: 1\leq k <\omega\rangle$ is a sequence of stationary sets such that $S_{k}\subseteq\chi_{k}$ for every $1\leq k <\omega$''.
Since both $\mathcal{A}$ and $\langle S_{k}: 1\leq k <\omega\rangle$ can be coded by set of ordinals, by \textbf{Lemma 8.3}, there exists some $f\in K$ for which both $\langle S_{k}: 1\leq k <\omega\rangle \in V[G\restriction f]$ and $\mathcal{A}\in V[G\restriction f]$. 

Following \cite[\textbf{Lemma 1.3}]{Apt1983a}, $\chi_k$ remains measurable in $V[G\restriction f]$ for every $1\leq k <\omega$. We can observe that $S_{k}$ is a stationary subset of $\chi_{k}$ in $V[G\restriction f]$ for every $1\leq k <\omega$. Fix any $1\leq k <\omega$. Let $C$ be any club subset of $\chi_{k}$ in $V[G\restriction f]$. Since the notion of club subset of $\chi_{k}$ is upward absolute and $V[G\restriction f]\subseteq \mathcal{N}_{n_{0}}$, $C$ is also a club subset of $\chi_{k}$ in $\mathcal{N}_{n_{0}}$. Since in $\mathcal{N}_{n_{0}}, S_{k}$ is a stationary subset of $\chi_{k}$ we have $S_{k}\cap C\not= \emptyset$. Thus, $S_{k}$ is a stationary subset of $\chi_{k}$ in $V[G\restriction f]$ for every $1\leq k <\omega$. By \textbf{Theorem 8.2}, $\langle S_{k}: 1\leq k <\omega\rangle$ is mutually stationary in $V[G\restriction f]$.

We note that $\mathcal{A}$ is an algebra on $\lambda$ in $V[G\restriction f]$.
Thus there is an elementary substructure $\mathcal{B}\prec\mathcal{A}$ in $V[G\restriction f]$ such that for all $k<\omega, sup(\mathcal{B}\cap\chi_{k})\in S_{k}$ by \textbf{Definition 8.1}. So there is an elementary substructure $\mathcal{B}\prec\mathcal{A}$ in $\mathcal{N}_{n_{0}}$ such that for all $k<\omega, sup(\mathcal{B}\cap\chi_{k})\in S_{k}$. Hence in $\mathcal{N}_{n_{0}}$, $\langle S_{k}: 1\leq k <\omega\rangle$ is mutually stationary.  

\item  We recall that $\lambda=\aleph_{\omega}$ in $\mathcal{N}_{n_{0}}$. 
We can see that $\lambda$ is an almost Ramsey cardinal in $\mathcal{N}_{n_{0}}$ by a well-known argument from \cite[\textbf{Lemma 2.5}]{ADK2016}. For reader's convenience, we provide a sketch of the proof.
Let $f:[\lambda]^{<\omega}\rightarrow 2$ be in $\mathcal{N}_{n_{0}}$. Since $f$ can be coded by a set of ordinals, $f\in V[G\restriction f]$ for some $f\in K$ by \textbf{Lemma 8.3}. Following \cite[\textbf{Lemma 1.3}]{Apt1983a}, $\chi_k$ remains measurable in $V[G\restriction f]$ for every $1\leq k <\omega$. Consequently, $\chi_{k}$ is Ramsey in $V[G\restriction f]$ for every $1\leq k <\omega$. Now, in $V[G\restriction f]$, $\lambda$ is the supremum of Ramsey cardinals $\langle\chi_{i}: 1\leq i<\omega\rangle$.
Thus $\lambda$ is an almost Ramsey cardinal in $V[G\restriction f]$ by \cite[\textbf{Proposition 1}]{AK2008}.
Thus for all $\beta<\lambda$, there is a set $X_{\beta}\in V[G\restriction f]\subseteq \mathcal{N}_{n_{0}}$ which is homogeneous for $f$ and has order type at least $\beta$. Hence, $\lambda$ is almost Ramsey in $\mathcal{N}_{n_{0}}$ since $f$ was arbitrary.
\end{enumerate}
\end{proof}

\section{Acknowledgements}
The author would like to thank the reviewer for reading the manuscript in details and providing several suggestions for improvement.
The author would like to thank Arthur Apter for communicating the homogeneous property of the strongly compact Prikry forcing mentioned in \cite[\textbf{Lemma 2.1}]{AH1991} as well as
for the conversations concerning \textbf{Remark 3.3} and \textbf{Remark 3.13}. 
The author would like to thank Asaf Karagila for helping to translate the arguments of Apter from \cite[\textbf{Theorem 1}]{Apt2001} in terms of symmetric extension by a symmetric system $\langle \mathbb{P},\mathcal{G},\mathcal{F}\rangle$. We constructed a  similar symmetric extension to prove \cite[\textbf{Theorem 4.1}]{Kar2019}.

\textbf{Statements and Declarations:}

\textbf{Funding}: 

No funding was received for conducting this study.

\textbf{Conflict of interest}: 

We declare no conflicts of interest.

\end{document}